\newcommand{\trdeg}{\operatorname{tr.deg}}
\newcommand{\bpr}{\begin{proof}}
\newcommand{\epr}{\end{proof}}
\newcommand{\spec}{\operatorname{Spec}}
\newcommand{\mc}{\mathcal}
\newcommand{\mf}{\mathfrak}
\newcommand{\cha}{\operatorname{char}}
\newcommand{\mb}{\mathbb}
\newcommand{\GK}{\operatorname{GKdim}}
\newcommand{\GKdim}{\operatorname{GKdim}}
\newcommand{\wt}{\widetilde}
\newcommand{\lGr}{\operatorname{-Gr}}
\newcommand{\rGr}{\operatorname{Gr-}\hskip -2pt}
\newcommand{\wh}{\widehat}
\newcommand{\beq}{\begin{equation}}
\newcommand{\eeq}{\end{equation}}
\newcommand{\Hom}{{\rm Hom}}
\newcommand{\End}{{\rm End}}
 \DeclareMathOperator{\Pic}{Pic}
\numberwithin{equation}{section}
 \theoremstyle{plain}
\newtheorem{theorem}[equation]{Theorem}
\newtheorem{lemma}[equation]{Lemma}
\newtheorem{proposition}[equation]{Proposition}
\theoremstyle{definition}
\newtheorem{question}[equation]{Question}
\newtheorem{definition}[equation]{Definition}
\newtheorem{remark}[equation]{Remark}
\newtheorem{hypothesis}[equation]{Hypothesis}
\newtheorem{standing-hypothesis}[equation]{Standing Hypothesis}
\newtheorem{example}[equation]{Example}
\title{$\mb{Z}$-graded simple rings}
\author{J. Bell and D. Rogalski}
\address{(Rogalski)
UCSD, Department of Mathematics, 9500 Gilman Dr. \# 0112, La Jolla,
CA 92093-0112, USA. } 
\email{drogalsk@math.ucsd.edu}
\address{(Bell) Department of Pure Mathematics, 
University of Waterloo, 
Waterloo, ON, 
CANADA N2L 3G1}
\email{jpbell@uwaterloo.ca}
\thanks{The first author was  partially   supported by NSERC grant 31-611456.}
\thanks{The second author was partially supported by NSF grants DMS-0900981 and DMS-1201572.}
\subjclass[2000]{16D30, 16P90, 16S38, 16W50}
\keywords{Noncommutative geometry,  graded ring, generalized Weyl algebra, simple ring}
\begin{document}

\begin{abstract}
The Weyl algebra over a field $k$ of characteristic $0$ is a simple ring of Gelfand-Kirillov dimension 2, which has a grading by the group of integers.    We classify all $\mb{Z}$-graded simple rings of GK-dimension 2 and show that they are graded Morita equivalent to generalized Weyl algebras as defined by Bavula.  More generally, we study $\mb{Z}$-graded simple rings $A$ of any dimension which have a graded quotient ring of the form $K[t, t^{-1}; \sigma]$ for 
a field $K$.    Under some further hypotheses, we classify all such $A$ in terms of a new construction of simple rings which we introduce in this paper.   In the important special case that $\GKdim A = \trdeg(K/k) + 1$, we show that 
$K$ and $\sigma$ must be of a very special form.   The new simple rings we define should warrant further 
study from the perspective of noncommutative geometry.
\end{abstract}

\maketitle

\section{Introduction}

Let $k$ be an algebraically closed field.  The Weyl algebra, $A = k \langle x, y \rangle/(yx-xy-1)$, is 
one of the most important and well-studied examples in noncommutative algebra. 
As is well-known, it has Gelfand-Kirillov (GK) dimension 2, and when $\cha k = 0$ it is a simple ring---in fact, in some sense 
it is the prototypical non-artinian simple ring.
The ring $A$ has a $\mb{Z}$-grading with $\deg x =1$, $\deg y = -1$ which has been exploited to interesting 
effect in some recent work.  In particular, the category $\rGr A$ of $\mb{Z}$-graded $A$-modules was shown 
by Paul Smith to be equivalent to the category of quasi-coherent sheaves on a certain stack of dimension one \cite{Sm}; thus, one 
may interpret this category as a noncommutative curve.    Smith's work was inspired by 
earlier work of Sue Sierra, who studied the properties of $\rGr A$ and showed that the class of $\mb{Z}$-graded rings 
with an equivalent graded module category is surprisingly varied, and even includes many non-simple examples \cite{Si}.
Much work in noncommutative algebraic geometry has concentrated on analogs of projective schemes, and in 
particular has focused on $\mb{N}$-graded algebras.  The results above suggest that it would be interesting  
to consider $\mb{Z}$-graded algebras, and the geometry of their graded module categories, more thoroughly.

In this paper, we focus on the problem of finding and studying other examples of $\mb{Z}$-graded rings that 
generalize the Weyl algebra in various ways.  In particular, this project originally began with following question:
what are the other $\mb{Z}$-graded simple domains of GK-dimension 2?  We give 
a complete answer to this question in the next theorem.  First, we review some definitions.  Recall that given a $\mb{Z}$-graded $k$-algebra $A$ which is an Ore domain, localizing at the set of nonzero homogeneous elements yields the \emph{graded quotient ring} $Q_{\rm gr}(A)$,  which has the form of a skew-Laurent ring $D[t, t^{-1}; \sigma]$ for some division ring $D$ with automorphism $\sigma$.
By a \emph{graded Morita equivalence} between two $\mb{Z}$-graded algebras $A$ and $B$, we mean an equivalence of 
the full module categories over these rings which also restricts to an equivalence of their subcategories of $\mb{Z}$-graded modules.

\begin{theorem}

\label{thm:GK2-intro} (Theorem~\ref{thm:GK2})
Let $k$ be an algebraically closed field and let $A=\bigoplus A_i$ be a $\mb{Z}$-graded finitely generated simple $k$-algebra which is a domain of GK-dimension $2$ with $A_i \neq 0$ for all $i \in \mb{Z}$.
\begin{enumerate}
\item The graded quotient ring of $A$ has the form $Q = Q_{\rm gr}(A) \cong K[t, t^{-1}; \sigma]$, where $K = k(u)$
is a rational function field in one variable.  Let $T = A_0$.  We can choose $u$ so that
either 
\begin{enumerate}
\item[(A)] $\sigma(u) = u+1$, $T = k[u]$, and $\cha k = 0$; or 
\item[(B)] $\sigma(u) =p u$ for some non-root of unity $p \in k^*$, and $T = k[u, u^{-1}]$.
\end{enumerate}  
\item $A$ is graded Morita equivalent to a generalized Weyl algebra $T(\sigma, f)$, for some $f \in T$ which does not have two distinct 
roots on any $\sigma$-orbit.   
\end{enumerate}
\end{theorem}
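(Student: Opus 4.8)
The plan is to prove the two parts in sequence, with part (1) providing the structural backbone that makes part (2) almost formal.

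\textbf{Step 1: Pin down the graded quotient ring.} First I would use the hypotheses on $A$ — that it is a finitely generated domain of GK-dimension $2$ with all graded components nonzero — to analyze $Q = Q_{\mathrm{gr}}(A) = D[t,t^{-1};\sigma]$. Since $A$ is a domain with nonzero pieces in every degree, $t$ can be taken to be a nonzero element of $A_1$, and then $D$ is the degree-zero part of the localization of $A$ at homogeneous elements. A Gelfand--Kirillov computation shows $\GKdim A = \GKdim T + 1$ where $T = A_0$, so $T$ has GK-dimension $1$; since $T$ is a domain (it sits inside the division ring $D$) and finitely generated as a consequence of the simplicity/Noetherian-type properties of $A$, Bergman's gap theorem and the classification of GK-dimension-one domains force $D = \operatorname{Frac}(T)$ to be a field $K$ of transcendence degree $1$, hence $K = k(u)$ by Lüroth (using $k = \bar k$). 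The key point is that simplicity of $A$ severely restricts $\sigma$: an ideal of $A$ can be built from any $\sigma$-stable-up-to-shift ideal of $T$, so $\sigma$ must act on $\mathrm{Spec}\, T$ with no finite orbits and no proper invariant structure; combined with $\GKdim A = 2$ this forces $T$ to be (an order in) either $k[u]$ or $k[u,u^{-1}]$ and $\sigma$ to be the corresponding affine or multiplicative map. The trichotomy of automorphisms of $k(u)$ (elliptic/parabolic/hyperbolic in $\mathrm{PGL}_2$) then yields exactly cases (A) and (B), with the characteristic-$0$ restriction in (A) coming from the requirement that $u, u+1, u+2,\dots$ be an infinite orbit.

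\textbf{Step 2: Recognize $A$ as a generalized Weyl algebra up to graded Morita equivalence.} With $T$ and $\sigma$ identified, I would show that $A$ sits inside $Q = K[t,t^{-1};\sigma]$ as a graded subalgebra with $A_0 = T$, and analyze the ``one-step'' data: $A_1 = I_1 t$ and $A_{-1} = I_{-1} t^{-1}$ for fractional $T$-ideals $I_1, I_{-1}$ of $K$, and more generally $A_n = I_n t^n$. Because $A$ is generated in a bounded range of degrees (finite generation plus the grading), and because each $I_n$ is a rank-one reflexive, hence invertible, $T$-module (as $T$ is a Dedekind domain — a PID in fact, in both cases), one shows $I_n = I_{(1)} \sigma(I_{(1)}) \cdots \sigma^{n-1}(I_{(1)})$ up to the Morita freedom. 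The generalized Weyl algebra $T(\sigma,f)$ is the prototype where $I_1 = T$ and $I_{-1} = fT$; a graded Morita equivalence — implemented by an invertible graded $T$-bimodule, i.e.\ by twisting the grading along a sequence of fractional ideals — normalizes $A$ so that $A_1 = T$, and then $A_{-1} A_1 = fT$ for $f \in T$ uniquely determined up to units. The condition that $f$ has no two distinct roots on any $\sigma$-orbit is exactly the translation of simplicity of $A$: if $f$ vanished at two points $\sigma^i(\lambda) \ne \sigma^j(\lambda)$ on one orbit, the classical ideal theory of generalized Weyl algebras (Bavula) produces a proper nonzero ideal, contradicting simplicity; conversely this condition plus infiniteness of orbits gives simplicity.

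\textbf{Main obstacle.} The subtle part is Step 1 — forcing $D$ to be commutative and one-dimensional, and forcing $T$ to be \emph{exactly} $k[u]$ or $k[u,u^{-1}]$ rather than merely an order therein. Proving $D = K$ requires ruling out finite-dimensional noncommutative division algebras over a transcendental base appearing as $D$; this uses that GK-dimension is preserved under passing to matrix rings and that $A$, being graded simple with this quotient ring, cannot be too far from $K[t,t^{-1};\sigma]$ itself. Pinning down $T$ on the nose (no singular orders, no extra invertible-ideal subtleties) is where one must combine the simplicity of $A$ with a careful analysis of which subalgebras of $k(u)$ can be $\sigma$-compatible with all $A_n \ne 0$; I expect this to be the longest argument, and it is presumably where the ``finitely generated'' and ``$A_i \ne 0$ for all $i$'' hypotheses are used most heavily. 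Once these facts are in place, Step 2 is essentially bookkeeping with fractional ideals and an appeal to Bavula's structure theory, together with the observation that graded Morita equivalences of such rings correspond precisely to twisting by a $\ZZ$-indexed sequence of rank-one $T$-modules.
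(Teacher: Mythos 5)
Your overall shape (first pin down $Q_{\rm gr}(A)$, then normalize $A$ inside $K[t,t^{-1};\sigma]$ and compare with a generalized Weyl algebra) matches the paper, but there are genuine gaps at exactly the points where the real work happens. In Step 1, the rationality of $K$ does not follow from L\"uroth: a finitely generated field of transcendence degree one over $k$ need not be rational, and the serious case to exclude is $K=k(E)$ for an elliptic curve $E$ with $\sigma$ an infinite-order translation, which is centerless and is not touched by your ${\rm PGL}_2$ trichotomy (that trichotomy presupposes $K=k(u)$). The paper rules this out using the hypothesis $\GKdim A=\trdeg(K/k)+1$: Lemma~\ref{fixed-V-lem} (a Zhang-style growth estimate) forces every finite-dimensional subspace of $A_0$ into a finite-dimensional $\sigma$-stable subspace of $K$, and Proposition~\ref{rat-prop} then shows $K$ is generated by generalized $\sigma$-eigenvectors, which for an elliptic translation yields only $k$. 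Relatedly, your identity $\GKdim A=\GKdim A_0+1$ is false in general (the paper's Example~\ref{ex:weyl-loc} is a counterexample to such additivity), and neither finite generation of $A_0$ nor the exact equality $A_0=k[u]$ or $k[u,u^{-1}]$ follows from finite generation of $A$; in the paper these are consequences of simplicity, via Lemma~\ref{lem:basics}, Lemma~\ref{T-lem}(2) and Theorem~\ref{thm:inT}(2). You flag "pinning down $T$" as the main obstacle but supply no mechanism for it.

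In Step 2 the central claim -- that up to a graded Morita equivalence one may arrange $I_n=I_1\sigma(I_1)\cdots\sigma^{n-1}(I_1)$, i.e.\ that $A$ becomes generated in degrees $0,\pm 1$ -- is not bookkeeping; it is the heart of the classification and does not follow from invertibility of the $I_n$ over a PID. A simple $\mb{Z}$-graded subring of $T[t,t^{-1};\sigma]$ need not be generated in a single step: the paper shows the multiplicities of the $I_n$ along each $\sigma$-orbit follow a "pleasantly alternating" pattern (Proposition~\ref{support-prop}, resting on the simplicity criterion Lemma~\ref{simple-crit-lem1} and the Artin--Stafford divisor-sequence combinatorics), identifies $A$ as an intersection of rings $B(G,H,J)$ (Theorem~\ref{thm:class}), and only then produces an explicit progenerator realizing a graded Morita equivalence with $B(Z,H,T)\cong T(\sigma,f)$ (Proposition~\ref{prop:Morita}). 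Note also that twisting by a $\mb{Z}$-indexed sequence of rank-one modules (a $\Pic(X)$-twist) gives an equivalence of graded module categories but not a Morita equivalence in general; this conflation is harmless here only because $T$ is a PID, where such twists are isomorphisms. Your translation of simplicity into the condition on the roots of $f$ via Bavula is fine, but as written the passage from "simple graded subring with $A_0=T$" to "Morita equivalent to some $T(\sigma,f)$" is missing its proof.
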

\noindent  
The generalized Weyl algebras are an interesting class of rings defined by Bavula in \cite{Bav} and studied extensively studied by him and others; one possible definition of these rings is given later in the introduction.

As it turns out, both parts of Theorem~\ref{thm:GK2-intro} have surprisingly strong generalizations to rings of higher dimension, 
and the theorem is obtained as a very special case of these more general results.  First we describe the generalization of part (1).  
The theorem implies in particular that the graded quotient ring $Q_{\rm gr}(A)$ is of the form $K[t, t^{-1}; \sigma]$ for a field $K$ of transcendence degree $1$ over $k$.  This is closely related to a well-known result of Artin and Stafford \cite[Theorem 0.1]{AS}, which proves that the graded quotient rings of finitely generated $\mb{N}$-graded domains of GK-dimension 2 must have this form.
In general, we say that a $\mb{Z}$-graded Ore domain $A$ is \emph{birationally commutative} if its graded quotient ring 
has the form $Q_{\rm gr}(A) \cong K[t, t^{-1}; \sigma]$ for a field $K$.   Since $\mb{Z}$-graded simple domains may be too large a class to analyze,  we attempt to generalize to higher dimension more narrowly by adding the hypothesis of birational commutativity. 
If $A$ is a birationally commutative $\mb{Z}$-graded domain with graded quotient ring 
$K[t, t^{-1}; \sigma]$, the GK-dimension of $A$ is at least as big as $\trdeg(K/k) +1$, but there are many examples where 
it is bigger (for instance, see Example~\ref{ex:weyl-loc} below).  It turns out that rings satisfying $\GKdim(A) = \trdeg(K/k) +1$
behave especially well, and so we add this as a hypothesis also.   Finally, it is 
convenient to assume that the center of $Q_{\rm gr}(A) = K[t, t^{-1}; \sigma]$ is as small as possible, that is, 
equal to $k$.  Morally, this should be thought of as a weaker hypothesis than assuming that $A$ is simple (for example, it is automatic if $A$ is  primitive, noetherian, and $k$ is uncountable).  With these restrictions, we can prove the 
following higher-dimensional analog of Theorem~\ref{thm:GK2-intro}(1).
\begin{theorem} (Theorem~\ref{thm:inT})
\label{low-GK-thm} 
Let $k$ be an algebraically closed field.
Suppose that $A$ is a finitely generated $\mb{Z}$-graded $k$-algebra which is an Ore domain with 
graded quotient ring $Q = Q_{\operatorname{gr}}(A) = K[t,  t^{-1}; \sigma]$, where $K$ is a field with 
$\trdeg K/k = d$ and $\GK(A) = d+ 1$.  Assume that $Q$ has center $k$.

Then $K = k(x_1, \dots, x_d)$ is a rational function field in indeterminates $x_i$ over $k$, and $A_0 \subseteq T$ where $\sigma(T) = T$ and one of the following two cases occurs:
\begin{enumerate}
\item[(A)]  $\sigma(x_1) = x_1 + 1$, $\sigma(x_i) = p_i x_i$ for all $i \geq 2$, for some $p_2, 
\dots, p_d$ which generate a free abelian subgroup of $k^{\times}$, $\cha k = 0$, and 
$T = k[x_1, x_2^{\pm 1}, \dots, x_m^{\pm 1}]$; or 
\item[(B)] $\sigma(x_i) = p_i x_i$ for all $i \geq 1$, for some $p_1, \dots, p_d$ which generate 
a free abelian subgroup of $k^{\times}$, and $T = k[x_1^{\pm 1}, x_2^{\pm 1}, \dots, x_m^{\pm 1}]$.
\end{enumerate}
Moreover, if $A$ is simple, then $A_0 = T$.
\end{theorem}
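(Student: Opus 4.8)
The plan is to convert the ring-theoretic hypotheses into statements about an automorphism of a projective variety, to exploit the minimality of $\GK A$ to force that automorphism to be as tame as possible, and then to read off $K$, $\sigma$ and $T$ from the classification of commutative linear algebraic groups acting with a dense orbit. Throughout write $Q = Q_{\operatorname{gr}}(A) = K[t,t^{-1};\sigma]$ and $R_n = A_n t^{-n}\subseteq K$, so that $A = \bigoplus_n R_n t^n$ with $R_n\,\sigma^n(R_m)\subseteq R_{n+m}$, and $T := A_0 = R_0$ is a $k$-subalgebra of $K$; as in the $\GK=2$ case we also use that $A_n\neq 0$ for all $n\in\mb{Z}$. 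A direct computation in $K[t,t^{-1};\sigma]$ shows that $Z(Q) = k$ is equivalent to $K^\sigma = k$ together with $\operatorname{ord}(\sigma) = \infty$; both facts are used repeatedly.

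The main step. First I would establish a divisorial description of $A$: because $A$ is finitely generated I would produce a normal projective model $X$ of $K$ on which $\sigma$ acts as a birational automorphism together with invertible sheaves $\mc{L}_n$ satisfying $\mc{L}_{n+m}\cong\mc{L}_n\otimes(\sigma^n)^{*}\mc{L}_m$ and $R_n = H^0(X,\mc{L}_n)$ up to a bounded error, so that $\mc{L}_n$ is the $n$-th $\sigma$-twisted power of $\mc{L} := \mc{L}_1$ for $n\ge 0$ and symmetrically with $\sigma^{-1}$ for $n\le 0$. Since $d + 1 = \trdeg(K/k)+1$ is the least possible value of $\GK A$, both $n\mapsto\dim_k R_n$ and $n\mapsto\dim_k R_{-n}$ grow at most polynomially of degree $d$; the usual estimates for $h^0$ of twisted powers then force $\sigma$ to have zero entropy and $\sigma^{*}$ to act on $\NS(X)$ with no nontrivial Jordan block, since any positive-entropy behaviour or Jordan block of size $\ge 2$ makes $\dim_k R_n$ grow strictly faster. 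Consequently a power of $\sigma$ lies in the connected algebraic group $\operatorname{Aut}^{0}(X)$ for a suitable model $X$, so $G := \overline{\langle\sigma\rangle}$ is a connected commutative algebraic group acting on $X$; moreover $G$ is \emph{linear}, since $A_0$ contains a finitely generated subalgebra with fraction field $K$ whose $\operatorname{Spec}$ is affine, and an affine variety dominating $K$ forbids $G$ from having an abelian-variety quotient. I expect this to be the crux of the whole argument: obtaining growth estimates that are two-sided and sharp enough to pin the exponent to exactly $d$, upgrading $\sigma$ to an honest automorphism of a projective model, and proving the linearity of $G$.

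Identifying $(K,\sigma,T)$. Since the stabilizer of a rational function inside $\operatorname{Aut}(X)$ is Zariski closed, $K^\sigma = K^{\langle\sigma\rangle} = K^{G}$, so $K^\sigma = k$ says exactly that $G$ acts on $X$ with a dense orbit. Hence $d = \dim X = \dim G - \dim(\text{generic stabilizer})$, and after dividing by the identity component of the latter we may take $\dim G = d$; minimality of $\GK A$ is what prevents $G$ from being larger. A connected commutative linear algebraic group of dimension $d$ in which a single element generates a Zariski-dense subgroup is, up to finite factors, of the form $\mb{G}_m^{r}\times\mb{G}_a^{\eps}$ with $r + \eps = d$ and $\eps\le 1$ (a single element of $\mb{G}_a^{j}$ lies in a $1$-dimensional subgroup), and $\eps = 1$ forces $\cha k = 0$ since otherwise every element of $\mb{G}_a$ has order $p$. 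If $\eps = 0$ then $X$ is toric for $G = \mb{G}_m^d$, so $K = k(G) = k(x_1,\dots,x_d)$ is rational, $\mc{O}(G) = k[x_1^{\pm1},\dots,x_d^{\pm1}]$, and $\sigma$ acts by $x_i\mapsto p_i x_i$; that $\langle\sigma\rangle$ is dense in $\mb{G}_m^d$, i.e.\ $K^\sigma = k$, is precisely the condition that $p_1,\dots,p_d$ generate a free abelian subgroup of $k^{\times}$. This is case (B). If $\eps = 1$, identifying the $\mb{G}_a$-factor with the $x_1$-line gives $K = k(x_1,\dots,x_d)$ rational, $\mc{O}(G) = k[x_1,x_2^{\pm1},\dots,x_d^{\pm1}]$, $\sigma(x_1) = x_1 + 1$, and $\sigma(x_i) = p_i x_i$ for $i\ge 2$ with $p_2,\dots,p_d$ generating a free abelian group; this is case (A). In either case put $T = \mc{O}(G)$; as $\sigma\in G$ acts on $G$ by translation, $\sigma(T) = T$. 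Passing back from the Veronese used above only multiplies the $p_i$ by roots of unity and leaves this picture intact.

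Locating $A_0$, and the simple case. Let $B = X\setminus G$; its prime components are $G$-stable, and they are the only $\sigma$-stable prime divisors of $X$, since a $\sigma$-stable (hence $G$-stable) prime divisor cannot meet the dense orbit. As $T = \mc{O}(X\setminus B)$, the inclusion $A_0\subseteq T$ is equivalent to: no $f\in A_0$ has a pole along a prime divisor $D\not\subseteq B$. If one did, then $D$ is not $G$-stable, so the divisors $\sigma^i(D)$, $i\in\mb{Z}$, are pairwise distinct (if $\sigma^m(D) = D$ with $m\ne 0$ then $\langle\sigma^m\rangle$ is still dense in $G$, forcing $D$ to be $G$-stable); feeding $f$ and its $\sigma$-twists into $R_n\,\sigma^n(R_m)\subseteq R_{n+m}$ then produces elements of $R_n$ with poles along $D + \sigma(D) + \dots + \sigma^{n-1}(D)$ of unbounded order at $D$, making $\dim_k R_n$ outgrow every degree-$d$ polynomial and contradicting $\GK A = d+1$. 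Hence $A_0\subseteq T$. Finally suppose $A$ is simple. Using that the two-sided ideal of $A$ generated by $\bigoplus_{n\ne 0}A_n$ must be all of $A$, together with the relation $A_{-q}A_q = \sigma^{-q}(A_qA_{-q})$ between the ideals $A_qA_{-q}$ and $A_{-q}A_q$ of $A_0$, one first shows that $A_0$ is $\sigma$-stable. If $A_0\subsetneq T$, the structure of $\sigma$-stable subalgebras of $T$ with fraction field $K$ — which in case (B) are spanned by Laurent monomials, each a $\sigma$-eigenvector, and similarly in case (A) using the monomials in $x_2,\dots,x_d$ — yields $g\in A_0$ that is a $\sigma$-eigenvector, say $\sigma(g) = \lambda g$, and a non-unit of $A_0$. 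Then $A_p\, g\, A_{-p} = \lambda^{p} g\,R_p\,\sigma^p(R_{-p})$ shows that the degree-zero part of the two-sided ideal $AgA$ lies in $g A_0 \subsetneq A_0$, so $AgA$ is a proper nonzero ideal, contradicting simplicity. Hence $A_0 = T$.
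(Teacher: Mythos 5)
Your overall strategy is genuinely different from the paper's (which never constructs a geometric model: it uses Zhang's ``sensitive multiplicity'' estimate to show that every finite-dimensional subspace of $A_0$ sits inside a finite-dimensional $\sigma$-stable subspace of $K$, and then does pure linear algebra with the generalized eigenspaces of $\sigma$ to produce $T$ and cases (A)/(B)), but as written your proposal has real gaps at exactly the points where the work has to happen. The ``main step'' — a normal projective model $X$ of $K$ on which $\sigma$ acts as an \emph{automorphism}, invertible sheaves with $R_n=H^0(X,\mc{L}_n)$ up to bounded error, quasi-unipotence on $\NS(X)$, and a power of $\sigma$ landing in $\operatorname{Aut}^0(X)$ — is precisely the kind of structure theorem that is only available for birationally commutative algebras in low dimension (Artin--Stafford for $\trdeg=1$, Rogalski--Stafford for surfaces); for arbitrary $d$ there is no regularization theorem for slow-growth birational self-maps and no comparison of $R_n$ with $h^0$ of twisted powers, and you offer no argument, only the acknowledgement that it is ``the crux.'' Moreover your linearity argument for $G$ is fallacious as stated: \emph{every} finitely generated extension $K/k$ has affine models, so ``an affine variety dominating $K$'' rules out nothing; translations on abelian varieties do satisfy $K^\sigma=k$, and excluding them requires using the $\sigma$-compatible data (in the paper this exclusion is automatic, since a nonzero finite-dimensional $\sigma$-stable subspace of $k(X)$ would have a $\sigma$-invariant pole divisor, and the eigenspace decomposition simply never sees an abelian variety). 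So the route you sketch cannot be completed from what you have written, whereas the paper's Lemma on $\sigma$-stable subspaces plus Proposition on generalized eigenspaces gives the statement in all dimensions with no geometry at all.

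There is also a gap in your treatment of the simple case. You assert that ``one first shows that $A_0$ is $\sigma$-stable'' from simplicity and the identity $A_{-q}A_q=\sigma^{-q}(A_qA_{-q})$, but this does not follow: simplicity gives $\sum_{p\neq 0}R_p\sigma^p(R_{-p})=A_0$ and hence $\sigma^p(A_0)\,I_p\subseteq A_0$ for ideals $I_p$ whose sum is $A_0$, which is far from $\sigma(A_0)\subseteq A_0$. The paper avoids this by passing to $C=k\langle \sigma^n(A_0)\mid n\in\mb{Z}\rangle$, proving $C$ is $\sigma$-simple (an ideal argument inside the auxiliary ring $\bigoplus_n CA_n$), deducing $C=T$ from the classification of $\sigma$-simple $\sigma$-stable subalgebras of $T$ with fraction field $K$, and then using a conductor element $x$ with $xC\subseteq A_0$ together with simplicity of $A$ to force $A_0=C=T$. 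Your eigenvector trick at the end is a nice observation — the computation $(AgA)_0\subseteq gA_0$ for a $\sigma$-eigenvector $g$ is correct and would indeed contradict simplicity — but it only applies once $\sigma$-stability of $A_0$ (and $\operatorname{Frac}(A_0)=K$) is in hand, so it does not close the argument. Similarly, your proof that $A_0\subseteq T$ via poles along a non-periodic divisor still needs a lower bound of Zhang's type on $\dim_k W_i^{\,n-i}$ (many distinct pole divisors alone do not force the growth you claim), which is exactly the content of the paper's Lemma on fixed subspaces.
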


To describe our generalization of part (2) of Theorem~\ref{thm:GK2-intro}, we first need some notation.
Suppose that $R$ is a commutative noetherian $k$-algebra with an automorphism $\sigma: R \to R$.  
Let $X = \spec R$ and let $Z$ be a closed subset of $X$ such that $\sigma^i(Z) \cap Z = \emptyset$ for all $i \neq 0$.  We say that such a closed subset is \emph{$\sigma$-lonely}.  Let $H$ and $J$ be ideals of $R$ such that $\spec R/H$ and $\spec R/J$ are 
contained in $Z$ as sets.  We define a ring 
$B(Z, H, J)$ as follows:
\[
B(Z, H, J) = \bigoplus_{n \in \mb{Z}}  I_n t^n \subseteq R[t, t^{-1}; \sigma], 
\]
where $I_0 = R$, $I_n = J \sigma(J) \cdots \sigma^{n-1}(J)$ for $n \geq 1$, and 
$I_n = \sigma^{-1}(H) \sigma^{-2}(H) \cdots \sigma^{n}(H)$ for $n \leq -1$.
To give a specfic example, the generalized Weyl algebra $T(\sigma, f)$ appearing in Theorem~\ref{thm:GK2-intro} is isomorphic to $B(Z, H, T)$, where $R = T$ and $\sigma$ are as in that theorem, and $Z$ is the closed subset of $X = \spec T$ 
defined by the principal ideal $H = (\sigma^{-1}(f))$ (see the proof of Theorem~\ref{thm:GK2} below).
In Proposition~\ref{prop:Bsimp}, we will show that the ring $B(Z, H, J)$ is simple as long 
as $\sigma:X \to X$ is a \emph{wild} automorphism, that is, $X$ has no closed subsets $Y$ 
with $\sigma(Y) = Y$ other than $Y$ and $\emptyset$.  We also prove that $B(Z, H, J)$ is noetherian if 
the $\sigma$-orbit of the subset $Z$ is critically dense in $X$, but not strongly noetherian if $Z$ has codimension at least $2$ in $X$ 
(see Section~\ref{sec:somerings} for the definitions of these terms and some further discussion).

Our main classification result shows that in wide generality, birationally commutative simple $\mb{Z}$-graded 
domains must be very closely related to the rings $B(Z, H, J)$.
Given any $\mb{Z}$-graded subring $A$ of $K[t, t^{-1}; \sigma]$ with $A = R$, where $K$ is the field of fractions of $R$, 
and an invertible $R$-module $M \subseteq K$, we define a new algebra $A'  = \bigoplus_{n \in \mb{Z}} M_n A_n$ which we call the \emph{$\Pic(X)$-twist} of $A$ by $M$, where $M_n = M \sigma(M) \dots \sigma^{n-1}(M)$ and 
$M_{-n} = [\sigma^{-1}(M) \dots \sigma^{-n}(M)]^{-1}$ for $n \geq 0$.  The rings $A$ and $A'$ are not Morita equivalent in general, 
but they do have equivalent $\mb{Z}$-graded module categories.
\begin{theorem} (Theorem~\ref{thm:class})
\label{thm:general-intro}
Let $k$ be algebraically closed field.  
Let $A$ be a simple $\mb{Z}$-graded $k$-algebra which is finitely generated as an algebra and a birationally commutative Ore domain with  $Q_{\operatorname{gr}}(A) \cong K[t, t^{-1}; \sigma]$.  Assume that either $\cha k = 0$ or that $\trdeg K/k = 1$.  
Let $R = A_0$ and $X = \spec R$.  Suppose 
that $R$ is noetherian algebra such that (i) the integral closure of $R$ is a finite $R$-module; and (ii) the singular locus 
of $X = \spec R$ is a closed subset of $X$. 

Then $R$ is a regular ring with $\sigma(R) = R$, and $\sigma$ is a wild automorphism of $X$.  Moreover, some $\Pic(X)$-twist of $A$ is graded Morita equivalent to $B(Z, H, J)$, for some $\sigma$-lonely subset $Z \subseteq X$ and ideals $H, J$ such that $R/H$ and $R/J$ are supported along $Z$.  In particular, the categories of graded modules $\rGr A$ and $\rGr B(Z, H, J)$ are equivalent.
\end{theorem}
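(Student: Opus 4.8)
The plan is to realize $A$ explicitly inside its graded quotient ring and then deform the data into the form of a $B(Z,H,J)$. Write $A = \bigoplus_{n\in\mb{Z}} A_n t^n \subseteq Q = Q_{\operatorname{gr}}(A) = K[t,t^{-1};\sigma]$ with $A_0 = R$, so each $A_n$ is an $R$-sub-bimodule of $K$ with $A_n\sigma^n(A_m)\subseteq A_{n+m}$. Note first that $A_n\neq 0$ for every $n$: the support of $A$ is a subsemigroup of $\mb{Z}$ generating $\mb{Z}$ (because $Q$ has nonzero elements in every degree), and it meets both $\mb{Z}_{>0}$ and $\mb{Z}_{<0}$ (otherwise $A_{\geq 1}$ or $A_{\leq -1}$ would be a proper nonzero two-sided ideal), hence it is all of $\mb{Z}$. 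The main preliminary input — the $\mb{Z}$-graded analogue of the Artin--Stafford structure theory for finitely generated $\mb{N}$-graded domains — is that $\sigma(R) = R$ and $R$ has fraction field $K$; this is where I expect the standing hypothesis ($\cha k = 0$, or $\trdeg K/k = 1$) to be used, together with finite generation, to exclude Frobenius-type phenomena in positive characteristic. Granting this, $X = \spec R$ is an integral noetherian affine scheme of dimension $d = \trdeg K/k$ on which $\sigma$ acts as an automorphism, and each $A_n$ is a fractional $R$-ideal in $K$.

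Next comes regularity, where simplicity is used for the first time. If $\mf{a}\subsetneq R$ were a nonzero $\sigma$-stable ideal of $R$, then — since $K$ is commutative — $\bigoplus_n \mf{a}A_n t^n$ would be a two-sided ideal of $A$, nonzero and proper (its degree-zero piece is $\mf{a}\neq R$), contradicting simplicity; hence $X$ has no proper nonempty $\sigma$-invariant closed subset, i.e.\ $\sigma$ is wild. The non-normal locus of $X$ is closed (by hypothesis (i), finiteness of the normalization), $\sigma$-stable, and omits the generic point, so by wildness it is empty: $R$ is integrally closed. Likewise the singular locus of $X$ is closed (hypothesis (ii)), $\sigma$-stable, and proper, hence empty: $R$ is regular. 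By the Auslander--Buchsbaum theorem $R$ is then locally factorial, so $\cl(X) = \Pic(X)$ and the reflexive hull of any fractional $R$-ideal is invertible.

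I would then normalize the bimodules. Since $A_1^{**}$ is an invertible $R$-submodule of $K$, set $M := (A_1^{**})^{-1}$ and pass to the $\Pic(X)$-twist $A' = \bigoplus_n M_n A_n t^n$, which changes neither $Q_{\operatorname{gr}}$ nor, by the discussion preceding the theorem, the category $\rGr A$. Now $A_1' = MA_1$ is an honest ideal of $R$ with $V(A_1')$ of codimension $\geq 2$ in $X$, and the relation $A_1\sigma(A_{-1})\subseteq R$, read at the level of divisor classes, forces $A_{-1}' = M_{-1}A_{-1}$ to be an honest ideal of $R$ as well; so we may assume $A_1, A_{-1}\subseteq R$. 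Using finite generation of $A$, one shows that $A'$ is generated in degrees $\{-1,0,1\}$, so that $A_n' = A_1'\sigma(A_1')\cdots\sigma^{n-1}(A_1')$ for $n\geq 1$ and symmetrically for $n\leq -1$. Setting $J := A_1'$ and letting $H$ be the ideal with $\sigma^{-1}(H) = A_{-1}'$, this identifies $A'$ — up to a (possibly trivial) graded Morita equivalence absorbing any residual failure of low-degree generation — with $B(Z,H,J)$ for $Z := V(J)\cup V(H)$, provided $Z$ is $\sigma$-lonely.

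The remaining point, which I expect to be the main obstacle, is exactly that $Z$ is $\sigma$-lonely: $\sigma^i(Z)\cap Z = \emptyset$ for all $i\neq 0$. This is a second, subtler use of simplicity: if some prime, of arbitrary codimension, lay in $Z\cap\sigma^i(Z)$, one would build a nonzero proper graded two-sided ideal of $A' = B(Z,H,J)$, paralleling the fact that a generalized Weyl algebra $T(\sigma,f)$ is simple only when $f$ has no two distinct roots on a single $\sigma$-orbit (cf.\ Theorem~\ref{thm:GK2}). Doing this uniformly in the codimension of the offending subvariety, and without assuming $A$ noetherian, is the heart of the argument. Once $Z$ is known to be $\sigma$-lonely, $A'$ is graded Morita equivalent to $B(Z,H,J)$ for this $\sigma$-lonely $Z$ and ideals $H,J$ with $R/H$ and $R/J$ supported along $Z$; combined with $\sigma(R) = R$, $R$ regular, and $\sigma$ wild from above, this gives the structural conclusions, and since a $\Pic(X)$-twist preserves the graded module category we get $\rGr A \simeq \rGr A' \simeq \rGr B(Z,H,J)$, completing the proof.
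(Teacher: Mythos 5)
There is a genuine gap, and it sits exactly where you placed your hedge. Your plan is to twist by $M=(A_1^{**})^{-1}$, argue that the twisted algebra is generated in degrees $\{-1,0,1\}$, and thereby identify it with $B(Z,H,J)$ ``up to a (possibly trivial) graded Morita equivalence absorbing any residual failure of low-degree generation.'' That failure is not residual: the rings $B(G,H,J)$ for a nontrivial pleasantly alternating cycle $G$ satisfy every hypothesis of the theorem, are generated only in degrees up to $2N-1$ (Lemma~\ref{lem:B-props1}(2)), and are in general neither isomorphic to, nor carried by any $\Pic(X)$-twist onto, a ring $B(Z,H,J)$ with trivial cycle --- which is precisely why the theorem asserts a graded Morita equivalence rather than an isomorphism. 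The paper's proof therefore has two substantial ingredients you have not supplied: (a) a structure theorem for the support of the ideals $I_n$, orbit by orbit (Proposition~\ref{support-prop}), showing via the simplicity criterion of Lemma~\ref{simple-crit-lem1} and the Artin--Stafford $\sigma$-divisor-sequence combinatorics that the multiplicity pattern along each orbit is $|G_n|$, $G_n^+$ or $(-G_n)^+$ for a \emph{unique} pleasantly alternating cycle $G$, whence $A=\bigcap_i B(G^{(i)},H^{(i)},J^{(i)})$; and (b) the explicit progenerator construction of Proposition~\ref{prop:Morita}, which is what converts $B(G^{(i)},H^{(i)},J^{(i)})$ into $B(Z,H,J)$ up to graded Morita equivalence. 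The $\sigma$-loneliness of $Z$, which you correctly flag as essential but leave open, is not proved by manufacturing a proper ideal; it falls out of the uniqueness of the cycle $G(p)$, which gives an equivalence relation on points under which $\sigma^j(q)\equiv q$ is impossible for $j\neq 0$ (Lemma~\ref{Y-lem}).

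Two further misplacements are worth noting. First, the hypothesis ``$\cha k=0$ or $\trdeg K/k=1$'' is not used to get $\sigma(R)=R$ (that follows from simplicity together with hypothesis (i), via the integral extension $R\subseteq C$ of Lemma~\ref{lem:basics}); it is used in Lemma~\ref{lem:critdense}, where Bell's generalized Skolem--Mahler--Lech theorem gives critical density of orbits under the wild automorphism, which is what makes the support cycles of the $I_n$ well-defined (finitely supported) in the first place. Second, the normalization $M=(A_1^{**})^{-1}$ does not force $A'_n\subseteq R$ for $|n|\geq 2$; the correct twist is produced by Lemma~\ref{lem:trivchange}, which needs the divisor-sequence analysis (Lemma~\ref{lem:AS-comb}) applied to the reflexive hulls of all the $W_n$, not just $W_1$. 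The opening reductions in your proposal (nonvanishing of all $A_n$, wildness of $\sigma$, emptiness of the non-normal and singular loci) do match the paper, but the combinatorial core --- pleasantly alternating cycles and the Morita equivalence between the resulting rings --- is missing.
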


The assumptions on $\cha k$ and on $R = A_0$ in the theorem above may just be artifacts of our proof, as we have no examples showing that any of these assumptions is necessary.
In any case, (i) and (ii) are very weak assumptions which hold for all excellent rings, in particular for any finitely generated $k$-algebra $R$.  Unfortunately, the assumption that $A$ is finitely generated 
as an algebra does not imply that $R = A_0$ is finitely generated, and there are many examples satisfying the theorem for which 
$A_0$ is indeed infinitely generated as an algebra.  
On the other hand, in the important special case of Theorem~\ref{thm:general-intro} where 
we also assume the hypotheses of Theorem~\ref{low-GK-thm} (in particular that $\GKdim(A) = \trdeg(K/k) + 1$), then 
that theorem shows in particular that $A_0 = T$ is finitely generated, and so (i) and (ii) become automatic.

Birationally commutative connected $\mb{N}$-graded algebras have been studied extensively, particularly those of GK-dimension 3 where there is now a detailed classification; see \cite{RS} and \cite{Si}.   The analysis of such birationally commutative algebras in higher dimension seems to be a very difficult problem, so it is surprising that in the $\mb{Z}$-graded simple setting we 
are able to prove a dimension-independent structure theorem such as Theorem~\ref{thm:general-intro}.  
Birationally commutative $\mb{N}$-graded algebras have provided many examples of rings which are noetherian but not strongly noetherian, and the rings $B(Z, H, J)$ show that there are also such examples which are simple and $\mb{Z}$-graded.

To close, we discuss a few further questions.  
First, we have not devoted a lot of study to the representation theoretic or geometric properties 
of the categories of graded modules over the simple rings we construct in this paper.
Given a ring 
of the form $B(Z, H, J)$, is its category of $\mb{Z}$-graded modules equivalent to the category of quasi-coherent sheaves on some stack, as in Smith's work on the Weyl algebra \cite{Sm}?  
Next, in order to fully understand what simple rings  $B(Z, H, J)$ are possible, one would like to understand what are the 
wild automorphisms $\sigma$ of regular noetherian schemes  $X = \spec R$, and which 
closed  subsets $Z$ are $\sigma$-lonely.   The question about wild automorphisms is 
a close counterpart to a similar question about wild automorphisms of projective varieties studied in \cite{RRZ}; see Remark~\ref{rem:wild} below 
for more discussion.   We do study here the question of which subsets are $\sigma$-lonely in the case 
that $X = \spec T$ and $\sigma$ are of the form occurring in Theorem~\ref{low-GK-thm}.   In this 
case we can fully characterize $\sigma$-lonely subsets $Z$ of codimension $1$ in $X$ (see Theorem~\ref{thm:lonely} below), but 
the characterization for $Z$ of arbitrary codimension is open.
Finally, one of our hopes for this project was to produce some interesting new examples of simple algebras which might serve as a testing 
ground for conjectures about simple rings.  One famous such open question is the following:  can every right ideal of a simple noetherian ring be generated by at most 2 elements \cite[Appendix, Question 19]{GW}?   It would be interesting to study this question for the simple rings of the form $B(Z, H, J)$.

\section*{Acknowledgements} We thank Sue Sierra, Lance Small, Paul Smith, and Toby Stafford for helpful conversations.

\section{Some simple $\mb{Z}$-graded algebras}
\label{sec:somerings} 
\emph{Throughout this paper, $k$ will be an algebraically closed field.}  In some later results it will be convenient to assume further that $k$ is uncountable or of characteristic $0$.   All rings in this paper will be algebras over the field $k$, and all schemes will be $k$-schemes, 
though we will not always emphasize this explicitly.

In this section, we construct some interesting examples of simple algebras, and study their properties.   Some special 
cases of our construction include well-known examples such as generalized Weyl algebras and rings Morita equivalent to them, 
but to our knowledge our class of examples has not been considered previously as a whole.

To begin the section, we define the useful notion of cycle on an orbit of a closed subset.
\begin{definition}
\label{def:cycle}
Let $X$ be a scheme with automorphism $\sigma: X \to X$.  For any closed subset $Z$ of $X$, we define
$Z_i = \sigma^{-i}(Z)$ for each $i \in \mb{Z}$.   
A \emph{cycle} supported on the orbit of $Z$ will be an element of the free abelian group on the $Z_i$.   
Essentially we treat the $Z_i$ as formal symbols, which should be thought of as distinct regardless of whether the corresponding closed subsets are.  However, in our intended applications we will usually have $Z_i \cap Z_j = \emptyset$ for $i \neq j$ anyway.

We call a cycle \emph{effective} if all of its coefficients are nonnegative. 
If $D$ and $E$ are cycles then we write $D \geq E$ if $D - E$ is effective.  We write $\max(D, E)$
for the smallest cycle $F$ such that $F-D$ and $F-E$ are effective, and define $\min(D, E)$ similarly.
Given a cycle $D = \sum_i a_i Z_i$, we write $\sigma^j(D)$ for the cycle 
$\sum_i a_i \sigma^j(Z_i) =  \sum_i a_i Z_{i-j} = \sum_i a_{j+i} Z_i$.
\end{definition}
\noindent  The symbols $Z_i$ in a cycle $\sum a_i Z_i$ are primarily placeholders and the combinatorics of the integers $a_i$ will be our main concern.  Typically $a_i$ will measure the multiplicity of vanishing of a function or ideal along the closed subset $\sigma^{-i}(Z)$.
We will not apply intersection theory to cycles.

 \begin{definition}
\label{B-def}
\label{def:Gn}
Let $X$ be a scheme, let $\sigma: X \to X$ be an automorphism and let $Z \subseteq X$ be a closed subset.
Let $G$ be any cycle $\sum a_i Z_i$.  We define for each $n \in \mb{Z}$ 
a cycle $G_{n}$, as follows.  Set $G_{0} = 0$.  For $n \geq 1$,  let $G_{n} =  G + \sigma^{-1}(G) + \dots + \sigma^{-n+1}(G)$, and let $G_{-n} = -\sigma(G) - \sigma^2(G) - \dots - \sigma^n(G)$.  
\end{definition}
We have the following trivial properties of this definition, whose proofs we leave to the reader.
\begin{lemma}
\label{lem:easyGfacts} Fix a cycle $G$ on the $\sigma$-orbit of $Z$ and define $G_n$ as in Definition~\ref{def:Gn}.
\begin{enumerate}
\item  $G_{n} = - \sigma^{-n}(G_{-n})$ for 
any $n \in \mb{Z}$.

\item $G_{m} + \sigma^{-m}(G_{n}) = G_{m+n}$ for all $m,n \in \mb{Z}$. \hfill $\Box$
\end{enumerate}
\end{lemma}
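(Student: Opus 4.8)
The plan is to reduce both identities to a single observation: under a mild sign convention for sums, $G_n$ admits a description that is uniform in the sign of $n$. Introduce the convention that for integers $a\le b$ the symbol $\sum_{j=a}^{b-1}c_j$ denotes the usual (possibly empty) sum, while for $b<a$ one sets $\sum_{j=a}^{b-1}c_j:=-\sum_{j=b}^{a-1}c_j$. With this convention the additivity law $\sum_{j=a}^{b-1}c_j+\sum_{j=b}^{c-1}c_j=\sum_{j=a}^{c-1}c_j$ holds for \emph{all} integers $a,b,c$, which is a routine check over the possible orderings of $a,b,c$. First I would verify, by unwinding Definition~\ref{def:Gn} in the cases $n\ge 0$ and $n<0$ separately and reindexing $i\mapsto -i$ in the displayed formula for $G_{-n}$, that in every case $G_n=\sum_{j=0}^{n-1}\sigma^{-j}(G)$ in the sense of the signed sum above. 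This is the only step in which the asymmetric sign conventions of the definition intervene, and it is pure bookkeeping.

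Granting the uniform formula, part (2) follows immediately. Since $\sigma$ is an automorphism of the free abelian group of cycles on the orbit of $Z$, applying $\sigma^{-m}$ termwise and shifting the summation index gives $\sigma^{-m}(G_n)=\sum_{j=0}^{n-1}\sigma^{-m-j}(G)=\sum_{j=m}^{m+n-1}\sigma^{-j}(G)$, whence $G_m+\sigma^{-m}(G_n)=\sum_{j=0}^{m-1}\sigma^{-j}(G)+\sum_{j=m}^{m+n-1}\sigma^{-j}(G)=\sum_{j=0}^{m+n-1}\sigma^{-j}(G)=G_{m+n}$ by the additivity law. Part (1) is then the special case of part (2) obtained by taking the first index to be $n$ and the second to be $-n$: this yields $G_n+\sigma^{-n}(G_{-n})=G_{0}=0$, i.e.\ $G_n=-\sigma^{-n}(G_{-n})$. (Alternatively, part (1) can be obtained directly by applying the substitution $\ell=n+j$ to $-\sigma^{-n}(G_{-n})=-\sum_{j=0}^{-n-1}\sigma^{-n-j}(G)$ and invoking the signed-sum convention once more.)

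There is no genuine obstacle here: both statements are the bookkeeping identities $\sum_{0}^{m-1}+\sum_{m}^{m+n-1}=\sum_{0}^{m+n-1}$ and $\sum_{0}^{n-1}=-\sum_{n}^{-1}$ in disguise, as foreseen by the authors' remark that the proofs are trivial. The only points that require a little care are the verification of the uniform formula for $G_n$ and of the additivity law for signed sums across all orderings of the bounds; once those are in place, everything reduces to a one-line index shift. Should one wish to avoid the signed-sum formalism, the same conclusions can be reached by a somewhat more tedious case analysis on the signs of $m$, $n$, and $m+n$, using the one-step recursion $G_{n+1}=G_n+\sigma^{-n}(G)$, valid for all $n\in\mb{Z}$, which itself is immediate from the definition.
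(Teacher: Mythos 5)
Your argument is correct. The uniform signed-sum formula $G_n=\sum_{j=0}^{n-1}\sigma^{-j}(G)$ does hold with your convention (for $n\le -1$ it unwinds exactly to $-\sigma(G)-\cdots-\sigma^{-n}(G)$, matching Definition~\ref{def:Gn}), applying $\sigma^{-m}$ termwise is legitimate since it is an automorphism of the free abelian group of cycles, and the additivity and shift-invariance of signed sums then give (2) in one line, with (1) as the case $G_n+\sigma^{-n}(G_{-n})=G_0=0$. This is in substance the same elementary index-shift verification the authors have in mind (they leave the proof to the reader; their own sketch proves (2) directly from the definition when $m,n\ge 0$ and then disposes of the mixed-sign cases one by one, using (1) as an input). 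The only real difference is organizational: your signed-sum convention absorbs the case analysis on the signs of $m$, $n$, $m+n$ into a single bookkeeping lemma, and reverses the logical dependence by deducing (1) from (2) rather than using (1) to handle the negative-index cases of (2). Either route is fine; yours is marginally slicker, the paper's is marginally more self-contained since it never needs the signed-sum formalism.
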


The special cycles in the following definition will play a crucial role below.  
\begin{definition}
A cycle of the form $G  = \sum_{i=m}^n a_i Z_i$ with $m \leq n$ is \emph{pleasantly alternating}
if $a_m = 1 = a_n$ and the nonzero $a_i$ with $m \leq i \leq n$
alternate strictly between $1$ and $-1$.    We say that $G$ is a \emph{trivial} pleasantly alternating sequence 
if $m = n$ and so $G = Z_m$.
\end{definition}
\noindent
 For example,
$Z_{-3} - Z_{-1} + Z_0 - Z_5 + Z_6$ is pleasantly alternating.   

We work out some of the basic combinatorial properties of the cycles $G_{n}$, where $G$ is pleasantly alternating, in the next two results.
\begin{lemma}
\label{G-lem}
Let $X$ be a scheme with automorphism $\sigma$ and let $Z \subseteq X$ be a closed subset.
Let $G = \sum_{i = r}^s g_i Z_i$  be a pleasantly alternating cycle on the orbit of $Z$ with 
with $g_r = g_s = 1$, and let $N = s-r$.
Write $G_{n} = \sum_i g_{n,i} Z_i$.  

\begin{enumerate}
\item $G_{n}$ has all of its coefficients in $\{-1, 0, 1 \}$, for $n \in \mb{Z}$.  Moreover, 
$G_{n}$ has all of its coefficients in $\{0, 1 \}$ for $n \geq N$ and  in $\{ 0 , -1 \}$ for $n \leq -N$.

\item For any $i \in \mb{Z}$, $g_{n,i}$ is constant for all $n \gg 0$, 
say $g_{n, i} = a$, for all $n \gg 0$.  Similarly, $g_{n,i} = b$ for all $n \ll 0$, 
some $b$.  Either $a=0$ and $b = -1$, or $a=1$ and $b = 0$.

\item For $n \geq N$, $\min(G_{n},  -G_{-n}) = \min(G_{n}, \sigma^n(G_{n})) = 0$.  
\end{enumerate}
\end{lemma}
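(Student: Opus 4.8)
The plan is to reduce all three parts to an elementary fact about contiguous partial sums of the coefficient sequence of $G$. Extend $(g_i)$ by setting $g_m = 0$ for $m \notin [r,s]$; then the nonzero $g_m$, listed in order of increasing $m$, form the strictly alternating string $1, -1, 1, \dots, 1$ with $g_r = g_s = 1$. Expanding $G_n = \sum_{j=0}^{n-1} \sigma^{-j}(G)$ for $n \ge 1$ and using $\sigma^{-j}(Z_m) = Z_{m+j}$, the coefficient of $Z_i$ in $G_n$ is the length-$n$ sliding-window sum
\[
g_{n,i} \;=\; \sum_{j=0}^{n-1} g_{i-j} \;=\; \sum_{m=i-n+1}^{i} g_m .
\]
The identity $G_n = -\sigma^{-n}(G_{-n})$ of Lemma~\ref{lem:easyGfacts}(1) reduces the statements for $n \le 0$ to the case $n \ge 0$. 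The one combinatorial input needed is: for any $a \le b$, the partial sum $\sum_{m=a}^{b} g_m$ lies in $\{-1,0,1\}$, and it equals $+1$ (resp.\ $-1$) precisely when the first and the last nonzero $g_m$ with $a \le m \le b$ are both $+1$ (resp.\ both $-1$), and equals $0$ otherwise. This is clear, since the nonzero terms occurring in such a window form a contiguous block of the globally alternating string.

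From this, part (1) is immediate: every $g_{n,i} \in \{-1,0,1\}$, and $g_{n,i} = -1$ forces the window $[i-n+1, i]$ to miss both endpoints $r$ and $s$ of the support of $(g_m)$, i.e.\ $i - n + 1 \ge r+1$ and $i \le s-1$, hence $n \le s - 1 - r = N - 1$; so $g_{n,i} \in \{0,1\}$ whenever $n \ge N$, and the dual statement for $n \le -N$ follows via Lemma~\ref{lem:easyGfacts}(1). For part (2), fix $i$: once $n \ge i - r + 1$ the window $[i-n+1, i]$ contains every nonzero index $\le i$, so $g_{n,i}$ stabilizes at $a := \sum_{m \le i} g_m$, which lies in $\{0,1\}$ because the first nonzero term of the whole string is $g_r = +1$; dually $g_{n,i} \to b := -\sum_{m \ge i+1} g_m \in \{0,-1\}$ as $n \to -\infty$. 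Since a pleasantly alternating string has one more $+1$ than $-1$, $\sum_m g_m = 1$, so $a - b = 1$, which together with $a \in \{0,1\}$ and $b \in \{0,-1\}$ forces $(a,b) = (0,-1)$ or $(a,b) = (1,0)$.

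For part (3), Lemma~\ref{lem:easyGfacts}(1) gives $\sigma^n(G_n) = -G_{-n}$, so $\min(G_n, -G_{-n}) = \min(G_n, \sigma^n(G_n))$ and it is enough to prove the latter is $0$. Minimum of cycles is computed coefficientwise, and the coefficient of $Z_i$ in $\sigma^n(G_n)$ is $g_{n,i+n} = \sum_{m=i+1}^{i+n} g_m$, so I must show $\min(g_{n,i}, g_{n,i+n}) = 0$ for all $i$ when $n \ge N$. By part (1) both entries lie in $\{0,1\}$ for $n \ge N$, so it remains only to exclude $g_{n,i} = g_{n,i+n} = 1$. The windows $[i-n+1,i]$ and $[i+1,i+n]$ are disjoint and adjacent; if both their sums equal $1$, let $p$ be the largest nonzero index in the first window and $q$ the smallest nonzero index in the second, so $g_p = g_q = +1$ and $p < q$ (as $p \le i < i+1 \le q$). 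Every index $m$ with $p < m < q$ lies in one of the two windows and satisfies $g_m = 0$, so $p$ and $q$ are consecutive nonzero indices of $(g_m)$ — contradicting that consecutive nonzero terms alternate in sign. Hence $\min(G_n, \sigma^n(G_n)) = 0$.

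All of this is routine bookkeeping, and I do not anticipate a genuine obstacle; the only thing requiring care is the index conventions — the shift $\sigma^j(Z_i) = Z_{i-j}$, the asymmetric definition of $G_n$ for $n < 0$, and especially the shift hidden in $\sigma^n(G_n)$ in part (3) — so I would sanity-check the reduction in (3) against a small example such as $G = Z_0 - Z_1 + Z_2$ before committing to it.
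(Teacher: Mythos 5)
Your proposal is correct and follows essentially the same route as the paper's proof: compute $g_{n,i}$ as the sliding-window sum $\sum_{j=i-n+1}^{i} g_j$, reduce negative $n$ via $G_n = -\sigma^{-n}(G_{-n})$, and use the alternation of the coefficient string (with $d_i = \sum_{j \le i} g_j$, $e_i = \sum_{j>i} g_j$, $d_i + e_i = 1$) for parts (1)--(3). The only difference is that in (3) you spell out the ``consecutive nonzero indices of equal sign'' argument which the paper compresses into ``by the definition of pleasantly alternating sequence, these numbers cannot both be $1$.''
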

\begin{proof}
(1)   
By definition, for $n \geq 0$ 
we have 
\[
G_{n} = \sum_{j = 0}^{n-1} \sigma^{-j}(G) = 
\sum_{j =0}^{n-1} \sum_i g_i Z_{i+j}
= \sum_k \sum_{j = 0}^{n-1} g_{k-j} Z_k,
\]
from which we see that $G_{n} = \sum_i g_{n,i} Z_i$ where 
$g_{n,i} = \sum_{j = 0}^{n-1} g_{i-j} = \sum_{j = i-n + 1}^i g_j$. 
Each such number is a sum of consecutive coefficients in $G$ and, 
since $G$ is pleasantly alternating, is a number in $\{-1, 0, 1 \}$.  
Moreover, if we put $d_i = \sum_{\{j \in \mb{Z} | j \leq i\}} g_j$ for each $i \in \mb{Z}$,
then clearly $g_{n,i} = d_i$ for all $n \gg 0$.  Also, 
each $d_i \in \{0, 1 \}$ since $G$ is pleasantly alternating.
If $n \geq N$, then $g_{n,i}$ is a sum of at least $N$ consecutive coefficients of $G$
and so must lie in $\{0, 1 \}$.  Thus  $G_{n}$ is effective for $n \geq N$.

If instead $n < 0$, since $G_{n} = - \sigma^{-n}(G_{-n})$ the result 
follows.

(2) the proof of (1) showed that $g_{n,i}$ is constant equal to $d_i$ for $n \gg 0$.
We claim that for any $i$, $g_{n, i+n}$ is also constant for $n \gg 0$.  
Since $g_{n, i+n} = \sum_{j= i+1}^{i+n} g_j$, we get for $n \gg 0$ 
that 
$g_{n, i+n} = e_i = \sum_{j \geq i+1} g_j$.  
Note that $d_i + e_i = \sum_{j \in \mb{Z}} g_j = 1$.

Now it is easy to check that the equation $G_{n} = - \sigma^{-n}(G_{-n})$ implies 
that $g_{n,i} = - g_{-n, i-n} =  - e_i$ for $n \ll 0$.  
So in the notation of the lemma, $a = d_i$ and $b = -e_i$.
Since $d_i + e_i = 1$, 
either $a = d_i = 0$ and $b = -e_i = -1$, or else $a = d_i = 1$ and $b =  -e_i = 0$.  

(3)  This follows from a similar calculation as in part (2).  Namely, for $n \geq N$ we need $g_{n,i}$ and $- g_{-n,i}$ to 
not both be 1.  We calculated that $g_{n,i} = \sum_{j = i - n + 1}^i g_j$ and 
$-g_{-n,i} = g_{n, i+n} = \sum_{j = i + 1}^{i+n} g_j$.  By the definition of pleasantly alternating 
sequence, these numbers cannot both be $1$.
\end{proof}

There is a kind of converse to Lemma~\ref{G-lem}(3) which we give next.
\begin{lemma}
\label{lem:converse} Let $Z \subseteq X$ be a closed subset of a scheme $X$ with automorphism $\sigma$.
Let $G = \sum_{i = r}^s g_i Z_i$ be a cycle supported on the orbit of $Z$ and let $N = s-r$.  

Define $G_n$ as in Definition~\ref{def:Gn} and suppose that $G_{n}$ is effective for all $n \gg 0$.  Then either there is $r \leq i \leq s-1$ such that $Z_i \leq \min(G_{n}, \sigma^n(G_{n}))$ 
for all $n \geq N$, or else $\min(G_{n}, \sigma^n(G_{n})) = 0$ for all $n \geq N$.  In the latter case, 
$G$ is a nonnegative multiple of a pleasantly alternating cycle.
\end{lemma}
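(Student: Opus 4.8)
The plan is to translate everything into the combinatorics of the partial-sum sequence of $G$, as in the proof of Lemma~\ref{G-lem}. Write $G = \sum_{i=r}^{s} g_i Z_i$, put $d_i = \sum_{j \le i} g_j$ for $i \in \mb{Z}$, and set $m = d_s = \sum_i g_i$; thus $d_i = 0$ for $i < r$ and $d_i = m$ for $i \ge s$. Recall from the proof of Lemma~\ref{G-lem} that for $n \ge 0$ one has $G_n = \sum_i g_{n,i} Z_i$ with $g_{n,i} = \sum_{j=i-n+1}^{i} g_j = d_i - d_{i-n}$, and $\sigma^n(G_n) = \sum_i g_{n,i+n}Z_i$. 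The first step is to note that the hypothesis forces $0 \le d_i \le m$ for every $i$: fixing $i$ and taking $n$ large, the coefficients $g_{n,i} = d_i$ and $g_{n,i+n} = d_{i+n}-d_i = m - d_i$ of $G_n$ must both be nonnegative. (This also makes $G_n$ effective for every $n \ge N$, but only the bound $0 \le d_i \le m$ is needed below.)

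The key computation is the value of $\min(G_n, \sigma^n(G_n))$ for $n \ge N$, whose coefficient of $Z_i$ is $\min(g_{n,i},\, g_{n,i+n})$. For $i \le r-1$ we have $g_{n,i}=0$ and $g_{n,i+n}=d_{i+n}\ge 0$, so this is $0$; for $i \ge s$ we have $g_{n,i+n} = m-d_i = 0$ and $g_{n,i} = m - d_{i-n}\ge 0$, so again it is $0$; and for $r \le i \le s-1$, since $n \ge N=s-r$ forces $i-n \le r-1$ and $i+n\ge s$, we get $g_{n,i} = d_i$ and $g_{n,i+n} = m - d_i$, so the coefficient is $\min(d_i,\,m-d_i)$. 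Hence
\[
\min(G_n,\, \sigma^n(G_n)) \;=\; \sum_{i=r}^{s-1}\min(d_i,\,m-d_i)\,Z_i \qquad\text{for all } n \ge N,
\]
a cycle independent of $n$.

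Now dichotomize. If $\min(d_i, m-d_i)\ge 1$ for some $i$ with $r\le i \le s-1$, then $Z_i \le \min(G_n,\sigma^n(G_n))$ for all $n \ge N$, the first alternative. Otherwise $\min(d_i,m-d_i)=0$ for every such $i$, whence $\min(G_n,\sigma^n(G_n))=0$ for all $n \ge N$, the second alternative, and it remains to identify $G$. If $m=0$ then $0\le d_i\le 0$ gives $G=0$, a (trivial) nonnegative multiple of a pleasantly alternating cycle; so assume $m\ge 1$ and $G\neq 0$. Then $\min(d_i,m-d_i)=0$ together with $0\le d_i\le m$ gives $d_i\in\{0,m\}$ for all $i$, so each $g_i=d_i-d_{i-1}$ lies in $\{0,\pm m\}$ and $G = m\,G'$ for an integral cycle $G'=\sum(g_i/m)Z_i$ with coefficients in $\{0,\pm 1\}$. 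Reading off the jumps of the $\{0,m\}$-valued sequence $d$: the first nonzero coefficient of $G'$ is $d_r/m$, which equals $1$ (it is nonzero and $d_{r-1}=0$); the last is $(m-d_{s-1})/m$, which equals $1$ (nonzero forces $d_{s-1}=0$); and between two consecutive nonzero coefficients of $G'$ the sequence $d$ is constant, so these must be the two opposite transitions $0\to m$ and $m\to 0$, i.e.\ $+1$ and $-1$ in alternating order. Thus $G'$ is pleasantly alternating and $G = mG'$ is as claimed.

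I expect the only real friction to be bookkeeping: keeping the boundary coefficients $g_r,g_s$ straight in the last step, handling the degenerate case $m=0$ (i.e.\ $G=0$), and making the passage from ``$G_n$ effective for $n\gg0$'' to the uniform bound $0\le d_i\le m$ fully airtight. There is no conceptually hard point; the substance is simply that $\min(G_n,\sigma^n(G_n))$ is constant in $n$ for $n\ge N$, which drops out of the partial-sum formula.
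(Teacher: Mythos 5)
Your proof is correct and follows essentially the same route as the paper: both arguments reduce to the partial sums $d_i$ (and $m-d_i$, the paper's $e_i$), derive the explicit stabilized formula for $G_n$ when $n\geq N$, split into the two cases according to whether some $\min(d_i,m-d_i)$ is positive, and in the second case read off from $d_i\in\{0,m\}$ that $G$ is $m$ times a pleasantly alternating cycle. The only differences are cosmetic (you keep the support $[r,s]$ instead of shifting to $[0,N]$, and you spell out the jump/alternation argument and the degenerate case $m=0$ a bit more explicitly).
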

\begin{proof}
By shifting indices we may assume that $G$ is of the form $G = \sum_{i = 0}^N g_i Z_i$.
Write $G_{n} = \sum_i g_{n,i} Z_i$ for each $n$, and let 
$d_i = \sum_{j \leq i} g_j$ and $e_i = \sum_{j > i} g_j$ as in the proof of Lemma~\ref{G-lem}.  Let $d = \sum_{j \in \mb{Z}} g_j$.     The same calculation as in Lemma~\ref{G-lem} shows that for $n \geq N$
we have 
\begin{equation}
\label{eq:comb}
G_{n} = d_0 Z_0 + d_1 Z_1 + \dots + d_{N-1} Z_{N-1} +
d Z_N + d Z_{N+1} + \dots + d Z_{n-1} + e_0 Z_n + e_1 Z_{n+1}
+ \dots + e_{N-1} Z_{n + N -1}.
\end{equation}
The hypothesis that $G_{n}$ is effective for $n \gg 0$ now forces $d_i \geq 0$ and $e_i \geq 0$ for all $i$.
Taking $i$ in the range $0 \leq i \leq N-1$, we see from \eqref{eq:comb} that 
$\min(G_{n}, \sigma^n(G_{n})) \geq Z_i$ for all $n \geq N$, if $e_i > 0$ and $d_i > 0$.

Otherwise, since $e_i + d_i = d$  for each $i$ we must have either $d_i = d$ and $e_i = 0$, or else $d_i = 0$ and $e_i = d$.  
Then from \eqref{eq:comb} we get $\min(G_{n}, \sigma^n(G_{n}))  = 0$ for $n \geq N$.
Moreover, since $d_{-1} = 0$, $d_N = d$, and $d_{i+1} = d_i + g_{i+1}$ for each $i$, we see that $g_i \in \{-d, 0, d \}$ for $0 \leq i \leq N-1$.  Finally, the only way we can obtain $d_i \in \{0, d \}$ for all $i$ is for $G$ to be $d$ times a pleasantly alternating cycle.
\end{proof}

Before we define the simple rings of main interest in this section, we need one more definition.
\begin{definition}
Let $X$ be a scheme, and let $\sigma: X \to X$ be a automorphism.   A closed subset $Z$ of $X$ is \emph{$\sigma$-{lonely}} (or just \emph{lonely} if $\sigma$ is understood) if 
$Z \cap \sigma^i(Z) = \emptyset$ for all $0 \neq i \in \mb{Z}$. 
\end{definition}

\begin{definition}
\label{def:B}
Let $X = \spec R$, where $R$ is a noetherian commutative domain which is a $k$-algebra with $k$-automorphism $\sigma: R \to R$.  
Let $\sigma: X \to X$ also denote the corresponding automorphism of $X$.  Let $Z$ be a $\sigma$-lonely closed subset of $X$ and let $Z_i = \sigma^{-i}(Z)$ for all $i \in \mb{Z}$.  Let $G$ be a pleasantly alternating cycle on the orbit of $Z$, and let $G_n$ be defined for each $n \in \mb{Z}$
as in Definition~\ref{def:Gn}.  Let $H$ and $J$ be ideals of $R$ defining closed subsets contained in $Z$.  Note that $R/\sigma^i(H)$ and $R/\sigma^i(J)$ are supported along $\sigma^{-i}(Z) = Z_i$.

Given a cycle $D = \sum a_i Z_i$ on the orbit of $Z$ with coefficients $a_i \in \{0, 1\}$, we put $H[D] =  \prod_{\{i \in \mb{Z} | a_i = 1\}} \sigma^i(H)$, and similarly for the ideal $J$.  Note that $\sigma^i(H[D]) = H[\sigma^{-i}(D)]$.
For an arbitrary cycle $D$ we put $D^+ = \max(D, 0)$.

Now we define 
\[
B(G, H, J) = \bigoplus I_n t^n \subseteq R[t, t^{-1}; \sigma],
\]
where $I_{n} = H[(-G_{n})^{+}] J[G_{n}^{+}]$ for each $n$.  
We allow the special case $H = R$ or $J = R$, and in fact we immediately have the useful 
decomposition
\[
B(G, H, J) = B(G, R, J) \cap B(G, H, R).
\]
\end{definition}

We will verify that $B(G, H, J)$ is a ring in the next result, which also gives some basic properties of these rings.  Deeper properties will be proved later in the section,  after we have studied Morita equivalence for these algebras.
\begin{lemma}
\label{lem:B-props1}
Assume the setup and notation of Definition~\ref{def:B}.  Write $G = \sum_{i=r}^s a_i Z_i$ for some $r \leq s$ with $a_r = a_s = 1$, 
and let $N = s-r$.
\begin{enumerate}
\item $B = B(G, H, J)$ is a subring of $R[t, t^{-1}; \sigma]$.

\item $B_m B_n = B_{m+n}$ for all $m,n \geq N$ and $m,n \leq -N$.   In particular, $B$ is generated as a $k$-algebra 
by $\bigoplus_{n = -m}^m B_n$, where $m = 2N-1$ if $N > 0$ and $m = 1$ if $N = 0$.
If $R$ is a finitely generated $k$-algebra, then $B$ is a finitely generated $k$-algebra.

\item $B_n B_{-n} + B_{-n} B_n$ is the unit ideal of $R = B_0$, for all $n \geq N$.

\item 
Any right (or left) ideal $I$ of $B$ containing $B_{\leq -m} \oplus B_{\geq m}$ for some $m \geq 0$ is the unit ideal of $B$.
As a consequence, any $\mb{Z}$-graded right module $M$ is generated as a module by $M_{\leq -n} \oplus M_{\geq n}$ for 
any $n \geq 0$.

\end{enumerate}
\end{lemma}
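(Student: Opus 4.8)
The plan is to establish (1)--(3) by direct combinatorial manipulation of the cycles $G_n$, and then to deduce (4) formally from (3). For (1) I would first note that $G_0 = 0$ gives $I_0 = R$, so $1 = 1\cdot t^0\in B$; then, using the decomposition $B(G,H,J) = B(G,R,J)\cap B(G,H,R)$ and the fact that an intersection of two subrings of $R[t,t^{-1};\sigma]$ is a subring, it suffices to treat the cases $H = R$ and $J = R$ separately (these being symmetric under Lemma~\ref{lem:easyGfacts}(1)). In the case $H = R$, where $I_n = J[G_n^+]$, multiplication $(at^m)(bt^n) = a\sigma^m(b)t^{m+n}$ together with the identity $\sigma^j(J[D]) = J[\sigma^{-j}(D)]$ reduces closure to the containment $J[G_m^+]\,J[\sigma^{-m}(G_n)^+]\subseteq J[G_{m+n}^+]$; this follows from Lemma~\ref{lem:easyGfacts}(2), $G_m + \sigma^{-m}(G_n) = G_{m+n}$, together with the coefficientwise inequality $\max(a,0)+\max(b,0)\ge\max(a+b,0)$ (giving $G_m^+ + \sigma^{-m}(G_n)^+\ge G_{m+n}^+$, all coefficients in $\{0,1\}$ by Lemma~\ref{G-lem}(1)) and the observation that a product of ideals each of the form $\sigma^i(J)$ lies in $J[F]$ as soon as the total index multiplicities dominate $F$. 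For (2), when $m,n\ge N$ Lemma~\ref{G-lem}(1) makes $G_m,\sigma^{-m}(G_n),G_{m+n}$ effective with coefficients in $\{0,1\}$, so $I_\ell = J[G_\ell]$; moreover $G_m$ and $\sigma^{-m}(G_n)$ must have disjoint support, since both are effective while their sum $G_{m+n}$ still has coefficients in $\{0,1\}$, whence $J[G_m]\,J[\sigma^{-m}(G_n)] = J[G_{m+n}]$, i.e. $B_mB_n = B_{m+n}$; the case $m,n\le -N$ is symmetric. Iterating $B_\ell = B_NB_{\ell-N}$ for $\ell\ge 2N$ then shows $B_{\ge N}$ is generated over $R = B_0$ by $B_N,\dots,B_{2N-1}$ (and dually for $B_{\le -N}$), giving the stated generation bound $m = 2N-1$ (and $m=1$ when $N=0$); if $R$ is finitely generated over $k$, hence noetherian, each $I_n$ is a finite $R$-module, so $B$ is a finitely generated $k$-algebra.

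For (3), fix $n\ge N$. Then $I_n = J[G_n]$, while Lemma~\ref{lem:easyGfacts}(1) together with $\sigma^j(H[D]) = H[\sigma^{-j}(D)]$ gives $I_{-n} = H[(-G_{-n})^+] = H[\sigma^n(G_n)] = \sigma^{-n}(H[G_n])$; hence $B_nB_{-n} = I_n\sigma^n(I_{-n}) = J[G_n]\,H[G_n]$ and $B_{-n}B_n = \sigma^{-n}\bigl(J[G_n]\,H[G_n]\bigr) = J[\sigma^n(G_n)]\,H[\sigma^n(G_n)]$. By Lemma~\ref{G-lem}(3) the supports of $G_n$ and $\sigma^n(G_n)$ are disjoint, and since $V(H),V(J)\subseteq Z$ the zero loci of these two ideals lie in $\bigcup_{i\in\supp G_n}Z_i$ and $\bigcup_{i\in\supp\sigma^n(G_n)}Z_i$ respectively --- disjoint closed subsets of $X$ because $Z$ is $\sigma$-lonely. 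Therefore the two ideals are comaximal and $B_nB_{-n}+B_{-n}B_n = R$.

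For (4), I would first reduce to the case $m\ge N$: if $m<N$, replace $m$ by $N$, using $B_{\le -m}\oplus B_{\ge m}\supseteq B_{\le -N}\oplus B_{\ge N}$. Then for a right ideal $I$ with $B_{\le -m}\oplus B_{\ge m}\subseteq I$, the inclusion $B_m\subseteq I$ forces $B_mB_{-m}\subseteq I$ and $B_{-m}\subseteq I$ forces $B_{-m}B_m\subseteq I$, so by (3) $R = B_mB_{-m}+B_{-m}B_m\subseteq I$ and $I = B$; left ideals are handled symmetrically. The module statement follows by applying this to $M/M'$, where $M' = (M_{\le -n}\oplus M_{\ge n})B$: given $x\in(M/M')_j$ with $|j|<n$, choosing $\ell = \max(N,\,n+|j|)$ makes $xB_\ell$ and $xB_{-\ell}$ land in degrees $\ge n$ and $\le -n$ of $M/M'$ and hence vanish, so $x = x\cdot 1\in x\bigl(B_\ell B_{-\ell}+B_{-\ell}B_\ell\bigr) = 0$ by (3); combined with $(M/M')_j = 0$ already for $|j|\ge n$, this forces $M = M'$.

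The main obstacle is part (3): the comaximality $B_nB_{-n}+B_{-n}B_n = R$ is precisely where all the hypotheses are used together --- the pleasantly alternating condition (through Lemma~\ref{G-lem}(3)) to force disjointness of the cycle supports, $\sigma$-loneliness to upgrade that to disjointness of closed subsets of $X$, and commutativity and noetherianity of $R$ to pass from disjoint closed subsets to comaximal ideals. The only other point needing care is keeping the twisting identities $\sigma^j(H[D]) = H[\sigma^{-j}(D)]$, $\sigma^j(J[D]) = J[\sigma^{-j}(D)]$ and the $(\,\cdot\,)^+$/$\max$ bookkeeping consistent throughout (1)--(3); this is routine but error-prone.
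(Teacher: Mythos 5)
Your proposal is correct and follows essentially the same route as the paper's proof: the decomposition $B(G,H,J)=B(G,R,J)\cap B(G,H,R)$ together with $G_m+\sigma^{-m}(G_n)=G_{m+n}$ and the coefficient bounds of Lemma~\ref{G-lem}(1) for parts (1)--(2), the identification $B_nB_{-n}+B_{-n}B_n=HJ[G_n]+HJ[\sigma^n(G_n)]$ made comaximal via Lemma~\ref{G-lem}(3) and $\sigma$-loneliness for part (3), and the annihilator/degree-shifting argument for part (4). Your write-up is only slightly more explicit than the paper's (e.g.\ spelling out the disjoint-support step in (2) and the choice of $\ell$ in (4)), but there is no substantive difference.
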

\begin{proof}
(1) It is enough to prove that $B(G, R, J)$ and $B(G, H, R)$ are subrings, 
and by symmetry we only need to do this only for $B = B(G, R, J)$.
We need $B_m B_n \subseteq B_{n+m}$,  or equivalently  
$J[G_{m}^+]t^m J[G_{n}^+]t^n \subseteq J[G_{m+n}^+]t^{m+n}$.  
This in turn is equivalent to $J[G_{m}^+] \sigma^m(J[G_{n}^+]) = J[G_{m}^+] J[\sigma^{-m}(G_{n}^+)] \subseteq J[G_{m+n}^+]$. 
This inclusion follows from the equation $G_{m} + \sigma^{-m}(G_{n}) = G_{m+n}$, together 
with the following observation: since all of the cycles $G_i$ have coefficients in $\{-1, 0, 1 \}$ by Lemma~\ref{G-lem}(1), 
if $G_{m+n}$ has a coefficient of $1$ along $Z_i$, then either 
$G_{m}$ or $\sigma^{-m}(G_{n})$ must have a coefficient of $1$ along $Z_i$ as well.

(2)  By Lemma~\ref{G-lem}(1), $G_{n}$ is effective for $n \geq N$, so $B_n = J[G_{n}]t^n$ for $n \geq N$.  
Then since $G_{m} + \sigma^{-m}(G_{n}) = G_{n+m}$ it follows that $B_m B_n = B_{n+m}$ for $n, m \geq N$.
The claims for negative degrees follow symmetrically.  
It is now easy to see that $B$ is generated as an algebra 
by $\bigoplus_{n =-m}^m B_n$ for the given $m$.

Since $R$ is noetherian, and each $B_i = V_i t^i$ with $V_i \subseteq R$ by definition, 
each $B_i$ is a finitely generated $R$-module.  So if $R$ is a finitely generated algebra, then clearly $B$ will be generated 
as an algebra by the union of a finite generating set for $R$ and a finite $R$-module generating set of $\bigoplus_{n = -m}^m B_n$.

(3) For  $n \geq N$ we have  $B_n = J[G_{n}]t^n$ and $B_{-n} = H[-G_{-n}]t^{-n}$.  Then 
\begin{gather*}
B_{-n} B_n + B_n B_{-n} = 
H[-G_{-n}] \sigma^{-n}(J[G_{n}]) + J[G_{n}] \sigma^n(H[-G_{-n}]) \\
= 
H[-G_{-n}]J[\sigma^n(G_{n})] + J[G_{n}] H[-\sigma^{-n}(G_{-n})] = HJ[\sigma^n(G_{n})] + HJ[G_{n}],
\end{gather*}
since $- \sigma^{-n}(G_{-n}) = G_n$.  Now because $\min(G_n, \sigma^n(G_n)) = 0$ by Lemma~\ref{G-lem} and $Z$ is $\sigma$-lonely, $HJ[\sigma^n(G_n)]$ and $HJ[G_n]$ are comaximal.

(4) The first part is immediate from (3).
For the statement about the module $M$, note that for fixed $n \geq 0$, then any $x \in M$ satisfies 
$x (B_{\leq -m} \oplus B_{\geq m}) \subseteq M_{\leq -n} \oplus M_{\geq n}$ for $m \gg 0$.  Thus any element 
of $M/(M_{\leq -n} \oplus M_{\geq n})B$ has a right annihilator which contains  $(B_{\leq -m} \oplus B_{\geq m})$
for some $m$, and hence is the unit ideal.  
\end{proof}

The next goal is to show that some of the rings $B(G, H, J)$ are Morita equivalent.  This also helps to explain the somewhat
complicated definition of these rings, as we will see explicitly how $B(G, H, J)$ arises as an endomorphism ring 
of a progenerator over a ring $B(Z, H, J)$.  In fact we will prove 
that $B(G, H, J)$ and $B(Z, H, J)$ are \emph{graded Morita equivalent}; that is, there is a 
Morita equivalence which is implemented by graded bimodules, and thus gives an equivalence of module categories that restricts 
to an equivalence of the graded module categories.

We need a few preliminary definitions and results.
For any Ore domain $A$ with Goldie quotient ring $Q(A) = D$, given right $A$-submodules $M,N \subseteq D$ we 
may identify $\Hom_A(M,N)$ with $\{ x \in D | x M \subseteq N \}$.  A similar comment holds for left submodules.  
If $A$ is a $\mb{Z}$-graded Ore domain, then the set of nonzero homogeneous elements in $A$ is easily 
checked to also satisfy the Ore condition.  The localization of $A$ at the set of nonzero homogeneous elements 
is the \emph{graded ring of fractions} $Q = Q_{{\rm gr}}(A)$.  Assuming that $A_1 \neq 0$, the graded ring of fractions has the form of a skew Laurent polynomial ring $Q_{\rm gr}(A) = D[t, t^{-1}; \sigma]$, for some division ring $D$ and $t$ of degree 1.  If  $M,N \subseteq Q$ are $\mb{Z}$-graded $A$-modules, we may identify $\Hom_A(M,N)$ with $\{ x \in Q | x M \subseteq N \}$.
\begin{lemma}
\label{lem:hom}
Assume the setup and notation from Definition~\ref{def:B}. 
Let $B, C$ be cycles on the orbit of $Z$ with 
coefficients in $\{0, 1 \}$.   Then 
\[
\Hom_R(H[B], H[C]) \cap R = H[\max(C-B, 0)].
\]
\end{lemma}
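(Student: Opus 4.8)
The plan is to recast the identity as a computation of ideal quotients and then verify it locally. Using the identification of $\Hom_R(M,N)$ with $\{x \in \operatorname{Frac}(R) \mid xM \subseteq N\}$ recalled just before the lemma (valid since $H \neq 0$, so $H[B]$ is a nonzero ideal, $R$ being a domain), and intersecting with $R$, the left-hand side becomes the colon ideal
\[
\Hom_R(H[B],H[C]) \cap R = (H[C] :_R H[B]) = \{\, x \in R \mid x\,H[B] \subseteq H[C] \,\}.
\]
(If $H = R$ the lemma is trivial, since then every $H[D] = R$; and $\sigma$-loneliness forces $Z \subsetneq X$, whence $H \neq 0$ automatically.) So it remains to prove the purely commutative statement $(H[C] :_R H[B]) = H[\max(C-B,0)]$.

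To do this I would check equality after localizing at an arbitrary prime $\p$ of $R$, which suffices for an equality of ideals. Both sides behave well under localization: $H[D] = \prod_{\{i \mid D_i = 1\}} \sigma^i(H)$ is a finite product of ideals, and the colon of the finitely generated ideal $H[B]$ commutes with localization because $R$ is noetherian. The crucial input is $\sigma$-loneliness: from $\spec R/H \subseteq Z$ one gets $\spec R/\sigma^i(H) \subseteq \sigma^{-i}(Z) = Z_i$, and the $Z_i$ are pairwise disjoint, so $\p$ can contain $\sigma^i(H)$ for at most one index $i$, say $i = i_0$ (possibly none). Hence $(\sigma^i(H))_\p = R_\p$ for every $i \neq i_0$, and after localization each of $H[B]_\p$, $H[C]_\p$, $H[\max(C-B,0)]_\p$ is either $R_\p$ or equals $\mathfrak h := (\sigma^{i_0}(H))_\p$, determined solely by the single coefficient $B_{i_0}$, $C_{i_0}$, or $\max(C_{i_0}-B_{i_0},0)$ respectively (and all three are $R_\p$ if no such $i_0$ exists, in which case there is nothing to prove).

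Finally I would dispose of the four cases $(B_{i_0}, C_{i_0}) \in \{0,1\}^2$ directly in $R_\p$: when $C_{i_0} \leq B_{i_0}$ the right side is $H[\max(C-B,0)]_\p = R_\p$, and indeed both $(R_\p :_{R_\p} \mathfrak h) = R_\p$ and $(\mathfrak h :_{R_\p} \mathfrak h) = R_\p$; when $B_{i_0} = 0$ and $C_{i_0} = 1$, both sides equal $\mathfrak h$. I do not expect a genuine obstacle here; the only points that need care are the routine facts that colons of finitely generated ideals and finite products of ideals commute with localization and that ideal equality is a local condition, plus the bookkeeping of the case in which $\p$ avoids all the $\sigma^i(H)$. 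The one genuinely load-bearing idea is the use of $\sigma$-loneliness to guarantee that no prime meets two of the translates $\spec R/\sigma^i(H)$ simultaneously, which is exactly what collapses the colon computation to the trivial one-generator case.
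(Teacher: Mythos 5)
Your proof is correct and rests on the same key idea as the paper's: $\sigma$-loneliness makes the ideals $\sigma^i(H)$ have pairwise disjoint supports, so the condition $xH[B]\subseteq H[C]$ decouples index by index and reduces to the trivial one-factor computation. The paper expresses this decoupling directly as an intersection over $i$ of single-index conditions, while you verify the resulting colon-ideal identity by localizing at each prime; the content is the same, and your localization argument simply makes explicit the step the paper asserts in one line.
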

\begin{proof}
We have $\Hom_R(H[B], H[C]) = \{ x \in K | xH[B] \subseteq H[C] \}$, where $K$ is the field of fractions of $R$.   Then since $Z$ is $\sigma$-lonely, writing $B = \sum b_i Z_i$ and $C = \sum c_i Z_i$ we have\[
\Hom_R(H[B], H[C]) \cap R = \bigcap_i \big[ \{ x \in K | x (\sigma^i(H))^{b_i} \subseteq (\sigma^i(H))^{c_i} \} \cap R \big],
\]
where an ideal to the power $0$ is interpreted to be $R$.  The $i$th term in this intersection is clearly equal to $\sigma^i(H)$ if $b_i = 0$ and $c_i = 1$, and is equal to $R$ otherwise.  
\end{proof}

\begin{lemma}
\label{lem:anti}
Assume the setup and notation from Definition~\ref{def:B}.  
Then there is an anti-automorphism $\psi: R[t, t^{-1}; \sigma] \to R[t, t^{-1}; \sigma]$ given 
by the formula $xt^n \mapsto \sigma^{-n}(x) t^{-n}$.  
The map $\psi$ restricts to an anti-isomorphism $B(G, H, R) \to B(G, R, H)$.  
\end{lemma}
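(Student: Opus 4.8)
The plan is to prove the two assertions in order. First I would check directly that $\psi$, defined on monomials by $x t^n \mapsto \sigma^{-n}(x) t^{-n}$ and extended additively, is an anti-automorphism of $S := R[t, t^{-1}; \sigma]$. Additivity and $k$-linearity are built into the definition, so the only points are anti-multiplicativity and bijectivity. Using the multiplication rule $x t^n \cdot y t^m = x\,\sigma^n(y)\, t^{n+m}$ in $S$, one computes
\[
\psi(x t^n \cdot y t^m) = \sigma^{-n-m}(x)\,\sigma^{-m}(y)\, t^{-n-m},
\qquad
\psi(y t^m)\,\psi(x t^n) = \sigma^{-m}(y)\,\sigma^{-n-m}(x)\, t^{-n-m},
\]
and these agree because $R$ is commutative; so $\psi$ is an anti-homomorphism. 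A second application of the formula gives $\psi(\psi(x t^n)) = x t^n$, so $\psi^2 = \id_S$ and $\psi$ is in particular bijective. Hence $\psi$ is an anti-automorphism of $S$.

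Next I would identify the homogeneous components of the two rings in question. By Definition~\ref{def:B}, taking $J = R$ we have $B(G,H,R)_n = H[(-G_n)^+]\, t^n$ (since $R[D] = R$ for any cycle $D$), and taking $H = R$ we have $B(G,R,H)_n = H[G_n^+]\, t^n$. Applying $\psi$ to a homogeneous component,
\[
\psi\bigl(H[(-G_n)^+]\, t^n\bigr) = \sigma^{-n}\bigl(H[(-G_n)^+]\bigr)\, t^{-n},
\]
so the claim that $\psi\bigl(B(G,H,R)\bigr) = B(G,R,H)$ reduces, component by component, to the equality of ideals $\sigma^{-n}\bigl(H[(-G_n)^+]\bigr) = H[G_{-n}^+]$ for every $n \in \mb{Z}$.

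To establish that identity I would combine three ingredients recorded earlier. From the remark $\sigma^i(H[D]) = H[\sigma^{-i}(D)]$ in Definition~\ref{def:B} we get $\sigma^{-n}\bigl(H[(-G_n)^+]\bigr) = H\bigl[\sigma^n((-G_n)^+)\bigr]$. Since $\sigma^n$ is an order-preserving automorphism of the free abelian group of cycles, it commutes with the truncation $D \mapsto D^+ = \max(D,0)$, so $\sigma^n((-G_n)^+) = \bigl(-\sigma^n(G_n)\bigr)^+$. Finally, Lemma~\ref{lem:easyGfacts}(1) gives $G_n = -\sigma^{-n}(G_{-n})$, hence $\sigma^n(G_n) = -G_{-n}$ and therefore $\bigl(-\sigma^n(G_n)\bigr)^+ = G_{-n}^+$; note that $G_{-n}^+$ and $(-G_n)^+$ have coefficients in $\{0,1\}$ by Lemma~\ref{G-lem}(1), so $H[\,\cdot\,]$ is defined on them. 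Stringing these together yields $\sigma^{-n}\bigl(H[(-G_n)^+]\bigr) = H[G_{-n}^+] $ as needed. Summing over $n$, $\psi$ maps $B(G,H,R)$ onto $B(G,R,H)$, and since $\psi$ is an anti-automorphism of the ambient ring $S$ its restriction is an anti-isomorphism $B(G,H,R) \to B(G,R,H)$.

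The argument is essentially formal; the only place requiring any care is the interplay between $\sigma^n$ and the truncation operator $(\cdot)^+$ together with the index shift in Lemma~\ref{lem:easyGfacts}(1), and this is the step I would state most carefully, but I do not anticipate a genuine obstacle.
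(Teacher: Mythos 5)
Your proposal is correct and follows essentially the same route as the paper: the paper dismisses the anti-automorphism verification as routine (which you spell out) and then establishes the second claim via exactly your chain of equalities $\psi(B(G,H,R)_n) = \sigma^{-n}(H[(-G_n)^+])t^{-n} = H[(-\sigma^n(G_n))^+]t^{-n} = H[G_{-n}^+]t^{-n} = B(G,R,H)_{-n}$, using Lemma~\ref{lem:easyGfacts}(1) and the compatibility $\sigma^i(H[D]) = H[\sigma^{-i}(D)]$. No differences worth noting.
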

\begin{proof}
That $\psi$ is an anti-automorphism of $R[t, t^{-1}; \sigma]$ is routine.

For the second statement, recall that $B(G, H, R)_n = H[(-G_n)^+]t^n$ and $B(G, R, H)_n = H[G_n^+]t^n$.  Then  
\[
\psi(B(G, H, R)_n) = \sigma^{-n}(H[(-G_n)^+])t^{-n} = H[(-\sigma^n(G_n))^+)]t^{-n} = H[(G_{-n})^+]t^{-n} = B(G, R, H)_{-n},
\]
which implies the result.
\end{proof}

We now prove our main result concerning Morita equivalence.
\begin{proposition}
\label{prop:Morita}
Assume the setup and notation from Definition~\ref{def:B}.  
Then for any $i \in \mb{Z}$ there is a graded Morita equivalence between $B(G, H, J)$ and $B(\wt{G}, H, J)$, where $\wt{G} = Z_i$ is a trivial pleasantly alternating cycle.
\end{proposition}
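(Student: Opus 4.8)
The plan is to exhibit $B(G,H,J)$ as the endomorphism ring of a graded progenerator over $C := B(\wt G, H, J)$, where $\wt G = Z_i$. As a preliminary reduction, observe that conjugation by $t^{j}$ is an automorphism of $Q := R[t,t^{-1};\sigma]$ carrying $B(G,H,J)$ onto $B(\sigma^{j}(G),H,J)$ and carrying $B(Z_i,H,J)$ onto $B(Z_{i-j},H,J)$ (just compute the image of each graded component, using $\sigma^{\pm j}(H[D]) = H[\sigma^{\mp j}(D)]$ and $(\sigma^{j}G)_n = \sigma^{j}(G_n)$); consequently all the rings $B(Z_i,H,J)$ are mutually isomorphic, so it suffices to establish the equivalence for one convenient value of $i$, and after shifting we may assume $G = \sum_{i=0}^{N} a_i Z_i$ with $a_0 = a_N = 1$. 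One then looks for a $\mb{Z}$-graded right $C$-module $P = \bigoplus_{n} P_n t^n \subseteq Q$, with each $P_n$ a fractional $R$-ideal of the shape $P_n = H[\,\cdot\,]\,J[\,\cdot\,]$ for cycles on the orbit of $Z$ manufactured out of $G_n$ and $(\wt G)_n$, such that $P$ is a finitely generated projective generator in $\rGr C$ and, under the identification of $\Hom_C(P,P)$ with $\{q \in Q \mid qP \subseteq P\}$, one has $\End_C(P) = B(G,H,J)$. Because $P \subseteq Q$ is then a graded $(B(G,H,J),C)$-bimodule, the resulting Morita equivalence is automatically implemented by graded bimodules, hence restricts to an equivalence $\rGr C \simeq \rGr B(G,H,J)$, which is what is required.

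The verification has three parts. First, $PC \subseteq P$: this is a cycle computation of exactly the type already carried out for $B$ in the proof of Lemma~\ref{lem:B-props1}(1), relying on the additivity $G_m + \sigma^{-m}(G_n) = G_{m+n}$ of Lemma~\ref{lem:easyGfacts}(2), on the fact that every $G_n$ and $(\wt G)_n$ has coefficients in $\{-1,0,1\}$ (Lemma~\ref{G-lem}(1)), and on the $\sigma$-loneliness of $Z$, which---as in Lemma~\ref{lem:hom}---lets the ideals $H[D]$, $J[D]$ be handled one coordinate $Z_i$ at a time. Second, $\End_C(P) = B(G,H,J)$: one has $\End_C(P)_n = \bigcap_{m} \Hom_R(\sigma^{n}(P_m),\,P_{m+n})$, and by Lemma~\ref{lem:hom} each term of this intersection is again of the form $H[\,\cdot\,]J[\,\cdot\,]$, with cycle data obtained from suitable $\max$'s of $G_{m+n}$ and of the shift $\sigma^{-m}(G_n)$. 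The crux is that this infinite intersection collapses to exactly $H[(-G_n)^+]\,J[(G_n)^+] = B(G,H,J)_n$, and this is precisely where the pleasantly alternating hypothesis enters: the stabilisation of the coefficients of $G_n$ as $n \to \pm\infty$ (Lemma~\ref{G-lem}(2)) guarantees the intersection is attained by finitely many terms, and the disjointness $\min(G_n,\sigma^{n}(G_n)) = 0$ for $n \ge N$ (Lemma~\ref{G-lem}(3)), together with its converse Lemma~\ref{lem:converse}, guarantees that nothing strictly larger than $B(G,H,J)_n$ survives. Third, $P$ is a finitely generated projective generator over $C$: finite generation follows as in Lemma~\ref{lem:B-props1}(2),(4), since $R$ is noetherian (so each $P_n$ is a finite $R$-module) and, by an argument parallel to Lemma~\ref{lem:B-props1}(2), $P$ is generated over $C$ by finitely many of its homogeneous components. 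Projectivity and the generator property are checked locally on $X = \spec R$: away from $\bigcup_i Z_i$ the module $P$ agrees up to a degree shift with $C$ itself, while at a point of a single $Z_i$ the loneliness of $Z$ decouples the contributions of the remaining $Z_j$ and reduces the question to a rank-one local calculation. The same comaximality $\min(G_n,\sigma^{n}(G_n)) = 0$ that was used in Lemma~\ref{lem:B-props1}(3) is what makes $P$ locally a direct summand of a free module and makes $C$ a direct summand of a finite direct sum of degree shifts of $P$.

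The main obstacle is the second part---more precisely, guessing the combinatorially correct module $P$: the fractional ideals $P_n$ have to interpolate between the $B(G,H,J)$-pattern and the (much simpler) $B(\wt G,H,J)$-pattern in just the right way, and one must then prove that the infinite intersection defining $\End_C(P)_n$ returns $B(G,H,J)_n$ on the nose and nothing larger. The lemmas of this section are arranged so that this collapse reduces to the purely combinatorial statements about pleasantly alternating cycles in Lemmas~\ref{G-lem} and \ref{lem:converse}, but the bookkeeping is the real content of the argument. One shortcut worth trying first is to use the decomposition $B(G,H,J) = B(G,R,J) \cap B(G,H,R)$ of Definition~\ref{def:B} and the anti-isomorphism $\psi$ of Lemma~\ref{lem:anti} to reduce to the one-sided case $J = R$, where $C$ is simply $R$ in all nonpositive degrees and the module $P$ should be correspondingly easier to write down, and then to glue the two sides back together.
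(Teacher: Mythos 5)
Your overall strategy is the same as the paper's (realize $B(G,H,J)$ as the graded endomorphism ring of a suitable graded module over $B(\wt{G},H,J)$, after reducing via the decomposition $B(G,H,J)=B(G,R,J)\cap B(G,H,R)$ and the anti-isomorphism $\psi$ of Lemma~\ref{lem:anti}), but there is a genuine gap: the module $P$ is never produced, and producing it is the real content of the proof. The paper writes it down explicitly: for $J=R$ and $\wt{G}=Z_0$ it takes the right ideals $L^{(j)}$ of $A=B(\wt{G},H,R)$ with graded pieces $H[E_n+Z_j]t^n$ for $n\le j$ (and $=A_n$ for $n>j$), sets $L=\bigcap_{j\in S}L^{(j)}$ for a finite set $S\subseteq\mb{N}$, and computes $\End_A(L)=B(G,H,R)$ where $G=\wt{G}+\sigma^{-1}(D)-D$ with $D=\sum_{j\in S}Z_j$; the observation that every pleasantly alternating cycle arises as $\wt{G}+\sigma^{-1}(D)-D$ for some finite $S$ (after an index shift) is exactly the combinatorial interpolation you describe as ``the main obstacle'' and leave unresolved. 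Saying that the lemmas of the section should make the collapse of $\bigcap_m\Hom_R(\sigma^n(P_m),P_{m+n})$ come out to $B(G,H,J)_n$ is not a proof until a candidate $P$ is specified, and for the generic candidate it is false.

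Two further steps of your outline would not go through as stated. First, you propose to check that $P$ is a projective generator ``locally on $X=\spec R$''; but $P$ is a module over the noncommutative ring $C=B(\wt{G},H,J)$, and local freeness of the graded pieces as $R$-modules does not yield projectivity or the generator property over $C$. The paper avoids this entirely: it sets $M=\Hom_A(L,A)$, computes both $L$ and $M$ explicitly, and shows $ML=A$ and $LM=B$, i.e.\ a Morita context with both trace maps surjective, citing \cite[Corollary 3.5.4]{MR}; no direct projectivity argument is made or needed. Second, ``gluing the two sides back together'' after treating $J=R$ and $H=R$ separately is not automatic: in the mixed case the paper must work with $N=L\cap M'$ over $A''=A\cap A'$, prove the comaximality identities \eqref{eq:ML} for $n\gg 0$ by explicit formulas for $L_{-n},M_{-n},L'_n,M'_n$, and then argue via Lemma~\ref{lem:B-props1}(4) that $\End_{A''}(N)=\End_A(L)\cap\End_{A'}(M')=B(G,H,J)$. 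This gluing is a substantive part of the argument, not a formal consequence of the two one-sided cases. (Your preliminary reduction by conjugation by powers of $t$ is fine in spirit, up to a sign in the index, and matches the paper's index-shifting remark.)
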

\begin{proof}
We concentrate first on the case where $J = R$.
Let $A = B(\wt{G}, H, R)$, where $\wt{G} = Z = Z_0$.   Let $E_n = \max(-\wt{G}_n, 0)$, where $\wt{G}_n$ is defined 
in terms of $\wt{G}$ as in Definition~\ref{def:Gn}.   Then $A = \bigoplus_{n \in \mb{Z}} H[E_n] t^n$, where $E_n = 0$ for $n \geq 0$ and $E_{n} = Z_{n} + \dots + Z_{-1}$ for $n \leq -1$.

For each $j \geq 0$ we define a right ideal $L^{(j)}$ of $A$.  Define  $F^{(j)}_n = E_n + Z_j$, for $n \leq j$, and $F^{(j)}_n = E_n = 0$ for $n > j$.  Then let $L^{(j)} = \bigoplus_{n \in \mb{Z}} H[F^{(j)}_n] t^n$.   To see that $L^{(j)}$ is a right ideal, 
note that $A$ is generated in degrees $-1, 0,$ and $1$, by Lemma~\ref{lem:B-props1}(2).
Obviously each graded piece of $L^{(j)}$ is an $A_0 = R$-module.  
  The condition $L^{(j)}_n A_1 \subseteq L^{(j)}_{n+1}$ translates to $H[F^{(j)}_n] R \subseteq H[F^{(j)}_{n+1}]$, 
which is clear since $F^{(j)}_n \geq F^{(j)}_{n+1}$ for all $n$.    The condition $L^{(j)}_{n+1} A_{-1} \subseteq L^{(j)}_n$ 
requires $H[F^{(j)}_{n+1}] \sigma^{n+1}(\sigma^{-1}(H)) \subseteq H[F^{(j)}_{n}]$, 
which is true as long as the relation on divisors $F^{(j)}_{n+1} + Z_n \geq F^{(j)}_n$ holds.  For $n < 0$ this relation holds 
since $E_{n+1} + Z_j + Z_n = E_n + Z_j$.  The only other nontrivial case is $n = j$, for which the relation holds since $0 + Z_j \geq Z_j$.  Now let $S \subseteq \mb{N}$ be any finite set of nonnegative integers and define $L = \bigcap_{j \in S} L^{(j)}$.  Let $F_n = E_n + \sum_{\{j \in S | n \leq j \}} Z_j$ for all $n$, so we also have  $L = \bigoplus_n H[F_n] t^n$.

We compute $B = \operatorname{End}_A(L_A) = \{ x \in K[t, t^{-1}; \sigma] \, \big| \, x L \subseteq L \}$.    
By Lemma~\ref{lem:B-props1}(4), fixing any $n \geq 0$, then $L$ is generated by $L_{\leq -n} \oplus L_{\geq n}$.
Thus $x \in B$ if and only if $xL_{\leq n} \subseteq L$ for all $n \ll 0$ and $x L_{\geq n} \subseteq L$ for all $n \gg 0$.   
Write $D = \sum_{j \in S} Z_j$.
If $x = ft^m \in B_m$, then the equation $xL_n \subseteq L$ translates to $f\sigma^m(R) \subseteq R$ as long as 
$n, m+n > \max(S)$.  This is equivalent to $f \in R$.  On the other hand, the equation $x L_n \subseteq L$ 
translates to $f\sigma^m(H[E_n + D]) \subseteq H[E_{n+m} + D]$ as long as $n, n+m < 0$.  This is equivalent 
to $f \in \Hom_R ( H[\sigma^{-m}(E_n) + \sigma^{-m}(D)], H[E_{n+m} + D])$.  
By Lemma~\ref{lem:hom}, for $n \ll 0$ we have both $xL_{-n} \subseteq L$ and $xL_n \subseteq L$
if and only if $f \in H[C_m]$ where 
\[
C_m = \max(E_{n+m} + D - \sigma^{-m}(E_n) - \sigma^{-m}(D), 0) = \max(-\wt{G}_m + D - \sigma^{-m}(D), 0),
\]
where we use that for any $n \ll 0$, 
$E_{n+m} - \sigma^{-m}(E_n) = -\wt{G}_{n+m} + \sigma^{-m}(\wt{G}_n) = -\wt{G}_m$.  
Since $C_m$ is independent of $n \ll 0$, we see that $B_m = H[C_m]t^m$.
Now defining $G = Z + \sigma^{-1}(D) - D = \wt{G} + \sigma^{-1}(D) - D$, we have 
$G_n = \wt{G}_n + \sigma^{-n}(D) - D$ for all $n \in \mb{Z}$, where $G_n$ is defined in terms of $G$ as in Definition~\ref{def:Gn}.  In other words, $C_n = \max(-G_n, 0) = (-G_n)^+$, and $B = B(G, H, R)$ by definition.

Next, consider the left $A$-module $M = \Hom_A(L, A)$.  A very similar calculation to the one in the previous paragraph 
shows that $M_m = H[Y_m]t^m$ for 
$Y_m = \max(-\wt{G}_m - \sigma^{-m}(D), 0)$.
Note that $L_n = M_n = A_n = Rt^n$ for all $n \gg 0$.  It follows that the ideal $ML$ of $A$ contains $A_n = Rt^n$ for 
all $n \gg 0$, and so $ML$ is the unit ideal of $A$ by Lemma~\ref{lem:B-props1}(3).  Similarly, $LM$ is the unit ideal of $B$.
This shows that the bimodules 
$_B L_A$ and $_A M_B$ give a Morita equivalence between $A = B(\wt{G}, H, R)$ and $B = (G, H, R)$ \cite[Corollary 3.5.4]{MR}.  Since these bimodules are graded, it is a graded Morita equivalence.  

The above results hold just as well, of course, using the ideal $J$ in place of $H$.  Let $\wh{A} = B(\wt{G}, J, R)$, $\wh{L} = \bigoplus_{n \in \mb{Z}} J[F_n]t^n$, and $\wh{M} = \Hom_{\wh{A}}(\wh{L}, \wh{A})$.
By Lemma~\ref{lem:anti}, the anti-isomorphism $\psi$ of $R[t, t^{-1}; \sigma]$ restricts to an anti-isomorphism $\wh{A} = B(\wt{G}, J, R) \to B(\wt{G}, R, J)$.  Thus $L' = \psi(\wh{L})$ is a left ideal, and $M' = \psi(\wh{M})$ is a right module, over the ring $A' = B(\wt{G}, R, J)$.
Moreover,  $B' = \End({}_{A'} L') = \psi(\End(L_A)) = \psi(B(G, J, R)) = B(G, R, J)$, using Lemma~\ref{lem:anti} again.
We have $M'_{n} = L'_{n} = A'_n = Rt^{n}$ for all $n \ll 0$.   The bimodules  $_{A'} L'_{B'}$ and $_{B'} M'_{A'}$ must satisfy $L'M' = A'$ and $M'L' = B'$ and thus give a graded Morita equivalence between $A'$ and $B'$.  Note that $\End_{A'}(M'_{A'}) = B'$ as well.

To prove the result in general, we consider $A'' = B(\wt{G}, H, J) = B(\wt{G}, H, R) \cap B(\wt{G}, R, J) = A \cap A'$.
Since $L$ is a right ideal of $A$ and $M'$ is a right module over $A'$, $N = L \cap M'$ is a right module over $A''$.  Let $P = \Hom_{A''}(N, A'')$ and put $B'' = \End_{A''}(N)$.   
We claim now that 
\begin{equation}
\label{eq:ML}
M_{-n} M'_n + L'_n L_{-n} = R,\ \ \ \text{and}\ \ \   M'_n M_{-n} + L_{-n} L'_n = R, \ \ \text{for all}\ n \gg 0. 
\end{equation}
We show only the first equation; the proof of the second is similar.  The calculations above give the following explicit formulas for $n \gg 0$:
$L_{-n} = H[Z_{-n} + \dots + Z_{-1} + D]t^{-n}$, and $M_{-n} = H[Z_{-n} + \dots + Z_{-1} - \sigma^n(D)]t^{-n}$.
Using the formula for the anti-isomorphism $\psi$, we immediately get the following corresponding formulas for $n \gg 0$: 
$L'_n = J[Z_0 + \dots + Z_{n-1} + \sigma^{-n}(D)]t^n$, and $M'_n = J[Z_0 + \dots + Z_{n-1} - D]t^n$.
Now for $n \gg 0$, 
\[
M_{-n} M'_n= H[Z_{-n} + \dots + Z_{-1} - \sigma^n(D)]\sigma^{-n}(J[Z_0 + \dots + Z_{n-1} - D)]) = 
HJ[Z_{-n} + \dots + Z_{-1} - \sigma^n(D)],
\]
while 
\[
L'_n L_{-n} = J[Z_0 + \dots + Z_{n-1} + \sigma^{-n}(D)] \sigma^n(H[Z_{-n} + \dots + Z_{-1} + D)]) = 
HJ[Z_0 + \dots + Z_{n-1} + \sigma^{-n}(D)].  
\]
For $n \gg 0$ we have $\min(Z_{-n} + \dots + Z_{-1} - \sigma^n(D), Z_0 + \dots + Z_{n-1} + \sigma^{-n}(D)) = 0$, so 
the ideals $M_{-n} M'_n$ and $L'_n L_{-n}$ are comaximal in $R$ as claimed. 

Now note that for $n \gg 0$ we have $(L' \cap M)_{-n} = M_{-n}$, $(L' \cap M)_n = L'_n$, $N_{-n} = L_{-n}$, and $N_{n} = M'_n$.
Since $L' \cap M \subseteq P$, it 
follows easily from \eqref{eq:ML} that $PN = A''$ and $NP = B''$.   
Thus the rings $A'' = B(\wt{G}, H, J)$ and $B'' = \End_{A''}(N)$ are graded Morita equivalent.  
To calculate $B''$, note first that setting $U = R[t, t^{-1}; \sigma]$, then $NU$ is the unit ideal of $U$, since  $1 \in NP \subseteq NU$.
Thus $\End_{A''}(N) \subseteq \End_{U}(NU) = U$.   This implies that if $x \in \End_{A''}(N)$, 
then the conditions $x L_n \subseteq L$ and $x M'_{-n} \subseteq M'$ are trivially satisfied 
for $n \gg 0$.   Since $N_{\leq -n} = L_{\leq -n}$  and $N_{\geq n} = M'_{\geq n}$ for $n \gg 0$, 
we also have $x L_{\leq -n} \subseteq L$ and $x M'_{\geq n} \subseteq M'$ for $n \gg 0$.
Thus $x \in \End_A(L)$ and $x \in \End_{A'}(M')$ by Lemma~\ref{lem:B-props1}(4).
It follows that $\End_{A''}(N) \subseteq \End_A(L) \cap \End_{A'}(M')$, and the reverse inclusion is obvious.
Thus $\End_{A''}(N) = B'' = B(G, H, R) \cap B(G, R, J) = B(G, H, J)$.   

To finish the proof of the proposition, we note that 
as $S$ varies over all finite subsets of $\mb{N}$, the cycles $G = Z + \sigma^{-1}(D) - D$ obtained above 
with $D = \sum_{j \in S} Z_j$ attain every pleasantly alternating cycle supported on the $Z_i$ with $i \geq 0$.  
Explicitly, if $\sum_{i = m}^n a_i Z_i$ is pleasantly alternating with $0 \leq m \leq n$ and $a_m = a_n = 1$, then 
take $S = \{ i \geq 0 | \sum_{j \leq i} a_j = 0 \}$.
However, we can also shift indices and do the whole argument above beginning with some $\wt{G} = Z_a$ with $a \leq 0$ instead, and obtain that all rings $B(G, H, J)$ 
with $G$ pleasantly alternating and supported along the $Z_a$ with $i \geq a$ are Morita equivalent.  
In this way we get the desired result for all pleasantly alternating cycles $G$ supported along the orbit of $Z$.
\end{proof}

In the last main result of this section we give some further important properties of the rings $B(G, H, J)$.  In particular, under certain conditions these rings are simple and noetherian.  We first recall some definitions, all of which have played a role in the past study of 
birationally commutative $\mb{N}$-graded algebras, and continue to be important in the $\mb{Z}$-graded setting.
\begin{definition}
We say that an automorphism $\sigma: X \to X$ of a scheme $X$ is \emph{wild} if the only closed subsets $Z \subseteq X$ 
with $\sigma(Z) =  Z$ are $\emptyset$ and $X$.
We say that a ring $R$ with automorphism $\sigma: R \to R$ is \emph{$\sigma$-simple} if the only 
ideals $I$ of $R$ with $\sigma(I) =  I$ are $0$ and $R$.  
It is easy to check that if $R$ is a commutative noetherian ring with automorphism $\sigma$, then $R$ is $\sigma$-simple 
if and only if $\sigma: X \to X$ is wild, where $X = \spec R$ and $\sigma$ is the induced automorphism.
\end{definition}
\noindent 
The only finite type affine varieties with wild automorphisms we know are certain commutative algebraic groups with translation automorphisms; see Remark~\ref{rem:wild} below.   

\begin{definition}
Let $X$ be a variety.  A collection of distinct closed subsets $\{ W_{\alpha} \}$ of $X$ is \emph{critically dense} 
if for any proper closed subset $Y \subsetneq X$, one has $Y \cap W_{\alpha} = \emptyset$ for all but finitely many $\alpha$.
\end{definition}
\begin{definition}
A $k$-algebra $A$ is \emph{strongly noetherian} if $A \otimes_k C$ is noetherian for all noetherian commutative $k$-algebras $C$.
\end{definition}
\noindent
The strong noetherian property arises in the theory of noncommutative Hilbert schemes \cite{AZ}, where it gives a sufficient 
condition for the Hilbert schemes over an $\mb{N}$-graded ring to be projective schemes.  Though many familiar algebras 
are strongly noetherian, there are also numerous examples of rings which are noetherian but fail to be strongly noetherian.  
Whether or not the strong noetherian property holds is an important question for any ring related to noncommutative geometry.

\begin{proposition} 
\label{prop:Bsimp}
Assume the setup and notation of Definition~\ref{def:B}.
\begin{enumerate}
\item If $\{ \sigma^i(Z) | i \in \mb{Z} \}$ is critically dense in $X$, then $B(G, H, J)$ is noetherian.

\item Suppose that $\{ \sigma^i(Z) | i \in \mb{Z} \}$ is critically dense in $X$, that $R$ is a finitely generated regular $k$-algebra, and that $Z$ has codimension at least $2$ in $X$.  Then $B(G, H, J)$ is noetherian but not strongly noetherian.

\item If $\sigma$ is a wild automorphism of $X$, then $B(G, H, J)$ is a simple ring.
\end{enumerate}
\end{proposition}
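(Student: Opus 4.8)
The plan is to prove all three statements for $B = B(Z,H,J)$ where $Z = Z_0$ is a \emph{trivial} pleasantly alternating cycle. By Proposition~\ref{prop:Morita}, $B(G,H,J)$ is graded Morita equivalent to this ring, hence Morita equivalent, and it remains so after any base change $-\otimes_k C$ (the progenerator base-changes to a progenerator); since simplicity, the noetherian property, and the strong noetherian property are all Morita invariant, it suffices to treat $B = B(Z,H,J)$. For this $B$ we have $N = 0$, so Definition~\ref{def:B} collapses to $B_n = (J\sigma(J)\cdots\sigma^{n-1}(J))t^n$ for $n \ge 0$ and $B_n = (\sigma^{-1}(H)\cdots\sigma^{n}(H))t^n$ for $n \le 0$; thus $B^{\ge 0} = B(Z,R,J)^{\ge 0}$ and $B^{\le 0} = B(Z,H,R)^{\le 0}$, the latter being (via the anti-isomorphism $\psi$ of Lemma~\ref{lem:anti}) isomorphic to $(B(Z,R,H)^{\ge 0})^{\mathrm{op}}$. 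I also record that wildness of $\sigma$ makes $R$ $\sigma$-simple and, barring the degenerate case in which $R$ is a field (which forces $Z = \emptyset$, $H = J = R$, and nothing to prove), forces $\sigma$ to have infinite order, since otherwise the finite $\sigma$-orbit of a closed point would be a proper nonempty $\sigma$-stable closed subset. Finally, $B$ is a domain with all $B_n \ne 0$.

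\emph{Part (3).} Let $0 \ne I \trianglelefteq B$ be two-sided. (i) \emph{$I \cap R \ne 0$:} choose $0 \ne x \in I$ of minimal \emph{width} $w$ (top degree minus bottom degree among nonzero homogeneous components); multiplying on the left by a nonzero element of the appropriate $B_{-a}$ we may take $x = x_0 + x_1 t + \cdots + x_w t^w$ with $x_0, x_w \ne 0$. If $w \ge 1$, pick $r \in R$ with $\sigma^w(r) \ne r$; then $rx - xr \in I$ is nonzero (its degree-$w$ coefficient is $(r - \sigma^w(r))x_w$) but has vanishing degree-$0$ coefficient, so its width is $< w$, a contradiction. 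So $w = 0$ and, translating to degree $0$, $\mathfrak a := I \cap R$ is a nonzero ideal of $R$. (ii) \emph{$\sum_{j \ge M}\sigma^j(\mathfrak a) = R$ for every $M$:} since $\sigma(\sum_{j\ge 0}\sigma^j(\mathfrak a)) = \sum_{j\ge 1}\sigma^j(\mathfrak a)$, it suffices to treat $M = 0$. If $\sum_{j \ge 0}\sigma^j(\mathfrak a)$ lay in a maximal ideal $\mathfrak m$, then $\mathfrak a \subseteq \bigcap_{j\ge 0}\sigma^{-j}(\mathfrak m) =: \mathfrak n$, and the chain $\mathfrak n \subseteq \sigma^{-1}(\mathfrak n) \subseteq \cdots$ is ascending, hence (Noetherianity) stabilizes to a $\sigma$-stable ideal containing $\mathfrak a \ne 0$, which therefore equals $R$, contradicting its containment in the proper ideal $\sigma^{-N}(\mathfrak m)$. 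Likewise $\sum_{j \le M}\sigma^j(\mathfrak a) = R$. (iii) \emph{Conclusion:} for $0 \le j \le m$ we have $B_j B_{m-j} = B_m$ (Lemma~\ref{lem:B-props1}(2), $N = 0$), so $B_j \mathfrak a B_{m-j} = I_m \sigma^j(\mathfrak a) t^m \subseteq I$; summing over $j$ and using (ii) to get $\sum_{j=0}^m \sigma^j(\mathfrak a) = R$ for $m \gg 0$ yields $B_m = I_m t^m \subseteq I$ for all $m \gg 0$, and symmetrically for $m \ll 0$. Hence $I$ contains $B_{\le -L} \oplus B_{\ge L}$ for some $L$, so $I = B$ by Lemma~\ref{lem:B-props1}(4). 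The delicate steps are (i) and the infinite-order observation.

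\emph{Parts (1) and (2).} Critical density of $\{\sigma^i(Z)\}$ passes to the orbits of $V(J) \subseteq Z$ and $V(H) \subseteq Z$. Then $B^{\ge 0}$ and $B^{\le 0}$ are (an opposite of) $\mathbb{N}$-graded ``\naive{} blowup'' algebras $\bigoplus_{n \ge 0}(J\sigma(J)\cdots\sigma^{n-1}(J))t^n$ attached to critically dense orbits, and these are right and left noetherian by the established theory of such algebras (in the spirit of Artin--Stafford, \cite{AS}). For part (1) one then glues: $B$ is generated in degrees $-1, 0, 1$ over the noetherian ring $R$ (Lemma~\ref{lem:B-props1}(2)), with $B_{\pm 1}$ finitely generated $R$-modules and $B_1 B_{-1} + B_{-1}B_1 = R$ (Lemma~\ref{lem:B-props1}(3)); a $\mathbb{Z}$-graded ring with these properties is right (and left) noetherian as soon as its $\ge 0$ and $\le 0$ parts are, by the same mechanism that makes generalized Weyl algebras inherit noetherianity from their base. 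For part (2), noetherianity is part (1); for the failure of the strong noetherian property, regularity of $R$ together with $\operatorname{codim}_X Z \ge 2$ forces each $I_n$ to be non-invertible, so $B^{\ge 0}$ is a genuine \naive{} blowup at a center of codimension $\ge 2$ --- the situation in which such algebras fail to be strongly noetherian (the Keeler--Rogalski--Stafford pathology; cf.\ \cite{RS}): for a suitable Noetherian commutative $C$ (a DVR, or the coordinate ring of a curve through a point of $Z$) the base change acquires a non-finitely-generated right ideal, because the family of truncated point modules supported along the orbit of $Z$ fails to spread out to a finite-type family, contrary to what the strong noetherian property would force via \cite{AZ}; one transfers this obstruction from $B^{\ge 0}$ to $B$. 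In codimension $1$, by contrast, regularity makes the $I_n$ invertible and $B$ a twisted homogeneous coordinate ring of a line bundle, which is strongly noetherian --- so the codimension hypothesis is essential. I expect the main obstacles to be the gluing step in (1), the precise affine form of the noetherianity input for \naive{} blowups, and the transfer of the codimension-$\ge 2$ obstruction to $B$ in (2).
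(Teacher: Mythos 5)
Your reduction via Proposition~\ref{prop:Morita} to the case of a trivial cycle $G=Z$ is exactly the paper's first move in all three parts, and your observation that the Morita progenerator base-changes (so that the strong noetherian property is Morita invariant) is a cleaner justification of something the paper only asserts is ``straightforward to check.''

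Part (3) is correct and takes a genuinely different route from the paper. The paper works only with \emph{graded} ideals: it shows by a support/DCC argument (using wildness to kill the ``extra'' irreducible components of $\supp R/V_n$) that every nonzero homogeneous ideal of $B_{\geq 0}$ contains a tail $B_{\geq 2n}$, then invokes Lemma~\ref{lem:B-props1}(3) together with the general fact that graded-simple plus simple graded quotient ring implies simple (citing \cite{BRS}). You instead treat an arbitrary two-sided ideal $I$ directly: the minimal-width commutator trick produces a nonzero element of $I\cap R$, and $\sigma$-simplicity of $R$ (your stabilizing-chain argument that $\sum_{j\geq M}\sigma^j(\mathfrak{a})=R$ is fine) spreads this out to a tail via $B_jB_{m-j}=B_m$. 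This is more elementary and self-contained. The one caveat is that your commutator step needs $\sigma^w\neq\mathrm{id}$ for all $w\geq 1$; you extract this from wildness plus $R$ not being a field, whereas in the paper's setup it already follows from $Z$ being a nonempty $\sigma$-lonely set. Either way the degenerate cases you flag are genuinely excluded by the intended reading of Definition~\ref{def:B}.

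Parts (1) and (2) have the right skeleton but leave the two essential technical steps as black boxes. For (1), the paper does not merely cite ``the established theory of \naive{} blowups'': since the published results (\cite{KRS}) are stated for projective schemes, it reproduces the argument of \cite[Proposition 3.10]{KRS} in the affine setting, showing for a graded right ideal $I=\sum V_nt^n$ of $B_{\geq 0}$ that $I_nA_1=I_{n+1}$ for $n\gg 0$, by using critical density to make $V_r$ comaximal with almost all $\sigma^j(J)$ and then running the stabilization of the ideals $F_n=V_m+V_n$. That affine verification is a genuine part of the proof which your proposal omits (your gluing step, by contrast, is fine and matches the paper's ``easy to see''). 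For (2), the mechanism you gesture at --- failure of truncated point modules to spread out, via \cite{AZ} --- is not the one the paper uses, and you do not say how the obstruction transfers from $B^{\geq 0}$ to $B$. The paper argues directly with $B$ itself: it shows $B$, viewed as a $B\otimes_k R$-module, is not generically flat over $R$, because if $(B_n)_f$ were flat then $V(f)$ would have to contain $V(\sigma^i(J))$ for each $0\leq i\leq n-1$ (a flat ideal of height $\geq 2$ in a regular local ring would be principal, which is impossible), while critical density forces $V(f)\cap V(\sigma^i(J))=\emptyset$ for almost all $i$; then \cite[Theorem 0.1]{ASZ} yields that $B\otimes_kR$, and hence $B$, is not strongly noetherian. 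As written, your part (2) is a plan rather than a proof.
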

\begin{proof}
(1) We follow the ideas of \cite{KRS}.  Since the noetherian property is Morita invariant, by Proposition~\ref{prop:Morita} we can pass to the case that $G = Z$.  Then $A = B_{\geq 0}$ is generated by $B_0 = R$ and $B_1 = Jt$, by Lemma~\ref{lem:B-props1}(2), and 
$B_n = J\sigma(J) \dots \sigma^{n-1}(J) t^n$ for $n \geq 1$.  

Let $I$ be a right ideal of $A$, and write $I = \sum_{n \geq 0} V_n t^n$ where each $V_n$ is an ideal of $R$.  Using $I_n A_1 \subseteq I_{n+1}$ we have 
\begin{equation}
\label{eq:V}
V_n \sigma^n(J) \subseteq V_{n+1} \subseteq  J\sigma(J) \dots \sigma^{n-1}(J) \sigma^n(J).
\end{equation}
We claim now that $I_n A_1 = I_{n+1}$, for $n \gg 0$.  The proof is essentially the same as in  \cite[Proposition 3.10]{KRS}, 
but since the notation there requires some translation we give the proof here for the convenience of the reader.
We may assume that $I \neq 0$.  Choose $r$ so that $I_r \neq 0$.  The critical density of the set $\{Z_n \}$ implies 
that $V_r  + \sigma^i(J) = R$ for all but finitely many $i$.  Thus there exists $m \geq r$ such that 
$V_r + \sigma^j(J) = R$ for all $j \geq m$.   For $n > m$ set $L_n = \sigma^m(J) \cdots \sigma^{n-1}(J)$. 
By the choice of $m$ and \eqref{eq:V}, 
$V_m$ and $\sigma^j(J)$ are also comaximal for $j \geq m$, so $V_m$ and $L_n$ are comaximal for any $n > m$.
Thus \eqref{eq:V} and induction implies that $V_m \cap L_n = V_m L_n \subseteq V_n \subseteq L_n$.  
An easy calculation now shows that $F_n = V_m + V_n$ is uniquely maximal among ideals $K$ such that $K L_n \subseteq V_n$, 
and that $F_n L_n = F_n \cap L_n = V_n$.  Since $F_n L_{n+1} = F_n L_n \sigma^n(J) \subseteq V_n \sigma^n(J) \subseteq V_{n+1}$, 
we get $F_n \subseteq F_{n+1}$ for all $n \geq m$.   Choosing $n_0$ so that $F_n = F_{n+1}$ for $n \geq n_0$, 
we see that $V_{n+1} = F_n L_{n+1} = V_n \sigma^n(J)$, or equivalently $I_n A_1 = I_{n+1}$, for $n \gg 0$, as needed.
Now since each graded piece of $I$ 
is a finitely generated $R$-module, clearly this shows that every right ideal of $A$ is finitely generated.  

A similar proof shows that every left ideal of $A$ is finitely generated, so that $A$ is noetherian.  An analogous proof also shows 
that $B_{\leq 0}$ is noetherian, or one may use Lemma~\ref{lem:anti} to reduce to the positively graded case.
Finally, it is easy to see that since $B_{\geq 0}$ and $B_{\leq 0}$ are both noetherian, then $B$ is noetherian.

(2)  This is similar to the proof of \cite[Theorem 9.2]{KRS}.   The noetherian property holds for $B(G, H, J)$ by part (1).
It is straightforward to check that the strong noetherian property is also Morita invariant, so it is enough to prove 
that $B = B(Z, H, J)$ is not strongly noetherian.  Consider $B$ as an $(B, R)$-bimodule, where $R = B_0$ acts on the right  by restriction.  Equivalently, $B$ is a left $B \otimes_k R$-module.  We claim that $B$ is not a generically flat $R$-module, in other words there is no single element $0 \neq f \in R$ such that $B_f$ is a flat $R_f$-module.   Note that $B_f$ is flat if and only if $(B_n)_f$ is for each $n \in \mb{Z}$.

Suppose that $I$ is an ideal of $R$ defining a closed subset $V = V(I)$ of codimension at least $2$, and suppose that $I_f$ is flat 
where $0 \neq f \in R$.  If $\mf{m}$ is a maximal ideal 
containing $I$, and $f \not \in \mf{m}$, then $I_{\mf{m}}$ is flat also, but $R_{\mf{m}}$ is regular local and since $\spec R_{\mf{m}}/I_{\mf{m}}$ still has codimension $2$ in $\spec R_{\mf{m}}$, the ideal $I_{\mf{m}}$ cannot be principal, a contradiction.   Thus $V(f)$ must contain $V(I)$.  
Now $B_n = [J \sigma(J) \dots \sigma^{n-1}(J)]t^n$ and by critical density any $0 \neq f$ has $V(f) \cap V(\sigma^i(J)) = \emptyset$ 
for all but finitely many $i$.  In particular, $(B_n)_f$ is not flat for $n \gg 0$ and so $B$ is not generally flat over $R$, as claimed.
 
Since $B$ is a finitely generated $B \otimes_k R$-module which is not generically flat over $R$, \cite[Theorem 0.1]{ASZ}
implies that $B \otimes_k R$ is not strongly right noetherian.  Then since $R$ is commutative affine, this implies that $B$ is not strongly right noetherian.  The same argument proves that $B$ is not strongly left noetherian. 

(3)  Since simplicity is a Morita invariant property, we may pass to the Morita equivalent ring $B = B(Z, H, J)$.  Let $A = B_{\geq 0}$.  We claim that every nonzero homgeneous ideal of $A$ contains $A_{\geq n}$ for some $n \geq 0$.  Suppose that 
$I = \bigoplus_{n \geq 0} V_n t^n$ is a nonzero ideal of $A$.  Then $B_m I_n + I_n B_m \subseteq I_{n+m}$ translates into 
$J_m \sigma^m(V_n) + V_n \sigma^{n}(J_m) \subseteq V_{n+m}$, where $J_m = J \sigma(J) \cdots \sigma^{m-1}(J)$ and 
$V_n \subseteq J_n$.   Note that for some $n_0 \geq 0$, $I_n \neq 0$ for all $n \geq n_0$.
Let $W_n$ be the closed subset of $X$ defined by $V_n$; 
so certainly $Z_0 \cup \dots \cup Z_{n-1} \subseteq W_n$, where $Z_i = \sigma^{-i}(Z)$ as usual.
For each $n \geq n_0$, let $Y_n$ be the closure of those generic points of the irreducible components of $W_n$ which are 
not contained in $Z_0 \cup \dots \cup Z_{n-1}$.  Thus $Y_n$ is the uniquely smallest closed subset of $X$ such that 
$W_n = Y_n \cup Z_0 \cup \dots \cup Z_{n-1}$.
Since $X$ has DCC on closed subsets, we may find $n \geq n_0$ such that there does not exist $i \geq n_0$ with 
$Y_i \subsetneq Y_n$.   From the equation $J_n \sigma^n(V_n) + V_n \sigma^{n}(J_n) \subseteq V_{2n}$ we get 
\[Y_{2n} \subseteq [\sigma^{-n}(Y_n) \cup Z_0 \cup \dots \cup Z_{2n-1} ]  \cap [Y_n \cup Z \cup \dots \cup Z_{2n-1}].
\]
Since each generic point of $Y_{2n}$ is not contained in $Z_0 \cup \dots \cup Z_{2n-1}$, we see that this forces
$Y_{2n} \subseteq Y_n$ and $Y_{2n} \subseteq \sigma^{-n}(Y_n)$.  By choice of $n$ we have $Y_{2n} = Y_n$, and thus 
$Y_n \subseteq \sigma^{-n}(Y_n)$.  Again since $X$ is noetherian, this gives $Y_n = \sigma^{-n}(Y_n)$.  Now since 
$\sigma$ is a wild automorphism and $Y_n \neq X$ this forces $Y_n = \emptyset$.
We now have that $R/V_n$ is supported exactly along $Z_0 \cup \dots \cup Z_{n-1}$.  Since 
$V_{2n} \supseteq V_n \sigma^{n}(J_n) + J_n \sigma^n(V_n)$, looking locally along each $Z_i$ we see that this forces
$J_{2n} \subseteq V_{2n}$.  Thus $V_{2n} = J_{2n}$ and so $B_{2n} = A_{2n} \subseteq I$.  Now since $A$ is generated in 
degrees $0$ and $1$, $A_{\geq 2n} = B_{\geq 2n} \subseteq I$, proving the claim.

Similarly, every homogeneous ideal of $B_{\leq 0}$ contains $B_{\leq -n}$ for some $n \gg 0$.  Now if $I$ is a nonzero homogeneous ideal of $B$, 
then we obtain $n > 0$ such that $B_{\geq n} + B_{\leq -n} \subseteq I$.    By Lemma~\ref{lem:B-props1}(3), $I$ is the unit ideal of $B$.
This proves that $B$ is \emph{graded simple}, that is, that $B$ has no proper homogeneous ideals.  Now 
$Q = Q_{\rm gr}(B) \cong K[t, t^{-1}; \sigma]$, where $K$ is the quotient field of $R$.  The setup in Definition~\ref{def:B} certainly 
forces $\sigma$ to be of infinite order.  Then $Q$ is a simple ring, and it easily follows that $B$ is simple if and only if it is graded simple
(see \cite[Lemma 2.6(2)]{BRS}), which finishes the proof.
\end{proof}

\begin{example}
As already mentioned, many examples of noetherian but not strongly noetherian $\mb{N}$-graded 
examples are known; see for example \cite{KRS}.   It is interesting to note that in the $\mb{Z}$-graded setting one can even get such examples which are simple.  To give an easy explicit example, take the torus $X = \spec R$ where $R = k[x_1^{\pm 1}, x_2^{\pm 1}]$ with automorphism $\sigma(x_1) = p_1x_1$ and $\sigma(x_2) = p_2 x_2$ for constants $p_1, p_2$ generating a free abelian subgroup of $k$.  The automorphism $\sigma$ is wild by Lemma~\ref{T-lem} below.  Then let $Z = q$ be any closed point of $X$, defined by the maximal ideal $\mf{m}$, say, and take $B = B(Z, H, J)$ with $H = R, J = \mf{m}$.   The $\sigma$-orbit of $q$ is critically dense by an affine version of the argument in \cite[Theorem 12.3]{Ro}.
Now apply Proposition~\ref{prop:Bsimp} to see that $B$ is simple and noetherian but not strongly noetherian.
\end{example}

\section{Properties of $\mb{Z}$-graded algebras}
\label{simple-sec}

In this section, we study some general results about the properties of $\mb{Z}$-graded algebras. 
Then in the next section, we will use these results to classify simple, birationally commutative $\mb{Z}$-graded algebras under 
some further assumptions.

We first make a general comment about the possibility of graded pieces which are $0$.
Suppose that $A = \bigoplus_{n \in \mb{Z}} A_n$ is a $\mb{Z}$-graded $k$-algebra, and let $S =
\{n \in \mb{Z} | A_n \neq 0 \}$.   We exclude the trivial case $S = \{ 0 \}$ where the grading is irrelevant.
If $\gcd(S) = d > 1$, then we may study the $d$th Veronese ring
$A^{(d)} = \bigoplus_{n \geq 0} A_{nd}$ instead with no loss of information.  Thus 
we assume that $d = 1$.  Now if $A$ is a domain, then $S$ is also a sub-semigroup of $\mb{Z}$.
Then it is trivial to prove that exactly one of the following happens: $A$ is $\mb{N}$-graded with $A_n \neq 0$ for all $n \gg 0$; (ii) $A$ is $-\mb{N}$-graded with $A_n \neq 0$ for all $n \ll 0$; or (iii) $A_n
\neq 0$ for all $n \in \mb{Z}$.   If $A$ is simple, then case (iii) is forced.  In this paper we are primarily 
interested in $\mb{Z}$-graded simple domains $A$, so it is reasonable to always assume that $A_n \neq 0$ for all $n \in \mb{Z}$.

Recall that the graded ring of fractions of a $\mb{Z}$-graded Ore domain $A$ has the form $D[t, t^{-1}; \sigma]$ for a division ring $D$, 
and that $A$ is \emph{birationally commutative} if $D$ is a field.  In the next result, we study how some properties of the division ring $D$ are related to the properties of $A$.  We assume that the reader is familiar with the basic properties of Gelfand-Kirillov (GK) dimension, for 
which \cite{KL} is the standard reference.  In particular, 
we will use the convenient result that a domain of finite GK-dimension is automatically an Ore domain \cite[Proposition 4.13]{KL}.
\begin{lemma}
\label{D-fg-lem} 
Let $A$ be a  $\mb{Z}$-graded $k$-algebra which is an Ore domain with $A_n \neq 0$ for all $n \in \mb{Z}$.  Let $Q  = Q_{{\rm gr}}(A) \cong D[t, t^{-1}; \sigma]$. 
\begin{enumerate}
\item $A_0$ is a Ore domain, with quotient division ring $D$.
\item If $A$ is a finitely generated $k$-algebra with $\GK A < \infty$, then $D$ is finitely generated as a division algebra over $k$.
Moreover, if $\GK A = 2$ then $D$ is a finitely generated field extension of transcendence degree $1$ over $k$.
\end{enumerate}
\end{lemma}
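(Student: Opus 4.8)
The plan is to prove (1) first, since (2) will use it together with standard facts about GK-dimension. For (1), the key observation is that every nonzero element $q \in D$ can be written as $q = (a t^{m})(b t^{m})^{-1}$ for homogeneous $a, b \in A$ of the same degree $m$: indeed $D$ sits inside $Q = Q_{\mathrm{gr}}(A)$ as the degree-$0$ part, and any element of $Q$ is a ratio of homogeneous elements of $A$; after clearing, since $A_n \neq 0$ for all $n$, I can arrange numerator and denominator to have any common degree I like by multiplying by a suitable homogeneous element, in particular degree $0$ is not forced but I can reduce to the case where $q = (a_0)(b_0)^{-1}$ with $a_0, b_0 \in A_0$ after multiplying numerator and denominator of a degree-$0$ element by an element of $A_{-m}$ to shift both into $A_0$. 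Concretely: write $q = \alpha \beta^{-1}$ with $\alpha \in A_m$, $\beta \in A_m$ homogeneous (possible because both have the same degree, namely $0 = \deg q = \deg\alpha - \deg\beta$, so $\deg \alpha = \deg\beta = m$ for some $m$); pick $0 \neq \gamma \in A_{-m}$, then $\gamma\alpha, \gamma\beta \in A_0$ and $q = \alpha\beta^{-1} = (\gamma\alpha)(\gamma\beta)^{-1}$ using that $A$ is a domain so $\gamma$ is not a zero-divisor. Hence $D = Q(A_0)$ as a set of fractions of elements of $A_0$. Since $A_0$ is a subring of the domain $Q$, it is itself a domain, and the computation just given shows it satisfies the (left and right) Ore condition with respect to all nonzero elements and that its quotient division ring is $D$.

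For (2), assume $A$ is finitely generated as a $k$-algebra with $\GK A < \infty$. By \cite[Proposition 4.13]{KL}, $A$ is automatically Ore, so part (1) applies. I want to conclude $D$ is finitely generated as a division algebra over $k$. The idea is that $A_0$ need not be finitely generated as an algebra, but we don't need that: we only need finitely many elements of $A_0$ whose generated division subring is all of $D$. Take a finite generating set of $A$ consisting of homogeneous elements, say including $0 \neq x \in A_1$ and $0 \neq y \in A_{-1}$ among them (possible since $A_n \neq 0$ for all $n$; enlarge the generating set if necessary). Then any element of $A_0$ is a $k$-linear combination of products of the generators that happen to land in degree $0$; each such product, by inserting $xy$ or $yx$ appropriately, or rather by the fact that $xy, yx \in A_0$, can be manipulated — more cleanly, the division subring $D' \subseteq D$ generated by $A_0 \cap (\text{subalgebra generated by the finite set})$... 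Let me instead argue: let $F$ be the $k$-subalgebra of $A$ generated by the chosen finite homogeneous generating set $\{g_1,\dots,g_r\}$; then $F = A$. The degree-$0$ part $A_0 = F_0$ is spanned by monomials in the $g_i$ of total degree $0$. Each such monomial $w$ of degree $0$, together with $x \in A_1$ and $y \in A_{-1}$, I claim lies in the division subring of $D$ generated by the finite set $\{g_i x^{\pm}, \dots\}$ — the cleanest route: observe $w = (w x^{N})(x^N)^{-1}$... but $x^N \in A_N$, not $A_0$.

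Here is the fix, and I expect this bookkeeping to be the main obstacle: I will not try to generate $A_0$ as an algebra but will directly exhibit $D$ as generated over $k$ by finitely many elements of $Q$. Since $Q$ is generated as a division ring over $k$ by $\{g_1,\dots,g_r\}$ (as $A$ generates $Q$ and the $g_i$ generate $A$), and since $t = (g_i)(\sigma\text{-twisted stuff})$... Actually $Q = D[t,t^{-1};\sigma]$ and $t$ can be taken to be any nonzero element of $Q_1$; choose $t = x \in A_1$. Then $D$ is generated over $k$ as a division ring by $\{x, x^{-1}\}$ together with the set $\{x^{-\deg g_i} g_i : i\}$, since conjugating/multiplying the $g_i$ by powers of $x$ lands them in $Q_0 = D$, and these finitely many elements together with $x$ clearly generate all of $Q$, hence (taking the degree-$0$ part of the generated division ring, which is a division ring) generate $D$. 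This shows $D$ is finitely generated as a division algebra over $k$. For the last sentence: if $\GK A = 2$, then $\GK Q_{\mathrm{gr}}(A) \le 2$ as well (localization does not increase GK-dimension for the relevant comparisons — cite the standard behavior from \cite{KL}), and $Q = D[t,t^{-1};\sigma]$ has $\GK Q = \GK D + 1$ since adjoining a skew-Laurent variable adds exactly $1$ to GK-dimension; hence $\GK D \le 1$. A finitely generated division algebra over $k$ of GK-dimension $\le 1$ that is not $k$ itself must be commutative of transcendence degree exactly $1$ (by the Small–Warfield-type result that GK-dimension $1$ forces PI, hence here a field, combined with $A_0 \subsetneq D$ forcing $\GK D \ge 1$ since $A$ is not just $k[t,t^{-1};\sigma]$ — or more simply, $D \ne k$ because $A_0 = B_0$ contains transcendental elements as $A$ has GK-dimension $2$ rather than $1$); this pins down $D$ as a finitely generated field extension of transcendence degree $1$ over $k$. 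The delicate point to get right is the exact citation for "$\GK D + 1 = \GK D[t,t^{-1};\sigma]$" and for "GKdim $1$ division algebra is a field", both of which are in \cite{KL} or follow from Small–Warfield, so I would pin those references down carefully rather than reprove them.
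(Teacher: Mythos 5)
Your part (1) is essentially the paper's argument (express each element of $D$ as a fraction of two homogeneous elements of the same degree $m$ and clear the degree using $A_{-m}\neq 0$), but note a slip: you multiply on the \emph{left} by $\gamma\in A_{-m}$, and in the noncommutative ring $Q$ one has $(\gamma\alpha)(\gamma\beta)^{-1}=\gamma\,\alpha\beta^{-1}\gamma^{-1}$, which need not equal $\alpha\beta^{-1}$. Multiply on the right instead, $(\alpha\gamma)(\beta\gamma)^{-1}=\alpha\beta^{-1}$, as the paper does; with that fix (and a one-line remark that the Ore condition for $A_0$ follows by restricting the homogeneous Ore condition of $A$ to degree $0$), part (1) is fine.

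Part (2) has a genuine gap at the finite-generation step. Knowing that $Q$ is ``generated'' by $x^{\pm 1}$ together with the finitely many degree-zero elements $h_i=x^{-\deg g_i}g_i$ does not show that $D$ is finitely generated as a division algebra over $k$: $x\notin D$, and the degree-zero part of the graded division ring generated by $\{x^{\pm1},h_1,\dots,h_r\}$ is built from the whole infinite family $\sigma^n(h_i)=x^n h_i x^{-n}$, $n\in\mb{Z}$, not from the $h_i$ alone. Your argument never uses $\GK A<\infty$, and the implication you want is false without it: take $R=k[y_n : n\in\mb{Z}]$ with $\sigma(y_n)=y_{n+1}$ and $A=R[t,t^{-1};\sigma]=k\langle y_0,t,t^{-1}\rangle$; this is a finitely generated $\mb{Z}$-graded Ore domain with $A_n\neq 0$ for all $n$, yet $D=k(y_n : n\in\mb{Z})$ is not a finitely generated division algebra (of course $\GK A=\infty$ here). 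So the GK hypothesis must enter; the paper does this by passing to the connected $\mb{N}$-graded subalgebra $A'=k\langle a_1,\dots,a_r,b\rangle$ (where $g_i=a_ib^{-1}$ with $a_i,b$ of positive degree), noting $Q_{\rm gr}(A')=Q_{\rm gr}(A)$ and $\GK A'\le\GK A<\infty$, and quoting \cite[Theorem 1.15]{AS}. The $\GK A=2$ part of your sketch also rests on two false general principles: localization \emph{can} increase GK-dimension (the paper's Example~\ref{ex:weyl-loc} is exactly such an example), and $\GK D[t,t^{-1};\sigma]=\GK D+1$ fails for general $\sigma$ (this failure is the whole point of \cite{Zh} and of Lemma~\ref{fixed-V-lem} later in the paper). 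The paper instead rules out $\GK A'=1$ (a connected graded domain of GK-dimension $1$ over algebraically closed $k$ has $Q_{\rm gr}\cong k[t,t^{-1}]$, contradicting $\GK A=2$), uses Bergman's gap theorem to get $\GK A'=2$, and then applies \cite[Theorem 0.1]{AS} to conclude $D$ is a field of transcendence degree one; your route would need to be replaced by something of this kind.
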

\begin{proof}
(1) It is clear that $A_0$ is an Ore domain, by restricting the Ore condition to degree $0$ elements.
An arbitrary element of $D$ is of the form $x y^{-1}$ for some $x, y \in A_n$, some $n$.  
Choosing $0 \neq z \in A_{-n}$ then $xy^{-1} = (xz)(yz)^{-1}$ is an element of 
the Ore quotient ring $Q(A_0)$ of $A_0$.  So $A_0 \subseteq D \subseteq Q(A_0)$.  Conversely, $Q(A_0) \subseteq Q_{{\rm gr}}(A)_0 = D$ 
is obvious.  

(2)  We reduce to the $\mb{N}$-graded case, where the result is known.
Consider a finite homogeneous generating set $\{ x_1, \dots,
x_r \}$ for $A$ as a $k$-algebra.  In $Q$, we may write $x_i = a_i b^{-1}$ for some $a_i, b \in A$ of positive degree.
Now let $A' = k \langle a_1, \dots, a_r, b \rangle$,
which is a connected $\mb{N}$-graded subalgebra of $A$.  
The ring $A'$ is also an Ore domain since $\GK A' < \GK A  < \infty$.
Clearly by construction $Q_{\rm gr}(A) = Q_{\rm gr}(A')$.   Then \cite[Theorem 1.15]{AS}
proves that $D = Q_{\rm gr}(A')_0$ is finitely generated as a
division algebra over $k$.

Now suppose that $\GK A = 2$.  Then $\GK A' \leq 2$.  
It is well-known that a connected graded domain $B$ with 
$\GK B = 1$ over an algebraically closed field $k$ must have $Q_{\rm gr}(B) \cong k[t, t^{-1}]$.    This shows that 
$\GK A' = 1$ is impossible.  Since no 
algebra has GK-dimension between $1$ and $2$ \cite[Theorem 2.5]{KL}, $\GK A' = 2$ also.  
Then $D = Q_{\rm gr}(A')_0$ is a field of transcendence degree $1$ over $k$, by \cite[Theorem 0.1]{AS}.
\end{proof}

The hypothesis that a $\mb{Z}$-graded algebra $A$ be finitely generated as a $k$-algebra is awkward to work with in some ways.   
For example, it does not imply that $A_0$ is finitely generated as a $k$-algebra, as the following standard example shows.
\begin{example}
\label{ex:weyl-loc}
Let $A$ be the Weyl algebra $k \langle x, y \rangle/(xy-yx -1)$ 
and let $\Omega = \{ xy-m | m \in \mb{Z} \}$.  Then $\Omega$ is an Ore set in $A$ and $A\Omega^{-1}$ is still finitely generated as an algebra; 
moreover, $\GK A  = 2$ while $\GK A\Omega^{-1} = 3$  \cite[Example 4.11]{KL}.  Note that if $A$ is $\mb{Z}$-graded as usual with $\deg x = 1, \deg y = -1$, 
then $B = A \Omega^{-1}$ is still $\mb{Z}$-graded, and $B \subseteq Q_{\rm gr}(A) \cong k(z)[x, x^{-1}]$ where $z = xy$.  However, $B_0 \subseteq k(z)$ 
is not a finitely generated $k$-algebra, since $(z-m)^{-1} \in B_0$ for all $m \in \mb{Z}$, whereas a finitely generated subalgebra of $k(z)$ must consist 
of functions with poles coming from a fixed finite set.
\end{example}

Part of our technique for studying $\mb{Z}$-graded algebras $A$ is to first consider the $\mb{N}$-graded part $A_{\geq 0}$, and the example 
above shows that we cannot expect this to be a finitely generated algebra just because $A$ is.  Thus it is convenient to 
work with the following weaker hypothesis which is still sufficient for our applications.
\begin{definition}
Let $A = \bigoplus_{n \in \mb{Z}} A_n$ be any $\mb{Z}$-graded $k$-algebra.  We say that $A$ is \emph{quasi-finitely generated} if there is a finite set of degrees $S \subseteq \mb{Z}$ such that $\{ A_i | i \in S \}$ generates
$A$ as a $k$-algebra.
\end{definition}
It is obvious that a finitely generated $\mb{Z}$-graded algebra is quasi-finitely generated, but in contrast to Example~\ref{ex:weyl-loc} 
we have the following.
\begin{lemma}
\label{fg-lem} 
\label{lem:fg}
Let $B$ be a $\mb{Z}$-graded quasi-finitely generated $k$-algebra.
Then $B_{\geq 0}$ is also quasi-finitely generated, or equivalently there is $r \geq 1$
such that $B_n = \sum_{i=1}^{r} B_i B_{n-i}$ for $n > r$.
\end{lemma}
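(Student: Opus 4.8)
The plan is to reduce everything to an elementary fact about lattice walks with bounded step size. First I would fix a finite set of degrees $S\subseteq\mb{Z}$ such that $V:=\bigoplus_{i\in S}B_i$ generates $B$ as a $k$-algebra, and set $M=\max\{\,|i|:i\in S\,\}$. If $M=0$ then $B=B_0$ and there is nothing to prove, so I assume $M\ge 1$ and aim to establish the displayed identity with $r=M$. Since $V$ is a graded subspace generating $B$, the component $B_n$ is spanned by monomials $v_1v_2\cdots v_\ell$ with each $v_a\in B_{j_a}$, $|j_a|\le M$, and $j_1+\cdots+j_\ell=n$; this reduction to homogeneous generators of bounded degree is routine.

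Next, fix $n>M$ and such a monomial $v_1\cdots v_\ell\in B_n$, and consider the partial sums $s_0=0$, $s_a=j_1+\cdots+j_a$, so $s_\ell=n$. Because $s_0\le 0<n=s_\ell$, there is a smallest index $a^\ast$ with $s_{a^\ast}\ge 1$; then $s_{a^\ast-1}\le 0$, so $s_{a^\ast}=s_{a^\ast-1}+j_{a^\ast}\le M$, that is, $1\le s_{a^\ast}\le M$. Cutting the monomial at that point gives $v_1\cdots v_\ell=(v_1\cdots v_{a^\ast})(v_{a^\ast+1}\cdots v_\ell)\in B_{s_{a^\ast}}B_{\,n-s_{a^\ast}}\subseteq\sum_{i=1}^{M}B_iB_{n-i}$. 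Since such monomials span $B_n$ and the reverse inclusion is obvious, $B_n=\sum_{i=1}^{M}B_iB_{n-i}$ for every $n>M$. An immediate induction on $n$ (using $0\le n-i<n$ for $1\le i\le M<n$) then yields $B_{\ge 0}=k\langle B_0\oplus B_1\oplus\cdots\oplus B_M\rangle$, so $B_{\ge 0}$ is quasi-finitely generated.

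It remains to record the ``or equivalently'' clause. The induction just used shows, more generally, that whenever some $r\ge 1$ satisfies $B_n=\sum_{i=1}^r B_iB_{n-i}$ for all $n>r$, the algebra $B_{\ge 0}$ is generated by $B_0,\dots,B_r$; together with the identity of the previous paragraph (take $r=M$) this is exactly the assertion of the lemma. For the converse direction of the equivalence, suppose $B_{\ge 0}$ is generated by $\{B_i:i\in S'\}$ with $S'\subseteq\mb{N}$ finite, and put $r=\max(S'\cup\{1\})$; running the same partial-sum argument on a monomial in these generators — where now all $j_a\ge 0$, so the first index with $s_a\ge 1$ automatically has $s_{a-1}=0$ and hence $s_a=j_a\le r$ — gives $B_n=\sum_{i=1}^r B_iB_{n-i}$ for $n>r$.

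I do not expect a genuine obstacle. The only point that requires a moment's thought is that one need not track how the partial degree of a monomial wanders below $0$: it suffices to cut at the very first moment the partial degree becomes positive, and since the step that crosses $0$ has size at most $M$, that cut lands in the window $[1,M]$. Everything else — the reduction to homogeneous generators, and the two easy inductions on $n$ — is routine bookkeeping.
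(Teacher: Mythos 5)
Your proof is correct and is essentially the paper's own argument: decompose an element of $B_n$ into words in homogeneous generators of degrees bounded in absolute value by $r$ (your $M$), and cut each word at the first index where the partial degree becomes positive, noting that the step crossing zero forces that degree to lie in $\{1,\dots,r\}$, so $B_n=\sum_{i=1}^r B_iB_{n-i}$ for $n>r$. The only difference is cosmetic: you spell out both directions of the ``equivalently'' clause and the trivial $M=0$ case, which the paper leaves implicit.
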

\begin{proof}
We may assume that $B$ is generated as a $k$-algebra by $\{ B_i | -r \leq i \leq r \}$, some $r \geq 1$.
For $n > r$, an arbitrary element of $B_n$ is a sum of words $w = x_1x_2 \dots x_m$ 
where each $x_i$ has degree $d_i$ with $|d_i| \leq r$ and $\sum d_i = n$.
Define $w_j =x_1 \dots x_j$ for each $j$.  There is a smallest $j \geq 1$ such that $\deg
w_j > 0$, in which case clearly $1 \leq \deg w_j \leq r$.  Also, $j \neq m$, 
since $\deg w_m = \deg w = n> r$.
Now set $w' = w_j$ and $w'' = x_{j+1} \dots x_m$.  Then 
$w = w' w'' \in B_i B_{n-i}$ for $i = \deg w_j$.   Thus $B_n = \sum_{i=1}^{r} B_i B_{n-i}$ for all $n > r$.  
Equivalently, $B_{\geq 0}$ is generated as an algebra by $\{ B_i | 0 \leq i \leq r \}$.
\end{proof}

Next, we study some consequences of assuming that $\mb{Z}$-graded algebra is simple
and birationally commutative.

\begin{lemma}
\label{lem:basics}
Let $A$ be a $\mb{Z}$-graded $k$-algebra which is a simple Ore domain with $Q_{\rm gr}(A) \cong K[t, t^{-1}; \sigma]$ for some field $K$.  Assume that $R = A_0$ is noetherian, and let $C \subseteq K$ be the $k$-algebra generated by $\{ \sigma^i(R) | i \in \mb{Z} \}$.
\begin{enumerate}
\item $C$ is $\sigma$-simple.
\item $R \subseteq C$ is an integral extension of rings, and if it is a finite extension, then $R = C$.
\item Suppose that the integral closure of $R$ is a finite  $R$-module, and that the singular locus of $X = \spec R$ is a proper closed subset of $X$.  Then $R = C$ and $R$ is a regular ring.
\end{enumerate}
\end{lemma}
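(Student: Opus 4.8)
The plan is to treat the three parts in order, exploiting a single recurring device. Throughout, write $Q = Q_{\rm gr}(A) = K[t,t^{-1};\sigma]$ and $A_n = V_n t^n$, where $V_n \subseteq K$ is an additive subgroup with $V_0 = R$; since $A$ is a simple domain, $A_n\neq 0$ and hence $V_n \neq 0$ for all $n$, and the module relations $RV_n \subseteq V_n$, $V_n\sigma^n(R)\subseteq V_n$, $V_m\sigma^m(V_n)\subseteq V_{m+n}$ hold. The device: for a nonzero ideal $\mathfrak a$ of $R$, the two-sided ideal $A\mathfrak a A$ of $A$ is all of $A$ by simplicity, so its degree-$0$ component is $R$; and because $K$ is commutative one computes $A_m \mathfrak a A_{-m} = \sigma^m(\mathfrak a)\,\mathfrak b_m$, where $\mathfrak b_m := V_m\sigma^m(V_{-m})$ is a nonzero ideal of $R$. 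Thus simplicity produces, for every nonzero ideal $\mathfrak a \subseteq R$, the identity $R = \sum_{m\in\mb Z}\sigma^m(\mathfrak a)\,\mathfrak b_m$, which drives everything below.

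For (1): clearly $\sigma(C) = C$. Given a nonzero $\sigma$-stable ideal $I$ of $C$, I would first observe that $\mathfrak a := I \cap R$ is nonzero: writing a nonzero element of $I$ as $a/b$ with $a,b \in R$ gives $0 \neq a = b(a/b) \in RI \subseteq I$. Applying the device, $1 \in \sum_m \sigma^m(\mathfrak a)\mathfrak b_m$; but $\sigma^m(\mathfrak a) = I \cap \sigma^m(R) \subseteq I$ and $\mathfrak b_m \subseteq R \subseteq C$, so each summand lies in $IC = I$, forcing $I = C$.

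For (2): the first statement follows from the determinant trick. For $r \in R$, the relation $V_n\sigma^n(R)\subseteq V_n$ and commutativity of $K$ give $\sigma^n(r)\mathfrak b_n \subseteq \mathfrak b_n$, and $\mathfrak b_n$ is a nonzero finitely generated (hence faithful) $R$-module, so $\sigma^n(r)$ is integral over $R$; as the integral closure of $R$ in $K$ is a subring and $C$ is generated by the $\sigma^n(R)$, all of $C$ is integral over $R$. For the second statement, assume $R\subseteq C$ is module-finite. Then the conductor $\mathfrak f := (R:_K C)$ is a nonzero ideal of $R$, and --- this is the one point needing a small check --- also an ideal of $C$: if $c\in C$, $x\in\mathfrak f$, then $(cx)C = x(cC)\subseteq xC\subseteq R$, so $cx\in R$ (evaluate at $1$) and $cx\in\mathfrak f$. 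Hence each $\sigma^m(\mathfrak f)$ is an ideal of $\sigma^m(C) = C$. Running the device on $\mathfrak a = \mathfrak f$ gives $R = \sum_m\sigma^m(\mathfrak f)\mathfrak b_m$, and then $C = CR = \sum_m C\sigma^m(\mathfrak f)\mathfrak b_m = \sum_m\sigma^m(\mathfrak f)\mathfrak b_m = R$, using $C\sigma^m(\mathfrak f) = \sigma^m(\mathfrak f)$.

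For (3): by (2), $C$ is integral over $R$, hence contained in the integral closure $\overline R$ of $R$ in $K$; by (i), $\overline R$ is $R$-module-finite, so $C$ is too ($R$ being noetherian), and the second part of (2) gives $R = C$. Then $\sigma(R) = \sigma(C) = C = R$, so $\sigma$ restricts to an automorphism of $X = \spec R$; by (1), $R$ is $\sigma$-simple, i.e. $\sigma$ is wild. The singular locus of $X$ is preserved by the automorphism $\sigma$ and is a proper closed subset by (ii), so wildness forces it to be empty; thus $R$ is regular. The main obstacle, such as it is, is finding the right engine --- in particular recognizing that the conductor $\mathfrak f$ is an ideal of $C$, which is exactly what lets $C$ absorb the identity $R = \sum_m\sigma^m(\mathfrak f)\mathfrak b_m$ and collapse to $R$ in part (2).
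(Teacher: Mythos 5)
Your proposal is correct, and for parts (1) and (2) it takes a recognizably different route from the paper. The paper's mechanism is the auxiliary overring $A' = \bigoplus_{n} C V_n t^n \subseteq K[t,t^{-1};\sigma]$: it checks that $A'$ inherits simplicity from $A$, gets (1) by noting that a proper $\sigma$-stable ideal $I$ of $C$ would give the proper ideal $IA' = \bigoplus_n IV_nt^n$ of $A'$, and gets (2) by finding $b \neq 0$ with $bA' \subseteq A$, so that the annihilator of the bimodule $A'/A$ is a nonzero two-sided ideal of $A$, forcing $A = A'$ and $R = C$. You instead stay inside $A$ and extract from simplicity the degree-zero identity $R = \sum_m \sigma^m(\mathfrak a)\,\mathfrak b_m$ with $\mathfrak b_m = V_m\sigma^m(V_{-m})$, which you feed $\mathfrak a = I\cap R$ for (1) and the conductor $\mathfrak a = (R:_K C)$ for (2), the key observation being that the conductor (and hence each $\sigma^m(\mathfrak f)$) is an ideal of $C$, so $C$ collapses into $R$. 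Your integrality argument also differs cosmetically: you run the determinant trick on the faithful finitely generated module $\mathfrak b_n$, where the paper observes $\sigma^i(R) \subseteq (v_i\sigma^i(w_i))^{-1}R$ and invokes noetherianity; these are the same substance. Part (3) is identical to the paper's. What each approach buys: yours avoids having to verify that $A'$ is simple and that $IA'$ is proper, at the modest cost of the conductor bookkeeping; the paper's $A'$ construction is not wasted effort, since the identical trick is reused later (in the proof of Theorem~\ref{thm:inT}(2), where $A_0$ is not assumed noetherian). Two small points you rely on tacitly but which are available: $A_n \neq 0$ for all $n$ (forced by simplicity, as discussed at the start of Section~\ref{simple-sec}) and $K = \operatorname{Frac}(R)$ (Lemma~\ref{D-fg-lem}(1)), which is what justifies writing elements of $C$ as $a/b$ with $a,b \in R$ and the nonvanishing of the conductor.
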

\begin{proof}
(1) Clearly $\sigma$ restricts to an automorphism of $C$. 
Write $A = \bigoplus_{n \in \mb{Z}} V_i t^i$ with $V_i \subseteq K$, and let 
$A' = \bigoplus_{n \in \mb{Z}} C V_i t^i$, which is easily checked to be a subring of $K[t, t^{-1}; \sigma]$ also. 
Since $A$ is simple and $A \subseteq A' \subseteq Q_{\rm gr}(A)$, an easy argument shows that  $A'$ is also simple.

Now if $I$ is an ideal of $C$ with $\sigma(I) = I$, then $IA' = \bigoplus IV_n t^n$ is a an ideal of $A'$.  
Since $(IA')_0 = I$, this ideal is proper in $A'$ if $I$ is proper in $C$.  Since $A'$ is a 
simple ring, $I = 0$ or $I = C$.  Then $C$ is $\sigma$-simple. 

(2)  Again we write $A = \bigoplus_{n \in \mb{Z}} V_i t^i$ with $V_i \subseteq K$.
Note that picking any nonzero elements $v_i \in V_i$, $w_i \in V_{-i}$,
then $v_i R t^i w_i t^{-i} = v_i \sigma^i(w_i) \sigma^i(R) \subseteq R$.  
This shows, since $R$ is noetherian, that the algebra $\sigma^i(R) R$ is a finite $R$-module,
and hence $R \subseteq \sigma^i(R)R$ is an integral extension of rings.  Since $C$ is generated 
by elements integral over $R$, $R \subseteq C$ is an integral extension.  

Now suppose that $R \subseteq C$ is a finite extension, so 
there is $0 \neq b \in R$ such that $bC \subseteq R$.  
Defining again the ring $A' = \bigoplus_{n \in \mb{Z}} C V_i t^i$, we have $bA' \subseteq A$.   Then 
$I = \operatorname{r.ann}_A(A'/A) \neq 0$, forcing $I = A$ and $A = A'$.
In particular, $R = C$.

(3) Because the integral closure of $R$ is a finite $R$-module, so is the integral extension $C$ of $R$.  Thus $R= C$ by part (2).  Then $\sigma(R) = R$ and $R$ is $\sigma$-simple by part (1).  By hypothesis, the set of points $S \subseteq X = \spec R$ where $R$ is singular 
(that is, those primes $p$ such that $R_p$ is not regular local) is a proper closed subset in the Zariski topology.
Since $\sigma$ is an automorphism of $R$, $\sigma(S) \subseteq S$.  Since $\sigma: X \to X$ 
is wild, $S = \emptyset$ and $R$ is a regular ring.
\end{proof}

The hypotheses on $R = A_0$ in part (3) of the lemma above are very weak and hold, for example, for all excellent rings, a class containing most of the commutative noetherian rings one encounters in practice, in particular finitely generated $k$-algebras \cite[32.B, 33.H, 34.A]{Ma}.
Thus, morally the lemma says that the zeroth degree piece of a simple birationally commutative 
$\mb{Z}$-graded algebra $A$ ought to be regular.  However, we are unable to rule out the possibility that there exist such examples $A$ where 
$A_0$ has bizarre properties.

With the previous lemma as justification, in the remainder of this section we study the further properties of $\mb{Z}$-graded birationally commutative algebras whose degree zero piece is regular.   We first need to review some properties of modules over a regular ring, as well as some definitions related to divisors on the corresponding affine scheme.
Let $R$ be a regular noetherian commutative domain with field of fractions $K$, and let $X = \spec R$.
For any finitely generated $R$-submodule
$M$ of $K$, write $M^* = \Hom_R(M,R)$, which as in Section~\ref{sec:somerings} we identify with  
$\{x \in K | xM \subseteq R\}$.   Then $M \subseteq M^{**} \subseteq K$, and $M$ is called 
\emph{reflexive} if $M = M^{**}$.  In general the module $M^{**}$ is reflexive and is called the \emph{reflexive hull} of $M$.  
For convenience of notation we write $\wt{M}$ for $M^{**}$ from now on.

Recall that a (Weil) divisor on $X$ is a formal $\mb{Z}$-linear 
combination of irreducible closed subsets of codimension 1 in $X$.  We say a divisor is effective if 
all of its coefficients are nonnegative, and write $D \geq E$ if $D - E$ is effective.  This  partial order 
determines max and min operations on divisors similarly as we defined for cycles in Definition~\ref{def:cycle}.
Every $f \in K$ has an associated \emph{principal divisor}
$(f) =  \sum_{Z} \nu_Z(f) Z$, where we sum over all codimension-1 irreducible closed subsets $Z$ of $X$, and where $\nu_Z$
is the valuation on $K$ measuring the order of vanishing of $f$ along $Z$.  The Picard group $\Pic(X)$ is the group of 
all divisors modulo principal divisors.  There is a one-to-one correspondence between finitely generated locally free $R$-submodules of $K$ and Weil divisors on $X$.  Explicitly, given a finitely generated locally free $R$-submodule $M \subseteq K$, $M = \mc{O}_X(D)$ where 
$D$ is the unique smallest divisor such that $(f) + D$ is effective for all $f \in M$.
If $\sigma: R \to R$ is an automorphism, inducing $\sigma: X \to X$ which acts on divisors in the obvious way, 
then $\sigma(\mc{O}_X(D)) = \mc{O}_X(\sigma^{-1}(D))$.
More detail about all of the definitions above can be found in \cite[Section II.6]{Ha}.

We recall the following standard facts. 
\begin{lemma}
\label{lem:regfacts}
Let $R$ be a regular noetherian commutative domain with field of fractions $K$, let $X = \spec R$, and 
let $M, N \subseteq K$ be  finitely generated $R$-submodules.  
\begin{enumerate}
\item The following are equivalent: (i) $M$ is invertible; (ii) $M$ is locally principal; and 
(iii) $M$ is reflexive.
\item $\wt{M} \wt{N} = \wt{MN}$. 
\item If $M = \mc{O}_X(B)$ and $N = \mc{O}_X(C)$, then $MN = \wt{MN} = \mc{O}_X(B + C)$ and $\wt{M + N} = \mc{O}_X(\max(B, C))$.
\end{enumerate}
\end{lemma}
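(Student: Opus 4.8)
The plan is to prove Lemma~\ref{lem:regfacts} by reducing everything to the local case, where $R$ is a regular local ring, hence a UFD, and all of the asserted facts become classical statements about fractional ideals over a UFD. The key tool is that for a regular noetherian domain $R$, the reflexive hull of a finitely generated submodule $M\subseteq K$ can be computed by intersecting localizations at height-one primes: $\wt M = \bigcap_{\operatorname{ht} \p = 1} M_{\p}$, where each $M_{\p}$ is a fractional ideal of the DVR $R_{\p}$. This is the algebraic incarnation of the correspondence between reflexive modules and Weil divisors described in the text. I would record this at the start, together with the remark that any of the equalities to be proved can be checked after localizing at each height-one prime, since two reflexive (or locally principal) submodules of $K$ agreeing at all height-one primes are equal.

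For part (1): (ii)$\Rightarrow$(i) is immediate since a locally principal finitely generated submodule of $K$ is locally free of rank one, hence invertible. (i)$\Rightarrow$(iii): an invertible module is a direct summand of a free module, hence reflexive, by the standard fact that $(-)^{**}$ commutes with finite direct sums and is the identity on $R$. (iii)$\Rightarrow$(ii): this is the heart of it; I would argue that $\wt M_{\p} = \widetilde{(M_{\p})}$ for each prime $\p$ (formation of the double dual commutes with localization for finitely generated modules over a noetherian ring), and then over the regular local ring $R_{\p}$ — which is a UFD — every reflexive fractional ideal is principal. The cleanest route for this last point: a nonzero reflexive ideal over a noetherian integrally closed domain is the intersection of the principal fractional ideals containing it at height-one primes, and over a UFD such an intersection is itself principal (take the $\gcd$ of the generators given by the height-one data; only finitely many height-one primes are involved since $M$ is finitely generated). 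So $M$ reflexive $\Rightarrow$ $M_{\p}$ principal for all $\p$ $\Rightarrow$ $M$ is locally principal.

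For part (2): localize at a height-one prime $\p$. Then $\widetilde{M}_{\p}$ and $\widetilde{N}_{\p}$ are principal fractional ideals of the DVR $R_{\p}$, say generated by $\pi^a$ and $\pi^b$ for a uniformizer $\pi$, where $a = v_{\p}(M)$ and $b = v_{\p}(N)$ are the minimal valuations. Then $(\wt M\wt N)_{\p} = (\pi^{a+b})$ and $(\widetilde{MN})_{\p} = \widetilde{(M_{\p}N_{\p})} = (\pi^{a+b})$ as well, since the minimal valuation on $MN$ at $\p$ is $a+b$. As both $\wt M\wt N$ and $\widetilde{MN}$ are reflexive (being products/hulls of invertibles, using part (1)), agreement at all height-one primes forces equality. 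For part (3): unwind the definition $\O_X(B) = \{f\in K : (f)+B \geq 0\}$ locally; at $\p$ corresponding to a prime divisor $Z$ with coefficient $b_Z$ in $B$, $\O_X(B)_{\p} = \pi_Z^{-b_Z}R_{\p}$. Hence $\O_X(B)$ is locally principal, so invertible and reflexive by (1). Then $MN$ has $\p$-component $\pi_Z^{-(b_Z+c_Z)}R_{\p}$, which matches $\O_X(B+C)_{\p}$, and $MN$ is already reflexive so $MN = \widetilde{MN} = \O_X(B+C)$; similarly the $\p$-component of $\widetilde{M+N}$ is the smaller of $\pi_Z^{-b_Z}R_{\p}$ and $\pi_Z^{-c_Z}R_{\p}$, i.e. $\pi_Z^{-\max(b_Z,c_Z)}R_{\p} = \O_X(\max(B,C))_{\p}$, and both sides are reflexive, giving the equality.

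The main obstacle is the implication (iii)$\Rightarrow$(ii) in part (1) — concretely, the fact that a reflexive fractional ideal over a regular local ring is principal. This is where regularity (equivalently, that $R_{\p}$ is a UFD by Auslander–Buchsbaum) is genuinely used, as opposed to mere normality; everything else is formal manipulation of the divisor/valuation dictionary. I would either cite this as a standard fact (it follows from, e.g., the structure of divisorial ideals over a Krull domain together with the triviality of the divisor class group of a UFD) or give the short $\gcd$ argument sketched above. Once part (1) is in hand, parts (2) and (3) are routine localization-at-height-one-primes checks, with the only subtlety being to remember at each step that the objects compared are reflexive, so that agreement in codimension one suffices.
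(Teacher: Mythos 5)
Your proof is correct, and in spirit it follows the same route as the paper: reduce to the local case, where regularity gives a UFD (or a DVR at height-one primes), and read everything off from the gcd/valuation description of the reflexive hull. The differences are ones of bookkeeping and self-containedness. For the one nontrivial implication in part (1), (iii)$\Rightarrow$(ii), the paper simply cites Hartshorne's \emph{Stable reflexive sheaves} \cite[Proposition 1.9]{Ha80}, whereas you prove it directly from the Krull-domain identity $\wt{M} = \bigcap_{\operatorname{ht}\mf{p}=1} M_{\mf{p}}$ together with the fact that double duals commute with localization and that a finitely generated reflexive fractional ideal over a local UFD is principal (the gcd argument); this buys a self-contained proof at the cost of a little more setup. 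For part (2) the paper localizes at an arbitrary prime, clears denominators to reduce to ideals of a regular local ring, and uses the formula $\wt{M}=bR$ with $b=\gcd(a_1,\dots,a_n)$, while you check equality at height-one primes via valuations and then invoke the codimension-one principle (two reflexive submodules of $K$ agreeing at all height-one primes coincide); these are interchangeable. For part (3) the paper deduces both formulas formally from part (2) (noting that $\wt{M}$ is the smallest locally principal module containing $M$), whereas you recompute directly at DVRs; again equivalent, with your version making the divisor--valuation dictionary explicit. One trivial slip in wording: in the computation of $\wt{M+N}$ the localization $(M+N)_{\mf{p}}$ is the \emph{larger} of the two modules $\pi^{-b_Z}R_{\mf{p}}$ and $\pi^{-c_Z}R_{\mf{p}}$, not the smaller; the formula $\pi^{-\max(b_Z,c_Z)}R_{\mf{p}}$ that you actually use is the right one, so this does not affect the argument.
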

\begin{proof}
(1) The implications $(i) \Longleftrightarrow (ii)$ and $(ii) \implies (iii)$ are standard and don't even require $R$ to be regular.  
The less obvious implication $(iii) \implies (ii)$ can be found in \cite[Proposition 1.9]{Ha80}.

(2) This can be proved locally, so we can assume that $R$ is regular local and hence a UFD.   By 
multiplying by a suitable element of $K$, we can assume that $M$ and $N$ are ideals of $R$.
Then it is easy to see that if $M = a_1R + \dots + a_n R$, then $\wt{M} = bR$ where $b = \gcd(a_1, \dots, a_n)$.  The result easily follows from this.

(3)  The first formula follows from part (2).  The argument in part (2) also shows that the reflexive hull of $M$ is the unique smallest locally principal submodule of $K$ containing $M$, from which the 
second formula follows.
\end{proof}

The following definition and lemma adapt to the affine case a concept from \cite{AS}, which was studied in that paper for sequences of 
divisors on projective curves only.
\begin{definition}
\label{def:divseq}
Let $E_0, E_1, E_2, \dots$ be a sequence of divisors on $X = \spec R$, where $R$ is a regular domain 
with automorphism $\sigma: R \to R$ incuding $\sigma: X \to X$.  We say that $\{E_i \}$ is a \emph{$\sigma$-divisor sequence}
if $E_0 = 0$, $E_i + \sigma^{-i}(E_j) \leq E_{i + j}$ for all $i, j \geq 0$, and
$E_n = \max_{i=1}^r (E_i + \sigma^{-i}E_{n-i})$ for all $n > r$, some $r \geq 1$.
\end{definition}

\begin{lemma}
\label{lem:tildalg}
Let $A = \bigoplus_{n \geq 0} W_n t^n$ be a subalgebra of $K[t, t^{-1}; \sigma]$, where $K$ is the field of fractions of the regular $k$-algebra $R = A_0$ with automorphism $\sigma$.  Then $\wt{A} = \bigoplus_{n \in \mb{Z}} \wt{W_n} t^n$ is also a subalgebra of $K[t, t^{-1}; \sigma]$.
If $A_{\geq 0}$ is quasi-finitely generated and we write $\wt{W_n} = \mc{O}_X(D_n)$ for each $n$, then $D_0, D_1, \dots$ is a $\sigma$-divisor sequence.
\end{lemma}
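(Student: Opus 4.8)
The plan is to reduce everything to the formal calculus of reflexive hulls over the regular ring $R$ recorded in Lemma~\ref{lem:regfacts}, after two routine preliminaries. First, reflexive hull commutes with the automorphism: $\sigma^i(\wt{M}) = \wt{\sigma^i(M)}$ for every $R$-submodule $M \subseteq K$, which follows at once from $\sigma(\Hom_R(M,R)) = \Hom_R(\sigma(M),R)$ (valid since $\sigma$ is a $k$-automorphism of $R$). Second, because the reflexive hull operation is monotone and idempotent, $\wt{M+N} = \wt{\wt{M}+\wt{N}}$ for submodules $M, N \subseteq K$, and by induction the same identity holds for any finite sum. I will also use the elementary equivalence $\mc{O}_X(B) \subseteq \mc{O}_X(C) \iff B \leq C$: since $\mc{O}_X(B)\cap\mc{O}_X(C) = \mc{O}_X(\min(B,C))$, a containment forces $\mc{O}_X(B) = \mc{O}_X(\min(B,C))$, hence $B = \min(B,C)$ by the uniqueness in the module--divisor dictionary recalled before Lemma~\ref{lem:regfacts}.

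For the first assertion, note $1 \in \wt{W_0} = \wt{R} = R$, so it remains to prove $\wt{W_m}t^m \cdot \wt{W_n}t^n \subseteq \wt{W_{m+n}}t^{m+n}$, i.e. $\wt{W_m}\cdot\sigma^m(\wt{W_n}) \subseteq \wt{W_{m+n}}$ as $R$-submodules of $K$. By the first preliminary, $\sigma^m(\wt{W_n}) = \wt{\sigma^m(W_n)}$, and then $\wt{W_m}\cdot\wt{\sigma^m(W_n)} = \wt{W_m\,\sigma^m(W_n)}$ by Lemma~\ref{lem:regfacts}(2). Since $A$ is a subalgebra we have $W_m\,\sigma^m(W_n) \subseteq W_{m+n}$, and taking reflexive hulls (monotone) yields the desired inclusion. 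Here $W_n = 0$ for $n < 0$, so $\wt{A}$ is in fact concentrated in degrees $\geq 0$.

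Now assume $A_{\geq 0}$ is quasi-finitely generated and, as in the statement, that $\wt{W_n} = \mc{O}_X(D_n)$ for $n \geq 0$ (so each $\wt{W_n}$ is invertible and $D_n$ is the unique such divisor). Then $D_0 = 0$ because $\wt{W_0} = R = \mc{O}_X(0)$. For the inequalities $D_i + \sigma^{-i}(D_j) \leq D_{i+j}$ ($i, j \geq 0$): the inclusion established above gives $\mc{O}_X(D_i)\cdot\sigma^i(\mc{O}_X(D_j)) \subseteq \mc{O}_X(D_{i+j})$, and using $\sigma^i(\mc{O}_X(D_j)) = \mc{O}_X(\sigma^{-i}(D_j))$ together with $\mc{O}_X(B)\mc{O}_X(C) = \mc{O}_X(B+C)$ from Lemma~\ref{lem:regfacts}(3), this reads $\mc{O}_X(D_i + \sigma^{-i}(D_j)) \subseteq \mc{O}_X(D_{i+j})$; by the equivalence above this is $D_i + \sigma^{-i}(D_j) \leq D_{i+j}$. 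Finally, Lemma~\ref{lem:fg} supplies $r \geq 1$ with $W_n = \sum_{i=1}^r W_i\,\sigma^i(W_{n-i})$ for $n > r$. Taking reflexive hulls, and using the finite-sum form of $\wt{M+N} = \wt{\wt{M}+\wt{N}}$, Lemma~\ref{lem:regfacts}(2), and $\sigma^i(\wt{W_{n-i}}) = \wt{\sigma^i(W_{n-i})}$, we get $\wt{W_n} = \wt{\,\sum_{i=1}^r \mc{O}_X(D_i + \sigma^{-i}(D_{n-i}))\,}$, which equals $\mc{O}_X\!\bigl(\max_{i=1}^r(D_i + \sigma^{-i}(D_{n-i}))\bigr)$ by the $\max$-formula in Lemma~\ref{lem:regfacts}(3). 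Comparing divisors gives $D_n = \max_{i=1}^r(D_i + \sigma^{-i}(D_{n-i}))$ for $n > r$, so $\{D_n\}$ meets all three conditions of Definition~\ref{def:divseq}.

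I do not expect a genuine obstacle: the argument is entirely the bookkeeping above. The only points needing a little care are keeping straight that $\sigma$ acts on line bundles by $\sigma^i(\mc{O}_X(D)) = \mc{O}_X(\sigma^{-i}(D))$ (the inverse is easy to misplace), and setting up cleanly the three auxiliary facts --- compatibility of reflexive hull with $\sigma$, the identity $\wt{M+N} = \wt{\wt M + \wt N}$ and its finite-sum version, and the equivalence $\mc{O}_X(B)\subseteq\mc{O}_X(C)\iff B\leq C$ --- each of which is a one-line consequence of monotonicity and idempotence of the reflexive hull together with uniqueness in the module--divisor correspondence.
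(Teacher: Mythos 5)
Your proof is correct and follows essentially the same route as the paper: translate the inclusions $W_m\,\sigma^m(W_n) \subseteq W_{m+n}$ and the quasi-finite-generation identity into statements about reflexive hulls via Lemma~\ref{lem:regfacts}(2)(3), then read them off as the divisor conditions of Definition~\ref{def:divseq}. The only difference is that you spell out the routine auxiliary facts (compatibility of $\wt{(\ \cdot\ )}$ with $\sigma$, the finite-sum $\max$ formula, and the equivalence $\mc{O}_X(B)\subseteq\mc{O}_X(C)\iff B\leq C$) which the paper's two-sentence proof leaves implicit.
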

\begin{proof}
It is immediate that $\wt{A}$ is a subalgebra, since $W_n \sigma^n(W_m) \subseteq W_{n +m}$ implies 
$\wt{W_n} \wt{\sigma^n(W_m)} \subseteq \wt{W_{n+m}}$ for all $n,m$ by Lemma~\ref{lem:regfacts}(2).
Equivalently, $D_n + \sigma^{-n}(D_m) \leq D_{m+n}$ for all $m, n \in \mb{Z}$.
Since $A_{\geq 0}$ is quasi-finitely generated, we have $r \geq 1$ such that $\sum_{i = 1}^r W_i \sigma^i(W_{n-i}) = W_n$ for all $n > r$.
By Lemma~\ref{lem:regfacts}(3) this is equivalent to $D_n = \max_{i=1}^r (D_i + \sigma^{-i}D_{n-i})$ for all $n > r$.
Finally, obviously $D_0 = 0$ since $\wt{W_0} = R$.  Thus $D_0, D_1, \dots$ is a $\sigma$-divisor sequence.  
\end{proof}
   
The basic combinatorial analysis of $\sigma$-divisor sequences, which was worked out in \cite{AS}, goes through in our setting 
as follows.
\begin{lemma}
\label{comb-lem}
\label{lem:AS-comb}
Let $X = \spec R$ for a commutative noetherian regular $k$-algebra $R$ with automorphism $\sigma: R \to R$.
Let $E_0, E_1, E_2, \dots$ be a $\sigma$-divisor sequence on $X$, where all irreducible divisors in the support of each $E_i$ lie on infinite $\sigma$-orbits.  Then there are divisors $G$ and $\Omega$, where $\Omega$ is effective, such that $E_n = G_n - \Omega$  for all $n \gg 0$, where $G_n = G + \sigma^{-1}(G) + \dots + \sigma^{-n+1}(G)$.
\end{lemma}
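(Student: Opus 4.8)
The plan is to reduce the statement to a combinatorial fact about non-negative integer arrays, following the argument of \cite{AS} but replacing projective curves by affine schemes. First I would reduce to a single $\sigma$-orbit: each $E_i$ is a divisor and so has finite support, and the generation relation forces $\supp E_n\subseteq\bigcup_{j=0}^{n}\sigma^{-j}\big(\bigcup_{i\le r}\supp E_i\big)$ for $n>r$, so only finitely many $\sigma$-orbits of prime divisors occur among all the $E_n$. The defining conditions of a $\sigma$-divisor sequence and the desired conclusion are preserved under restricting every $E_n$ to the prime divisors on a fixed orbit, and combining the $G$'s and $\Omega$'s obtained orbit by orbit settles the general statement. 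So assume every $E_n$ is supported on the infinite orbit of a prime divisor $Z$; write $Z_i=\sigma^{-i}(Z)$ (these are distinct, with $\sigma^j(Z_i)=Z_{i-j}$) and $E_n=\sum_i e_{n,i}Z_i$. The conditions become: $e_{0,i}=0$; $e_{p,i}+e_{q,i-p}\le e_{p+q,i}$ for $p,q\ge0$; $e_{n,i}=\max_{1\le p\le r}(e_{p,i}+e_{n-p,i-p})$ for $n>r$; and there are fixed integers $c_0\le c_1$ with $e_{n,i}=0$ unless $c_0\le i\le c_1+n$.

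Next I would reduce to the case $E_1=0$. The sequence $(E_1)_n$ of Definition~\ref{def:Gn} (with $G=E_1$) satisfies $(E_1)_p+\sigma^{-p}((E_1)_q)=(E_1)_{p+q}$ by the divisor analogue of Lemma~\ref{lem:easyGfacts}(2), so $E'_n:=E_n-(E_1)_n$ is again a $\sigma$-divisor sequence with $E'_1=0$; iterating the $p=1$ case of subadditivity gives $E_n\ge(E_1)_n$, so each $E'_n$ is effective; and a representation $E'_n=G'_n-\Omega$ gives $E_n=(G'+E_1)_n-\Omega$. Thus assume $E_1=0$, so all $e_{n,i}\ge0$. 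Two monotonicities follow: $p=1$ subadditivity gives $e_{n,i}\ge e_{n-1,i-1}$ (non-decreasing along each diagonal $i-n=\mathrm{const}$), and $p=n-1$ subadditivity gives $e_{n,i}\ge e_{n-1,i}$ (non-decreasing in $n$ for fixed $i$). Combined with the support bound, $T_n:=\max\{i:e_{n,i}\ne0\}$ satisfies $T_{n-1}+1\le T_n\le c_1+n$, so $T_n-n$ is non-decreasing and bounded, hence $T_n=n+\tau$ for some $\tau$ and all $n\gg0$: the top of the support moves rigidly.

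The heart of the proof is a uniform bound on the coefficients. Expanding the generation relation greedily expresses $e_{n,i}$ as a sum of coefficients $e_{m,\cdot}$ with $m\le r$, the second indices being $i-(p_1+\dots+p_{k-1})$ for strictly increasing partial sums of integers $p_k\in\{1,\dots,r\}$; since each such coefficient vanishes unless its second index lies in the fixed window $[c_0,c_1]$, at most $c_1-c_0+1$ of the terms are nonzero, and each is at most $C:=\max_{m\le r,\,i}e_{m,i}$, so $e_{n,i}\le(c_1-c_0+2)C=:M$ for all $n,i$. Consequently $\widehat e_i:=\lim_n e_{n,i}$ and $\alpha_s:=\lim_n e_{n,n+s}$ exist (monotone, bounded by $M$), with $\alpha_s=0$ for $s>\tau$. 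From the recursion, for $i\ge c_1+1$ the diagonal $m\mapsto e_{m,m+(i-n)}$ is constant once $m>r$, so $e_{n,i}=\alpha_{i-n}$ for $n>r$; letting $n\to\infty$ shows $\alpha_s$ converges as $s\to-\infty$, and since $e_{n,i}=\alpha_{i-n}$ is non-decreasing in $n$ it shows $\alpha_s$ is non-increasing in $s$, with common limit $\alpha_{-\infty}=\widehat e_i$ for every $i\ge c_1+1$. Hence $g_s:=\alpha_{s-1}-\alpha_s\ge0$ is finitely supported, $G:=\sum_s g_sZ_s$ is a genuine (in fact effective) divisor, and $(G_n)_i=\sum_{l=i-n+1}^{i}g_l=\alpha_{i-n}-\alpha_i$.

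Finally I would set $\Omega:=\sum_i(\alpha_{-\infty}-\alpha_i-\widehat e_i)Z_i$, which has finite support since $\alpha_i=\widehat e_i=0$ for $i\ll0$ and $\alpha_i=0$, $\widehat e_i=\alpha_{-\infty}$ for $i\gg0$, and verify $E_n=G_n-\Omega$ for $n\gg0$ by comparing coefficients in three ranges: for $i>n+\tau$ both sides vanish; for $c_1+1\le i\le n+\tau$ both sides equal $\alpha_{i-n}$; and for the fixed finite set $i\le c_1$ each coefficient of both sides is eventually constant and one computes the common value to be $\widehat e_i$. Effectivity of $\Omega$ amounts to $\alpha_i+\widehat e_i\le\alpha_{-\infty}$ for all $i$, which I would get from the subadditivity $e_{p,\,p+i}+e_{n-p,\,i}\le e_{n,\,p+i}$: letting $p\to\infty$ and $n-p\to\infty$, the left side tends to $\alpha_i+\widehat e_i$ while the right side, being $\alpha_{(p+i)-n}$ for $p$ large, tends to $\alpha_{-\infty}$. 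I expect the uniform coefficient bound of the third step to be the main obstacle: it is exactly what makes $G$ an honest divisor rather than an infinite formal sum, and it underlies the existence of all the limits; once it is in hand the rest is bookkeeping to upgrade coefficientwise limits to eventual equalities of divisors, which is controlled by the rigid top $T_n=n+\tau$ and the fact that the transient region $\{i\le c_1\}$ is a fixed finite set.
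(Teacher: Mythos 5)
Your proposal is correct, but it takes a genuinely different route from the paper's. The paper handles possibly non-effective terms by adding $H_n$ for a suitably large effective divisor $H$, and then simply invokes the combinatorial results of Artin and Stafford (\cite[Lemmas 2.17 and 5.8(i),(ii)]{AS}), observing that although those are stated for divisors on a projective curve, only the combinatorics of coefficients supported on infinite $\sigma$-orbits is used. You instead give a self-contained argument: restrict to a single orbit, normalize by subtracting $(E_1)_n$ so that $E_1=0$ and all terms become effective, obtain a uniform bound on the coefficients by greedily expanding the generation relation (the strictly decreasing second indices can meet the fixed support window of $E_1,\dots,E_r$ only boundedly many times), and then build $G$ from the diagonal limits $\alpha_s$ and $\Omega$ from the column limits $\widehat e_i$, with effectivity of $\Omega$ obtained by passing the subadditivity inequality to the limit along $n=2p$. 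The paper's route buys brevity, outsourcing the hard combinatorics to \cite{AS}; yours buys a proof verifiable without consulting \cite{AS}, and it isolates exactly which combinatorial inputs (eventual monotonicity, the rigid top $T_n=n+\tau$, the uniform bound) are needed. Two small slips, neither affecting the argument: the nonvanishing window for the terms $e_{m,\cdot}$ with $m\le r$ is $[c_0,c_1+r]$ rather than $[c_0,c_1]$, which only changes the constant in the uniform bound; and in checking that $\Omega$ has finite support, for $i\ll 0$ one has $\alpha_i=\alpha_{-\infty}$ and $\widehat e_i=0$ (not $\alpha_i=0$), which is precisely what makes $\Omega_i=\alpha_{-\infty}-\alpha_i-\widehat e_i$ vanish there; as literally written, $\alpha_i=\widehat e_i=0$ would give $\Omega_i=\alpha_{-\infty}$ for all $i\ll 0$ and would contradict finite support whenever $\alpha_{-\infty}\neq 0$.
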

\begin{proof}
In \cite{AS} the divisors in a $\sigma$-divisor sequence are assumed to be effective, and so we first 
discuss how to remove this restriction.   Let $r$ be the integer such that $E_n = \max_{i=1}^r (E_i + \sigma^{-i}(E_{n-i}))$ for all $n > r$.
Choose an effective divisor $H$ large enough so that $E_i + H$ is also effective for $1 \leq i \leq r$.   Putting 
$H_n = H + \sigma^{-1}(H) + \dots + \sigma^{-n+1}(H)$ for all $n \geq 0$, then $H_i \geq H$ for $i \geq 1$ and so 
$E'_i = E_i + H_i$ is effective for $1 \leq i \leq r$.  Then setting $E'_n = E_n + H_n$, clearly 
 $E'_0, E'_1, \dots$ is still a $\sigma$-divisor sequence, for the same $r$, and $E'_n$ is effective 
for all $n \geq 0$ by induction.   Now if we prove that the lemma holds 
for the sequence $E'_i$, we obtain $G'$ and $\Omega  \geq 0$ such that $E'_n = G'_n - \Omega$ for $n \gg 0$.  Then 
$E_n = G_n - \Omega$ for all $n \gg 0$, where $G = G' - H$.

So now we may assume that the $E_i$ are effective.   Then \cite[Lemma 2.17]{AS} provides an effective 
divisor $\Omega$ such that the sequence $D_n = E_n + \Omega$ satisfies $D_i + \sigma^{-j}(D_i) = D_{i+j}$ for all $i,j \gg 0$. 
Note that while \cite[Lemma 2.17]{AS} is stated only for divisors on a projective curve, all that is really used is the combinatorics of the coefficients of these 
divisors that results from the $\sigma$-divisor sequence condition, together with the assumption that all of the 
irreducible divisors occurring lie on infinite $\sigma$-orbits.   
Finally, we claim that $G = D_{n+1} - \sigma^{-1}(D_n)$ is independent of the choice of $n \gg 0$, and that with this choice 
of $G$ we have $G_n = G + \sigma^{-1}(G) + \dots + \sigma^{-n+1}(G) = D_n$ for all $n \gg 0$.  This is the same 
as what is proved in \cite[Lemma 5.8(i)(ii)]{AS}.
\end{proof}

In the last results of this section, we prove that up to a fairly trivial kind of adjustment, in the study of 
birationally commutative $\mb{Z}$-graded algebras $A \subseteq K[t, t^{-1}; \sigma]$ where $A_0 = R$ is regular
and $\sigma(R) = R$, we can reduce to the convenient case that $A \subseteq R[t, t^{-1}, \sigma]$.
\begin{lemma}
\label{lem:triv-change}
Let $R$ be a commutative noetherian regular ring with automorphism $\sigma$, and let $K$ be its fraction field.
Let $A = \bigoplus_{n \in \mb{Z}} W_n t^n$ be a subalgebra of $K[t, t^{-1}; \sigma]$, where $A_0 = R$ and $W_n \neq 0$ for all $n \in \mb{Z}$. Choose  any locally principal $R$-submodule $M \subseteq K$.
Write $M = \mc{O}_X(D)$, and let $M_n = \mc{O}_X(D_n)$ for each $n \in \mb{Z}$, where $D_n$ is defined in terms of $D$ as in 
Definition~\ref{def:Gn}.

Then $B = \bigoplus_{n \in \mb{Z}} M_n W_n t^n$ is also a subalgebra of $K[t, t^{-1}; \sigma]$, and there 
are equivalences of categories $B \lGr  \sim A \lGr$ and $\rGr B \sim \rGr A$.  
Moreover, $B$ is simple if and only if $A$ is.  If $R$ is a UFD, then $A$ and $B$ are even isomorphic.  
\end{lemma}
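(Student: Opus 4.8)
The plan is to dispose of the four assertions in turn; that $B$ is a subring, that $B$ is simple exactly when $A$ is, and that $A\cong B$ when $R$ is a UFD are short direct arguments, while the statement about graded module categories is the real content and will be handled by a twisting-system argument. First, to see that $B=\bigoplus_n M_nW_nt^n$ is a graded subring of $K[t,t^{-1};\sigma]$: since $R$ is regular and each $D_n$ is a Weil divisor, $M_n=\mc{O}_X(D_n)$ is invertible, and combining $D_m+\sigma^{-m}(D_n)=D_{m+n}$ from Lemma~\ref{lem:easyGfacts}(2) with $\sigma^m(\mc{O}_X(D_n))=\mc{O}_X(\sigma^{-m}(D_n))$ and Lemma~\ref{lem:regfacts}(3) gives the key identity $M_m\,\sigma^m(M_n)=M_{m+n}$ for all $m,n$. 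Since $W_m\,\sigma^m(W_n)\subseteq W_{m+n}$ (because $A$ is a ring) and $R$ is commutative,
\[
B_mB_n=M_mW_m\,\sigma^m(M_nW_n)\,t^{m+n}=\bigl(M_m\sigma^m(M_n)\bigr)\bigl(W_m\sigma^m(W_n)\bigr)t^{m+n}\subseteq M_{m+n}W_{m+n}t^{m+n}=B_{m+n},
\]
and $B_0=M_0W_0=R$. The same manipulation shows that $\mf{I}=\bigoplus_n\mf{I}_nt^n\mapsto\bigoplus_n M_n\mf{I}_nt^n$ carries graded two-sided ideals of $A$ to graded two-sided ideals of $B$, with inverse the twist by $\{M_n^{-1}\}$; so it is an order-preserving bijection between the graded ideals of $A$ and of $B$, and in particular $A$ is graded simple iff $B$ is.

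For the module categories, the observation I would use is that $B$ is the \emph{twist} of $A$ by the system $\{M_n\}$ of invertible $R$-modules. Regard $A$ as a $\mb{Z}$-graded algebra object $\bigoplus_n\mathcal{A}_n$ in the monoidal category of $R$-bimodules, with $\mathcal{A}_n$ equal to $W_n$ carrying its natural left $R$-structure and a right $R$-structure twisted by $\sigma^n$, so that multiplication is $w\otimes w'\mapsto w\sigma^m(w')$; the identities $M_0=R$ and $M_m\,\sigma^m(M_n)=M_{m+n}$ proved above are exactly the cocycle conditions saying that $\{M_n\}$ is a twisting system and that $B\cong\bigoplus_n\mathcal{L}_n\otimes_R\mathcal{A}_n$ (with $\mathcal{L}_n$ the invertible bimodule attached to $M_n$) is again an associative graded algebra. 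The standard Zhang-twist argument, in the version for twists by invertible bimodules, then produces equivalences $\rGr B\sim\rGr A$ and $B\lGr\sim A\lGr$: on $N=\bigoplus_n N_n$ the functor is $N\mapsto\bigoplus_n M_n\otimes_R N_n$ with the $B$-action transported along the isomorphisms $M_n\,\sigma^n(M_m)=M_{n+m}$, the quasi-inverse is the twist by $\{M_n^{-1}\}$, and the composites are naturally isomorphic to the identity since $M_n\otimes_R M_n^{-1}=R$. These are not Morita equivalences in general, as the $M_n$ need not be free. I expect the main obstacle to be precisely the bookkeeping of the various $\sigma$-twists in these bimodule structures; once the twisting system is set up correctly, verifying that the functors are well defined and mutually quasi-inverse is formal.

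Finally I would upgrade ``graded simple'' to ``simple'' and treat the UFD case. If $A$ (say) is simple, then since $Q_{\rm gr}(A)$ is the localization of $A$ at the Ore set of nonzero homogeneous elements, every two-sided ideal of $Q_{\rm gr}(A)$ is extended from its contraction to $A$, so $Q:=Q_{\rm gr}(A)=Q_{\rm gr}(B)=K[t,t^{-1};\sigma]$ is a simple ring; hence by \cite[Lemma~2.6(2)]{BRS} (as used at the end of the proof of Proposition~\ref{prop:Bsimp}) a graded subring of $Q$ with graded quotient ring $Q$ is simple iff graded simple, and combining with the preceding paragraph, $A$ is simple iff $B$ is. And when $R$ is a UFD, $\Pic(X)=0$ forces $D=D_1$ to be principal, say $M=\mu R$ with $\mu\in K^{\times}$; then $M_m\,\sigma^m(M_n)=M_{m+n}$ forces $M_n=c_nR$ with $c_n=\mu\,\sigma(\mu)\cdots\sigma^{n-1}(\mu)$ (with $c_0=1$ and the evident reading for $n<0$), these $c_n$ satisfy $c_{m+n}=c_m\,\sigma^m(c_n)$, and $\Phi\colon A\to B$, $\Phi(wt^n)=c_nwt^n$, is then a graded $k$-algebra isomorphism --- multiplicative by that identity, and bijective because $c_nW_n=M_nW_n$.
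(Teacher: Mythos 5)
Your proposal is correct and follows essentially the same route as the paper: the identity $M_m\,\sigma^m(M_n)=M_{m+n}$ to get the ring structure, the twist functor $N\mapsto\bigoplus_n M_n\otimes_R N_n$ (with quasi-inverse the twist by $\{M_n^{-1}\}$) for the graded category equivalences, the graded-ideal correspondence together with ``graded simple iff simple'' for the simplicity transfer, and the same explicit isomorphism $a\mapsto c_n a$ when $R$ is a UFD. The only (harmless) divergence is in the simplicity step, where the paper splits into cases on the order of $\sigma$ to see that $K[t,t^{-1};\sigma]$ is simple (or that neither ring can be simple), while you deduce simplicity of the graded quotient ring directly from simplicity of $A$ (or $B$) as its localization.
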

\begin{proof}
Since $D_m + \sigma^{-m}(D_n) = D_{m+n}$ for all $m,n \in \mb{Z}$ by Lemma~\ref{lem:easyGfacts}, 
we have $M_m \sigma^m(M_n) = M_{m+n}$ for all $m,n \in \mb{Z}$.  Then it is clear that $B$ is a subalgebra of 
$K[t, t^{-1}; \sigma]$.  There is a functor which sends a $\mb{Z}$-graded left $A$-module $\bigoplus_{n \in \mb{Z}} V_n$ to the graded left $B$-module $\bigoplus_{n \in \mb{Z}} (M_n \otimes_R V_n)$, with action defined by 
$x a (y \otimes v) = x \sigma^m(y) \otimes av$, where $x \in M_m, a \in A_m, y \in M_n, v \in V_n$.  This functor is 
easily checked to give an equivalence of graded categories $A \lGr \to B \lGr$.  The equivalence 
of categories of graded right modules is similarly routine.

If $\sigma$ has finite order, then $K[t, t^{-1}; \sigma]$ is finite over its center and thus a PI ring; so neither $A$ nor 
$B$ can be simple since a simple PI domain is a division ring.   On the other hand, if $\sigma$ has infinite order, then the ring $K[t, t^{-1}; \sigma]$ is simple, and the same argument we have already seen in Proposition~\ref{prop:Bsimp}(3) shows that $A$ is simple if and only if it is graded simple, and similarly for $B$.   But it is trivial to check that $A$ is graded simple if and only if $B$ is.

Finally, if $R$ is a UFD, then $M$ is principal, say $M = x R$ for $x \in K$.   Defining $x_0 = 1$, $x_n = x \sigma(x) \dots \sigma^{n-1}(x)$ for $n \geq 1$, and $x_{-n} = [\sigma^{-1}(x) \dots \sigma^{-n}(x)]^{-1}$ for $n \geq 1$, the map $\phi: A \to B$ defined by $\phi(a) = x_n a$ for 
$a \in A_n$ is easily checked to be an isomorphism.
\end{proof}

\begin{definition}
Given $A$ and $B$ as in the previous lemma, we say that $B$ is a \emph{$\Pic(X)$-twist} of $A$.
\end{definition}
\noindent It is not hard to see that $\Pic(X)$-twists need not be isomorphic or even Morita equivalent in general.  Still, a $\Pic(X)$-twist is a simple operation which preserves many properties of a graded ring.   
\begin{lemma}
\label{lem:trivchange}
Let $R$ be a commutative noetherian regular ring with automorphism $\sigma$, and let $K$ be its fraction field.
Let $A = \bigoplus_{n \in \mb{Z}} W_n t^n$ be a subalgebra of $K[t, t^{-1}; \sigma]$, where $A_0 = R$ and $W_n \neq 0$ for all $n \in \mb{Z}$.
Assume that $A_{\geq 0}$ is quasi-finitely generated.  Then 
there is an invertible $R$-module $M \subseteq K$ such that the $\Pic(X)$-twist
$B = \bigoplus_{n} M_n W_n t^n$ satisfies $B \subseteq R[t, t^{-1}; \sigma]$.
\end{lemma}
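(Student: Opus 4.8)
The plan is to produce $M$ by exhibiting, after passing to reflexive hulls, a suitable Weil divisor on $X=\spec R$. First note each $W_n$ is a nonzero finitely generated $R$-submodule of $K$: picking $0\neq b\in W_{-n}$, the relation $A_{-n}A_n\subseteq A_0$ gives $b\,\sigma^{-n}(W_n)\subseteq R$, so $\sigma^{-n}(W_n)\subseteq b^{-1}R$ is finitely generated over the Noetherian ring $R$, hence so is $W_n$. Thus $\wt{W_n}:=W_n^{\ast\ast}$ is an invertible $R$-module (Lemma~\ref{lem:regfacts}(1)) and we may write $\wt{W_n}=\mc{O}_X(E_n)$. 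The key reduction is: \emph{it suffices to find a single divisor $G$ on $X$ with $G_n\ge E_n$ for all $n\in\mb{Z}$}, where $G_n$ is built from $G$ as in Definition~\ref{def:Gn}. Indeed, set $D=-G$ and $M=\mc{O}_X(D)$; then $M$ is invertible, the $\Pic(X)$-twist $B=\bigoplus_n M_nW_nt^n$ is a subalgebra by Lemma~\ref{lem:triv-change}, and since $M_n=\mc{O}_X(D_n)$ one computes with Lemma~\ref{lem:regfacts} that $\wt{B_n}=M_n\wt{W_n}=\mc{O}_X(D_n)\mc{O}_X(E_n)=\mc{O}_X(E_n-G_n)\subseteq\mc{O}_X(0)=R$; hence $B_n\subseteq\wt{B_n}\subseteq R$ for every $n$, i.e. $B\subseteq R[t,t^{-1};\sigma]$.

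Next I would assemble the combinatorial input on $(E_n)_{n\in\mb{Z}}$. The argument in the proof of Lemma~\ref{lem:tildalg} (which uses only $W_n\sigma^n(W_m)\subseteq W_{n+m}$, and so applies verbatim to $\mb{Z}$-gradings) shows $\wt A=\bigoplus_n\wt{W_n}t^n$ is a subalgebra of $K[t,t^{-1};\sigma]$, whence
\[
E_m+\sigma^{-m}(E_n)\le E_{m+n}\quad\text{for all }m,n\in\mb{Z},\qquad E_0=0.
\]
Applying Lemma~\ref{lem:tildalg} to the quasi-finitely generated algebra $A_{\ge 0}$ shows in addition that $(E_n)_{n\ge 0}$ is a $\sigma$-divisor sequence, so $E_n=\max_{i=1}^r(E_i+\sigma^{-i}(E_{n-i}))$ for $n>r$. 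From these facts one obtains, for $m\ge 0$, the sandwich
\[
\textstyle\sum_{j=0}^{m-1}\sigma^j(E_{-1})\ \le\ E_{-m}\ \le\ -\sigma^m(E_m),
\]
whose two outer terms are supported on the finitely many $\sigma$-orbits of irreducible divisors occurring in $E_{-1},E_1,\dots,E_r$; hence so is every $E_n$. I can therefore split off the finitely many \emph{finite} $\sigma$-orbits --- on which the required inequality $G_n\ge E_n$ is arranged by a direct elementary computation with eventually periodic sequences --- and assume henceforth that all these orbits are infinite.

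The crux is then the construction of $G$ in the infinite-orbit case. By Lemma~\ref{lem:AS-comb} applied to $(E_n)_{n\ge 0}$, there are a divisor $G^0$ and an \emph{effective} divisor $\Omega$ with $E_n=G^0_n-\Omega$ for all $n\ge N$, some $N$. The decisive move --- and the one that makes everything come out in negative degrees, where we have \emph{no} finiteness hypothesis --- is to correct the ``slope'' $G^0$ by a bounded term and put
\[
G:=G^0-\Omega+\sigma^{-1}(\Omega),
\]
so that telescoping and Lemma~\ref{lem:easyGfacts} give $G_n=G^0_n+\sigma^{-n}(\Omega)-\Omega$ for all $n\in\mb{Z}$ (in particular $G_0=0$). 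The inequality $G_n\ge E_n$ then follows from a single application of superadditivity with a large auxiliary index: for $n\ge 0$ use $E_n+\sigma^{-n}(E_l)\le E_{n+l}$ with $l\ge N$, and for $n=-m<0$ use $E_{-m}+\sigma^m(E_l)\le E_{l-m}$ with $l\ge N+m$; in each case all the $E_\bullet$-terms except $E_n$ (resp.\ $E_{-m}$) lie in the stable range where $E_\bullet=G^0_\bullet-\Omega$, and after cancelling the common $G^0$-contribution via Lemma~\ref{lem:easyGfacts}(2) one is left with precisely $E_n\le G^0_n+\sigma^{-n}(\Omega)-\Omega=G_n$ (resp.\ $E_{-m}\le G^0_{-m}+\sigma^m(\Omega)-\Omega=G_{-m}$). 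This proves $G_n\ge E_n$ for all $n$, and hence the lemma.

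The step I expect to be the main obstacle is exactly the negative degrees. Because only $A_{\ge 0}$ (not $A_{\le 0}$) is assumed quasi-finitely generated, the sequence $(E_n)_{n\le 0}$ obeys no ``max formula'', so Lemma~\ref{lem:AS-comb} pins down $E_n$ only for $n\gg 0$, and naive choices of $G$ (e.g.\ $G=G^0$, or $G=E_1$) always fail either in high positive or in high negative degree. Finding the right bounded correction of $G^0$ --- recognizing that the single relation $E_{-m}+\sigma^m(E_l)\le E_{l-m}$ for large $l$ already controls $E_{-m}$ against $G^0$, and that the term $\sigma^{-n}(\Omega)-\Omega$ absorbs the resulting discrepancy symmetrically in both directions --- is the heart of the argument; the reduction to infinite $\sigma$-orbits needed to invoke Lemma~\ref{lem:AS-comb} is a minor secondary point.
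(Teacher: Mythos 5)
Your core argument is correct where it applies, and it takes a genuinely different route from the paper's in the negative degrees. After the common first steps (finite generation of each $W_n$, passing to reflexive hulls, Lemma~\ref{lem:tildalg}, and Lemma~\ref{lem:AS-comb} giving $E_n=G^0_n-\Omega$ for $n\gg 0$), you build the corrected divisor $G=G^0-\Omega+\sigma^{-1}(\Omega)$ and verify $G_n\ge E_n$ for all $n\in\mb{Z}$ by two superadditivity estimates with a large auxiliary index; I checked the telescoping identity $G_n=G^0_n+\sigma^{-n}(\Omega)-\Omega$ and both estimates, and they are sound. The paper instead takes the uncorrected $M=\mc{O}_X(-G^0)$, observes that $\wt{B}_n=\mc{O}_X(-\Omega)=:I$ is a fixed invertible ideal for $n\gg 0$, and then uses $\wt{B}_n\wt{B}_m\subseteq\wt{B}_{n+m}$ for $n,\,n+m\gg 0$ together with $\Hom_R(I,I)=R$ to cancel $I$ and get $M_m\wt{W}_m\subseteq R$ for every $m$ (negative degrees included) in one stroke. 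In particular your aside that the choice $G=G^0$ ``always fails'' is not accurate: it cannot be verified by superadditivity alone, but the cancellation against the invertible ideal $I$ shows $G^0_n\ge E_n$ for all $n$, so no correction term is needed. Your explicit verification that each $W_n$ is finitely generated is a useful point the paper leaves implicit.

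The genuine gap is precisely the step you dismiss as minor: the finite $\sigma$-orbits. No ``direct elementary computation with eventually periodic sequences'' can supply the divisor $G$ on such orbits, because the conclusion of the lemma can actually fail there. Take $R=k[x]$, $\sigma=\operatorname{id}$, $W_n=x^{-\lfloor n/2\rfloor}R$ for $n\ge 0$ and $W_{-n}=x^{\lceil n/2\rceil}R$ for $n\ge 1$: this is a subalgebra of $K[t,t^{-1};\sigma]$ with $A_0=R$, all $W_n\neq 0$, and $A_{\ge 0}$ generated in degrees $0,1,2$; yet if $M=gR$ were an invertible module with $M_nW_n\subseteq R$ for all $n$, then $M_2W_2\subseteq R$ forces the order of vanishing of $g$ at $(x)$ to be at least $1$, while $M_{-2}W_{-2}\subseteq R$ forces it to be at most $0$ --- the ``slope'' $1/2$ is not an integer. (A variant with $\sigma$ of infinite order but fixing the prime divisor $(x-1)$ in $k[x^{\pm 1},y]$, $\sigma(y)=y+1$, shows the issue is finite orbits, not triviality of $\sigma$.) So your finite-orbit reduction is not a secondary point but the place where the statement, as literally phrased, breaks. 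To be fair, the paper's own proof has the same blind spot: it invokes Lemma~\ref{lem:AS-comb} without checking its infinite-orbit hypothesis. The correct repair, for both your argument and the paper's, is to assume (or note that in the intended applications, e.g.\ under Hypothesis~\ref{hyp:main} where $R$ is $\sigma$-simple, one automatically has) that every prime divisor occurring in the supports lies on an infinite $\sigma$-orbit, rather than to attempt a separate treatment of finite orbits.
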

\begin{proof}
Let $\wt{A} = \bigoplus_n \wt{W}_n t^n$ as in  Lemma~\ref{lem:tildalg}.  Writing $\wt{W}_n = \mc{O}_X(D_n)$, 
by that same result $D_0, D_1, \dots$ is a $\sigma$-divisor sequence.   By Lemma~\ref{lem:AS-comb},
$D_n = G_n - \Omega$ for all $n \gg 0$, for some effective $\Omega$ and $G_n = G + \sigma^{-1}(G) + \dots + \sigma^{-n+1}(G)$.  
Choose $M = \mc{O}_X(-G) = \mc{O}_X(G)^{-1}$, and let $B = \bigoplus_{n} M_n W_n t^n$ 
and $\wt{B} = \bigoplus_{n} M_n \wt{W}_n t^n$.   Then $\wt{B}_n = \mc{O}_X(-\Omega)t^n$ for all $n \gg 0$, 
where $I = \mc{O}_X(-\Omega)$ is some ideal of $R$.  

Since $\wt{B}$ is an algebra, for any $m \in \mb{Z}$ and $n \gg 0$ we have $\wt{B}_n \wt{B}_m \subseteq \wt{B}_{n + m}$,  and thus $I \sigma^n(M_m \wt{W}_m) \subseteq I$.  But $\Hom_R(I,I) = R$ since $I$ is invertible.  Thus 
$\sigma^n(M_m \wt{W}_m) \subseteq R$ and hence $M_m \wt{W}_m \subseteq R$.  
We conclude that $B \subseteq \wt{B} \subseteq R[t, t^{-1}; \sigma]$.
\end{proof}

\section{Classification of simple birationally commutative $\mb{Z}$-graded rings}
\label{sec:classify}

Starting in this section, we work towards a kind of converse to the results of the Section~\ref{sec:somerings}. Namely,  
we aim to find rather general hypotheses on birationally commutative simple $\mb{Z}$-graded algebras 
under which we can classify them, and more specifically show that they are closely related to the rings $B(G, H, J)$.
The following main hypothesis for this section contains the most general conditions under which 
we are able to prove our classification theorem.
\begin{hypothesis}
\label{hyp:main}
\label{UFD-hyp}
Let $k$ be algebraically closed.  Let $A$ be a simple, quasi-finitely generated $\mb{Z}$-graded $k$-algebra which is an Ore domain 
with $A_i \neq 0$ for all $i \in \mb{Z}$, and such that $Q_{\operatorname{gr}}(A) = K[t, t^{-1}; \sigma]$ for some field $K$ with automorphism $\sigma$.  Assume that either $\cha k = 0$ or that $\trdeg(K/k) = 1$.  Assume further that $R = A_0$ is a noetherian $k$-algebra such that (i) the integral closure of $R$ is a finite  $R$-module; and (ii) the singular locus of $X = \spec R$ is a proper closed subset of $X$.  
\end{hypothesis}
\noindent  
\begin{remark}
We comment on some of the conditions in the hypothesis above.
As remarked after Lemma~\ref{lem:basics}, conditions (i) and (ii) hold for most reasonable commutative noetherian rings.
It is immediate from Lemma~\ref{lem:basics}(3) that under the hypothesis above, in fact $R$ must be regular, $\sigma(R) = R$, and 
$R$ is $\sigma$-simple.   When we assume Hypothesis~\ref{hyp:main} we will use these facts without further comment.

The assumption that $A_0$ is noetherian is natural; it is easy to see that if $A$ is noetherian then so is $A_0$, so we might as well make 
the weaker assumption.  Finally, the assumption that $\cha k = 0$ or $\trdeg(K/k) = 1$ is made to  overcome a technical obstacle in the proof below, but we suspect that the main classification theorem is true without it.
\end{remark}

As we saw in the previous section, given an algebra $A$ satisfying Hypothesis~\ref{hyp:main}, then after a $\Pic(X)$-twist we can 
assume that $A \subseteq R[t, t^{-1}; \sigma]$, and we will do so in the analysis in the rest of the section.

The following result restates the hypothesis that $A$ is simple in a more convenient form.
\begin{lemma}
\label{simple-crit-lem1}
Let $A  = \bigoplus_{n \in \mb{Z}} I_n t^n \subseteq R[t, t^{-1}; \sigma]$ satisfy Hypothesis~\ref{hyp:main}.
Then for all $n \geq 1$, we have
\[
\sum_{i \geq n} \big(A_iA_{-i} + A_{-i}A_i \big)= \sum_{i \geq n} \big(I_i \sigma^i(I_{-i}) + I_{-i} \sigma^{-i}(I_i)\big) =  R.
\]
\end{lemma}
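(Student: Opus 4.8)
The first equality is just the multiplication rule in $R[t,t^{-1};\sigma]$: writing $A_n = I_nt^n$ one has $A_iA_{-i} = I_it^i\,I_{-i}t^{-i} = I_i\,\sigma^i(I_{-i})$ and $A_{-i}A_i = I_{-i}\,\sigma^{-i}(I_i)$, both honest ideals of $R = A_0$. So, setting $\mf{c}_n = \sum_{i\ge n}(A_iA_{-i}+A_{-i}A_i)$ --- which is an ideal of $R$ because $R\cdot A_iA_{-i}\cdot R\subseteq A_iA_{-i}$ and similarly for $A_{-i}A_i$ --- the whole content of the lemma is that $\mf{c}_n = R$. Note $\mf{c}_n\neq 0$: since $A$ is a domain with $A_n\neq 0$, the product $A_nA_{-n}$ is a nonzero ideal contained in $\mf{c}_n$. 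Since $\mf{c}_1\supseteq\mf{c}_2\supseteq\cdots$, it is enough to establish $\mf{c}_n=R$ for arbitrarily large $n$, as then $\mf{c}_m\supseteq\mf{c}_n=R$ for all $m\le n$.

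The plan is to exploit simplicity of $A$. The two-sided ideal $\mathcal{I}_n$ of $A$ generated by the homogeneous pieces $\{A_i\mid |i|\ge n\}$ is nonzero, hence equals $A$, so $1\in(\mathcal{I}_n)_0$. One expands $1$ as a sum of products of homogeneous elements of total degree $0$, at least one factor of which has degree of absolute value $\ge n$; using quasi-finite generation of $A$ together with Lemma~\ref{lem:fg} (and its mirror image for $A_{\le 0}$) one may take all remaining factors to have degree of absolute value at most a fixed $r$. Whenever a partial product $a_1\cdots a_\ell$ attains a degree $d$ with $|d|\ge n$, the corresponding summand lies in $A_dA_{-d}\subseteq\mf{c}_n$ and may be discarded; the same is true if a tail has such a degree. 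The remaining ``middle-degree'' summands --- those whose partial degrees never leave $(-n,n)$ --- must be handled by re-factoring the one large-degree factor into pieces of bounded degree and regrouping, using the identity $A_m\,\mf{e}\,A_{-m}=\sigma^m(\mf{e})\cdot(A_mA_{-m})$, valid for every ideal $\mf{e}\subseteq R$ and every $m\in\mb{Z}$, to absorb such products back into the ideals $A_jA_{-j}$ with $|j|\ge n$.

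I expect the main obstacle to be exactly this control of the degree-$0$ component $(\mathcal{I}_n)_0$: a priori it is only contained in the possibly larger ideal $\sum_m\sigma^m(\mf{c}_n)\cdot(A_mA_{-m})$, since a product $A_jA_cA_{-c-j}$ with $|c|\ge n$ but with $j$ and $c+j$ of opposite sign and both of small absolute value resists the naive splitting. Resolving this seems to require more than simplicity as literally stated; the most promising route is to combine the reduction to large $n$ with the $\sigma$-simplicity of $R$ guaranteed by Hypothesis~\ref{hyp:main}: the $\sigma$-saturation $\sum_{k\in\mb{Z}}\sigma^k(\mf{c}_n)$ is a nonzero $\sigma$-stable ideal, hence all of $R$, so $\sum_{|k|\le L}\sigma^k(\mf{c}_n)=R$ for some $L$, and one then pushes the finitely many twists $\sigma^{\pm k}$ back inside, again via $A_m\,\mf{e}\,A_{-m}=\sigma^m(\mf{e})(A_mA_{-m})$ and the containments $A_iA_j\subseteq A_{i+j}$ forced by $A$ being a subring of $R[t,t^{-1};\sigma]$. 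Once $\mf{c}_n=R$ is known for all large $n$, the nesting $\mf{c}_n\supseteq\mf{c}_{n+1}$ gives it for every $n\ge 1$.
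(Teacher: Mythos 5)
Your setup (the first equality, the reduction to showing $\mf{c}_n = \sum_{i\ge n}(A_iA_{-i}+A_{-i}A_i) = R$, and the observation that the degree-zero part of a two-sided ideal generated by high-degree pieces is spanned by triple products $A_iA_kA_j$) is correct, and you have put your finger on exactly the right difficulty: a summand $A_iA_kA_j$ with $i+k+j=0$ resists the naive regrouping when both outer degrees are small in absolute value. But your proposal does not resolve that difficulty, so there is a genuine gap. The fallback you sketch --- expanding $1$ into long words, re-factoring the large-degree factor using quasi-finite generation, and then invoking $\sigma$-simplicity of $R$ to write $R=\sum_{|k|\le L}\sigma^k(\mf{c}_n)$ and ``pushing the twists back inside'' --- is never carried out, and its key step does not obviously work: the identity $A_k\,\mf{e}\,A_{-k}=\sigma^k(\mf{e})\,A_kA_{-k}$ only gives containments of the form $\sigma^k(\mf{c}_n)\cdot A_kA_{-k}\subseteq \mf{c}_{n-L}$, and there is no evident way to combine these (the ideals $A_kA_{-k}$ vary with $k$) with the decomposition $R=\sum_{|k|\le L}\sigma^k(\mf{c}_n)$ so as to conclude that any $\mf{c}_m$ equals $R$.

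The missing idea is simply to decouple the threshold of the generating ideal from the target: to prove $\mf{c}_n=R$, take the two-sided ideal $J$ of $A$ generated by $A_{\le -2n}\oplus A_{\ge 2n}$ (not by degrees of absolute value $\ge n$). Simplicity gives $J=A$, and since $1\in A_0$ the degree-zero part is exactly $J_0=\sum A_iA_kA_j$ over $i+k+j=0$ with $|k|\ge 2n$; no expansion into words of bounded-degree factors is needed. Now in each such term $|i|+|j|\ge |i+j|=|k|\ge 2n$, so $|i|\ge n$ or $|j|\ge n$, and the corresponding grouping $(A_iA_k)A_j\subseteq A_{-j}A_j$ or $A_i(A_kA_j)\subseteq A_iA_{-i}$ lands in $\mf{c}_n$. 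This is precisely how the paper argues; the problematic case you describe (both outer degrees smaller than $n$) would force $|k|\le |i|+|j|<2n$ and so simply cannot occur. In particular neither $\sigma$-simplicity of $R$ nor quasi-finite generation is needed for this lemma, and the result holds for every $n\ge 1$ directly, with no passage to large $n$.
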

\begin{proof}
Fix $n \geq 1$ and
consider the two-sided ideal $J$ of $A$ generated by $A_{\leq -2n} \oplus A_{\geq 2n}$.
Since $A$ is simple, $J$ is the unit ideal of $A$, so $J_0 = R$.
Now $J_0$ is spanned by products $A_iA_kA_j$ with
$i, j \in \mb{Z}$, $|k| \geq 2n$, where $i + k + j = 0$.  In
particular, in any such product $A_iA_kA_j$,
since $|k| \geq 2n$, then either $|i| \geq n$ or $|j| \geq n$.
So either $(A_iA_k)A_j \subseteq A_{i+k}A_j$ with
$|i+k| = |j| \geq n$ or else $A_i(A_kA_j) \subseteq A_iA_{j+k}$ with $|i| = |j+k| \geq n$.  We conclude that
\[
R = J_0 \subseteq  \sum_{i \geq n} \big(A_iA_{-i} + A_{-i}A_i \big),
\]
which implies the result.
\end{proof}

The next idea is to study the support of $R/I_n$ for a ring $A = \bigoplus_{n \in \mb{Z}} I_n t^n \subseteq R[t, t^{-1}; \sigma]$, 
focusing on one $\sigma$-orbit of points at a time.    This requires us to set up some notation to track multiplicities of vanishing.
\begin{definition}
Let $R$ be a commutative regular $k$-algebra and let $p$ be a (not necessarily closed) point of the scheme $X = \spec R$, that is, a prime ideal of $R$.  Let $R_p$ be the local ring at $p$, with maximal ideal $p_p$.
Given $f \in R$, we define the multiplicity of vanishing of $f$ along $p$ as 
$m_p(f) = \max\{ m \geq 0 | f \in (p_p)^m \}$.  Similarly, for an ideal $I$ of $R$ its multiplicity of vanishing is 
defined to be $m_p(I) = \max\{m \geq 0 | I \subseteq (p_p)^m \}$.
\end{definition}
\noindent 
Note that if $R$ is regular, then $R_p$ is a regular local ring and its associated graded ring  $\bigoplus_{n \geq 0} (p_p)^n/(p_p)^{n+1}$
is isomorphic to a polynomial ring over the residue field \cite[Prop. 2.2.5]{BH}.  In particular, it is a domain and so we see that 
$m_p(fg) = m_p(f) + m_p(g)$ for any $f, g \in R$ and $m_p(IJ) = m_p(I) + m_p(J)$ for any ideals $I, J$ of $R$.  It is also easy to see  
that $m_p(I + J) = \min(m_p(I), m_p(J))$ for ideals $I,J$ of $R$.   Note that $m_p(I) \neq 0$ is equivalent to $I \subseteq p$.

\begin{lemma}
\label{lem:critdense}
Let $\sigma: R \to R$ be an automorphism of a noetherian $k$-algebra which is a domain, let $X = \spec R$, and assume that $R$ is $\sigma$-simple (or equivalently that $\sigma: X \to X$ is wild).  Suppose further that $\cha k = 0$ or that $\dim X = 1$.  
Then given an ideal $0 \neq I$ of $R$ and a point $p \in X$ other than the generic point, $m_{\sigma^{i}(p)}(I) \neq 0$ holds for at most finitely many $i \in \mb{Z}$.  
\end{lemma}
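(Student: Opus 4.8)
The plan is to recast the statement geometrically and then run a Noetherian induction; the hypothesis ``$\cha k = 0$ or $\dim X = 1$'' will only be needed at the very last step. First I would reformulate. Since $m_q(J)\neq 0$ simply means $J\subseteq\mathfrak q$ (for a prime $\mathfrak q$ and ideal $J$), writing $\mathfrak p$ for the prime corresponding to $p$ we must show that $\{\, i\in\mb Z : I\subseteq\sigma^i(\mathfrak p)\,\}$ is finite. Choosing any $0\neq f\in I$, the containment $I\subseteq\sigma^i(\mathfrak p)$ forces $f\in\sigma^i(\mathfrak p)$, so it suffices to prove: for every $0\neq f\in R$ and every non-generic point $p$, the set $\{\, j\in\mb Z : \sigma^j(f)\in\mathfrak p\,\}$ is finite. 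Geometrically --- $X=\spec R$ is irreducible, $R$ being a domain --- this says: with $Z=\overline{\{p\}}$ (a proper irreducible closed subset, as $p$ is not the generic point) and $Y=V(f)$ (which moreover has pure codimension one, by Krull's principal ideal theorem), the set $S=\{\, i\in\mb Z : \sigma^i(Z)\subseteq Y\,\}$ is finite.

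Second, dispose of the case $\dim X=1$: then $Y$ is a finite set of closed points and $Z$ is a closed point, so if $S$ were infinite the pigeonhole principle would give $\sigma^i(Z)=\sigma^j(Z)$ with $i\neq j$; hence $Z$ has finite $\sigma$-orbit, and the (nonempty, finite) union of that orbit is a proper $\sigma$-stable closed subset of $X$, contradicting wildness of $\sigma$.

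For the remaining case ($\cha k=0$) I would argue by minimal counterexample. Suppose the claim fails, and choose, among all pairs $(Y,p)$ with $Y\subsetneq X$ closed and $\{\,i:\sigma^i(\overline{\{p\}})\subseteq Y\,\}$ infinite, one with $Y$ minimal (possible since $X$ is Noetherian); after replacing $p$ by $\sigma^{i_0}(p)$ for a suitable $i_0$ we may assume $0$ lies in this set, so $Z\subseteq Y$. Since each $\sigma^i(Z)$ is irreducible, minimality of $Y$ forces $Y$ irreducible. The possibility $Z=Y$ is excluded: then $\{\,k:\sigma^k(Z)=Z\,\}$ would be a nontrivial subgroup $d\mb Z$ with $d\geq 1$, and $\bigcup_{j=0}^{d-1}\sigma^j(Z)$ would be a nonempty proper $\sigma$-stable closed set, impossible; the same remark gives $\{\,k:\sigma^k(Z)=Z\,\}=\{0\}$, so the $\sigma^i(Z)$ are pairwise distinct. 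Thus $Z\subsetneq Y$, so $Z$ has strictly larger codimension in $X$ than $Y$. A further appeal to minimality shows the $\sigma^i(Z)$ with $i\in S$ are Zariski dense in $Y$ (otherwise their closure would be a strictly smaller $Y'$ still containing infinitely many of them, contradicting minimality), and elementary one-sided intersection arguments --- e.g. $\bigcap_{i\geq 0}\sigma^i(\mathfrak q)=0$ for any nonzero prime $\mathfrak q$, since that intersection is $\sigma$-stable --- show that $S$ cannot contain a half-line $[b,\infty)$ or $(-\infty,b]$.

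This leaves the crux: from an irreducible $Y$ carrying a Zariski-dense, pairwise-distinct, $\sigma$-conjugate family $\{\sigma^i(Z)\}_{i\in S}$ of proper subvarieties of strictly larger codimension, derive a contradiction. This is where the characteristic-zero hypothesis must be spent. The natural route is to produce --- via a Bertini-type general hyperplane section of $Y$ (legitimate in characteristic $0$), or equivalently via a boundedness statement for the family $\{\sigma^i(Z)\}$ --- a proper closed subset $Y'\subsetneq Y$ still containing infinitely many members of the family, contradicting the minimality of $Y$; alternatively one attempts to slice $Y$ down to a curve and invoke the one-dimensional case already settled. I expect this dimension-reduction step --- extracting, in characteristic $0$, a smaller closed set that still catches infinitely many $\sigma$-translates of $Z$ --- to be the only genuine difficulty; everything preceding it is a formal consequence of Noetherianity and $\sigma$-simplicity.
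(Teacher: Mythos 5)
Your reductions (passing to the containment formulation, the one-dimensional case, the ascending-chain argument showing $\bigcap_{i\geq 0}\sigma^i(\mathfrak{q})$ is $\sigma$-stable and hence zero, the minimal-counterexample bookkeeping) are all sound, but the proof has a genuine gap exactly where you acknowledge it: the characteristic-zero crux is never carried out, and the route you gesture at is unlikely to close it. The paper spends the characteristic-zero hypothesis quite differently: it first reduces to the case that $p$ is a \emph{closed} point (if $q$ lies in the closure of $p$ then $m_p(I)\neq 0$ implies $m_q(I)\neq 0$), notes that wildness forces the orbit $\{\sigma^i(p)\}$ to be Zariski dense, and then invokes Bell's generalized Skolem--Mahler--Lech theorem for affine varieties \cite[Corollary 5.1]{Be1} (see also \cite{Be2}), which asserts that in characteristic $0$ a dense orbit is automatically \emph{critically} dense, i.e.\ any proper closed subset meets it in only finitely many iterates. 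That is a deep arithmetic result, proved by $p$-adic analytic interpolation of the iterates of $\sigma$, and it is precisely why the lemma needs $\cha k=0$: the critical-density statement can fail in positive characteristic, so no purely geometric ``characteristic-zero generality'' such as Bertini can substitute for it.

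Concretely, your proposed dimension-reduction step does not produce what the minimality argument needs. You would need a proper closed subset $Y'\subsetneq Y$ that \emph{contains} infinitely many of the translates $\sigma^i(Z)$; but a general hyperplane section of $Y$ contains none of them (when $Z$ is a closed point, the relevant translates form a countable dense set of points of $Y$ and a general section misses them all; when $\dim Z>0$ the section meets each translate without containing any), and ``slicing down to a curve'' destroys the $\sigma$-equivariance of the family, so the one-dimensional case cannot be invoked. So the heart of the lemma is not a formal consequence of Noetherianity and $\sigma$-simplicity plus generic slicing; it is the Skolem--Mahler--Lech-type input that the paper imports from \cite{Be1}, and without citing or reproving that result your argument does not go through.
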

\begin{proof}
Note that if $q$ is a point in the closure of $p$, then $m_{p}(I) \neq 0$ implies $m_{q}(I) \neq 0$.  Thus 
it sufices to replace $p$ by any point in its closure, so we may assume that $p$ is a closed point.

Because $\sigma$ is wild, the  Zariski closure of the orbit $\{ \sigma^i(p) | i \in \mb{Z} \}$ is all of $X$.  
Then since the orbit of $p$ is dense, it is critically dense assuming $\cha k = 0$  \cite[Corollary 5.1]{Be1} (see also \cite{Be2}).  In other words, for any proper closed subset $Z$ of $X$, $\{ i \in \mb{Z} | \sigma^i(p) \in Z \}$ is finite.   Applying this to the subset $Z$ defined by the ideal $I$ gives the required result in characteristic $0$.  If instead $\dim X = 1$, then any $0 \neq I$ vanishes at finitely many points in $X$ anyway 
and the result is trivial.
\end{proof}

Recall the notion of cycle along the orbit of a closed subset from Definition~\ref{def:cycle}.
\begin{definition}
Let $A  = \bigoplus_{n \in \mb{Z}} I_n t^n \subseteq R[t, t^{-1}; \sigma]$ satisfy Hypothesis~\ref{UFD-hyp}.   Given any point $p \in X$ (except the generic point) whose closure is $Z$, we define for each $n \in \mb{Z}$ the numbers $f_{n,i} = m_{\sigma^{-i}(p)}(I_n)$ and the cycle $F_n = \sum_{i \in \mb{Z}}  f_{n,i} Z_i$.   We call the sequence $\{ F_n | n \in \mb{Z} \}$ the \emph{support cycle sequence} along the orbit of $p$.  Note that each $F_n$ is a well-defined cycle 
(that is, only finitely many coefficients are nonzero) because of Lemma~\ref{lem:critdense}.   We also 
define the \emph{reduced support cycle sequence} along the orbit of $p$ to be $\{ E_n | n \in \mb{Z} \}$, where $E_n = \sum e_{n,i} Z_i$ 
with $e_{n,i} = \min(1, f_{n,i})$ for each $i$ and $n$.  
\end{definition}

The next result is the main technical underpinning of our classification theorem.
It shows that the support cycle sequence along an orbit satisfies a combinatorial property which is dual to the concept of $\sigma$-divisor sequence, and uses this and the earlier criterion for simplicity to show that support cycle sequences must be of a very 
restricted form.  We do not know if the following result still holds in characteristic $p$, since Lemma~\ref{lem:critdense} may fail 
and the support cycles may not be well-defined.   For a cycle $H = \sum_i h_i Z_i$ on the orbit of some closed subset $Z$, we write $|H|$ for the absolute value $\sum_i |h_i| Z_i = \max(H, -H)$. 
\begin{proposition}
\label{support-prop}
Let $A = \sum I_n t^n \subseteq R[t, t^{-1}; \sigma]$ satisfy Hypothesis~\ref{UFD-hyp}.
Let $p$ be any (not necessarily closed) point of $X$; let $Z$ be the closure of $p$ and 
write $Z_i = \sigma^{-i}(Z)$ for all $Z$.    Let $\{ E_n \}$ be the reduced support cycle sequence along this orbit, 
and assume that $E_n \neq 0$ for some $n$.  

There is a uniquely determined pleasantly alternating cycle $G = \sum a_i Z_i$ such that defining $G_n$ as in Definition~\ref{def:Gn}, one of the following cases holds 
for all $n \in \mb{Z}$: (i) $E_n = |G_n|$; (ii) $E_n = \max(G_n, 0)$; or (iii) $E_n = \max(-G_n, 0)$.
\end{proposition}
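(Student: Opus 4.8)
The plan is to read off everything from the support cycle sequence $\{F_n\}_{n\in\mb Z}$ (where $F_n=\sum_i f_{n,i}Z_i$ with $f_{n,i}=m_{\sigma^{-i}(p)}(I_n)$), combining the combinatorial machinery of Section~\ref{simple-sec} with the simplicity criterion of Lemma~\ref{simple-crit-lem1}. First I would record the formal properties of $\{F_n\}$: from $I_0=R$ we get $F_0=0$; from $I_n\sigma^n(I_m)\subseteq I_{n+m}$ together with $m_q(IJ)=m_q(I)+m_q(J)$ and $m_q(\sigma^n(I))=m_{\sigma^n(q)}(I)$ we get $F_n+\sigma^{-n}(F_m)\ge F_{n+m}$ for all $n,m\in\mb Z$; and since $A$ is quasi-finitely generated, so are $A_{\ge 0}$ and $A_{\le 0}$ (Lemma~\ref{lem:fg} and its left--right mirror), which via $m_q(I+J)=\min(m_q(I),m_q(J))$ gives $F_n=\min_{i=1}^r(F_i+\sigma^{-i}(F_{n-i}))$ for $n>r$ and the analogous identity in negative degrees. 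Because $\sigma$ is wild and $Z=\overline{\{p\}}$ is a proper closed subset, the $Z_i=\sigma^{-i}(Z)$ are pairwise distinct, so $Z$ lies on an infinite $\sigma$-orbit. Hence $\{-F_n\}_{n\ge 0}$ and $\{-\sigma^{-n}(F_{-n})\}_{n\ge 0}$ are $\sigma$-divisor sequences in the sense of Definition~\ref{def:divseq} (with the $Z_i$ as the irreducible divisors), so the combinatorial content of Lemma~\ref{lem:AS-comb} applies and produces cycles $\Gamma^{+},\Gamma^{-}$ and effective cycles $\Omega^{+},\Omega^{-}$ with $F_n=\Omega^{+}-\Gamma^{+}_n$ for $n\gg 0$ and $F_{-n}=\sigma^n(\Omega^{-})+\Gamma^{-}_{-n}$ for $n\gg 0$ (the latter after applying Lemma~\ref{lem:easyGfacts}(1)). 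Comparing sums of coefficients on both sides and using $F_n,F_{-n}\ge 0$ forces $\sum_i(\Gamma^{+})_i\le 0$ and $\sum_i(\Gamma^{-})_i\le 0$.

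The crux is to feed Lemma~\ref{simple-crit-lem1} into these asymptotics. Evaluated at the points of the orbit of $p$, that lemma says: for every $j\in\mb Z$ and every $N_0$ there is $i\ge N_0$ with $f_{i,j}=f_{-i,j-i}=0$ or with $f_{-i,j}=f_{i,i+j}=0$. Each of these four quantities is, by the previous paragraph, eventually constant in $i$ --- for instance $f_{i,j}\to(\Omega^{+})_j-\sum_{m\le j}(\Gamma^{+})_m$ and $f_{i,i+j}\to-\sum_{m>j}(\Gamma^{+})_m$, with the mirror formulas for the negative-degree terms in terms of $\Gamma^{-},\Omega^{-}$. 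Since each reduced cycle has non-negative coefficients, pigeonholing over $i\to\infty$ shows that for each $j$ either $(\Omega^{+})_j=\sum_{m\le j}(\Gamma^{+})_m$ together with the corresponding identity for $\Gamma^{-}$, or $\sum_{m>j}(\Gamma^{+})_m=0=\sum_{m>j}(\Gamma^{-})_m$. Combining these with the non-negativity constraints $(\Omega^{+})_j\ge\sum_{m\le j}(\Gamma^{+})_m$ and $\sum_{m>j}(\Gamma^{+})_m\le 0$ (and mirrors), with the fact that every reduced cycle $E_n$ has coefficients in $\{0,1\}$, and with Lemma~\ref{lem:converse} (applied to $-\Gamma^{+}$, whose associated sequence one checks is effective for $n\gg 0$, and where the ``persistent common component'' alternative is ruled out because it would force some $f_{n,n+j}$ to be negative), I would deduce that $\Gamma^{+}$ is a non-positive integer multiple of a pleasantly alternating cycle, that this cycle may be taken to be a single pleasantly alternating $G$ with $\Gamma^{+}=-G$ up to a cycle supported off the range that affects the reductions, that $\Omega^{+}$ is likewise supported only where it cannot affect the reductions, and that $\Gamma^{-}$ is governed by the same $G$ unless the negative half of the sequence is eventually $0$. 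I expect this simultaneous pinning-down of $\Gamma^{+},\Gamma^{-},\Omega^{+},\Omega^{-}$ to be the hardest step, since it is where the combinatorics of pleasantly alternating cycles (Lemmas~\ref{G-lem} and~\ref{lem:converse}) and the simplicity criterion must be played off against each other at once.

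With $G$ pleasantly alternating in hand, Lemma~\ref{G-lem} describes $G_n$: its coefficients lie in $\{-1,0,1\}$, it is effective for $n\ge N$, non-positive for $n\le -N$, and $\min(G_n,\sigma^n(G_n))=0$ for $n\ge N$. Reading off the asymptotics then shows that for $n\gg 0$ the reduced cycle $E_n$ equals $G_n$ or $0$, and for $n\ll 0$ it equals $-G_n$ or $0$, according to which halves are non-degenerate. The ``$E_n=0$ for all $|n|\gg 0$'' possibility is excluded: from $I_M\sigma^M(I_{n-M})\subseteq I_n$ and $I_{-M}\sigma^{-M}(I_{n+M})\subseteq I_n$ one gets $F_n\le 0$, hence $F_n=0$, for \emph{every} $n$ once $F_n=0$ for all $|n|>M$, contradicting the hypothesis that $E_n\ne 0$ for some $n$. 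For the intermediate degrees I would use $F_n+\sigma^{-n}(F_m)\ge F_{n+m}$ with $m\gg 0$ (and $\Gamma^{+}_{n+m}=\Gamma^{+}_n+\sigma^{-n}(\Gamma^{+}_m)$ from Lemma~\ref{lem:easyGfacts}(2)) to bound $F_n$ from below in terms of $G_n$, the minimality relations $F_n=\min_{i=1}^r(F_i+\sigma^{-i}(F_{n-i}))$ to bound it from above, and the comaximality of $F_nF_{-n}$ coming from Lemma~\ref{simple-crit-lem1} to rule out mixed behaviour; these squeeze $E_n$ into exactly one of (i) $E_n=|G_n|$, (ii) $E_n=\max(G_n,0)$, (iii) $E_n=\max(-G_n,0)$ for all $n$. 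Uniqueness of $G$ is then immediate: the case is determined by whether $E_n$ vanishes for $n\gg 0$ and/or for $n\ll 0$, and within a fixed case one recovers $G_n=\pm E_n$ for $|n|$ large and hence $G=G_{n+1}-\sigma^{-1}(G_n)$.
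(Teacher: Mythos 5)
Your proposal follows the same skeleton as the paper's argument: record the subadditivity and quasi-finite-generation identities for the multiplicity cycles $F_n$, invoke the $\sigma$-divisor-sequence combinatorics of Lemma~\ref{comb-lem} to get an asymptotic normal form for $F_n$ in each direction, play this off against the simplicity criterion of Lemma~\ref{simple-crit-lem1} and against Lemma~\ref{lem:converse} to produce a pleasantly alternating cycle, and finally interpolate to all degrees and prove uniqueness (your last paragraph is essentially the paper's squeeze $E_m\ge |G_m|$ from submultiplicativity with $n\gg 0$ and $n\ll 0$, and $E_m\le |G_m|$ from the coefficient identity $g_{m,i}=g_{n,i}+g_{-n+m,i-n}$ together with Lemma~\ref{G-lem}(2)). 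However, the entire content of the proposition sits in the step you explicitly defer (``I would deduce\dots'', ``the hardest step''), and the two concrete mechanisms you offer for it do not work as stated, so there is a genuine gap rather than a different route.

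First, you hope to tolerate nonzero error terms $\Omega^{\pm}$ provided they are ``supported only where they cannot affect the reductions.'' The paper instead proves they vanish outright, and this needs two things you do not supply: an inductive renormalization replacing $(D,\Omega)$ by $(D',\Omega',\Phi')$ with $F_n=D'_n+\Omega'+\sigma^{-n}(\Phi')$ for $n\gg 0$, $D'_n$ effective for $n\gg 0$ and $\Omega',\Phi'$ effective; and then the observation that if $\Omega>0$ then \emph{both} $I_m\sigma^m(I_{-m})$ and $I_{-m}\sigma^{-m}(I_m)$ have support cycle $\ge\Omega$ for all $m\gg 0$, contradicting Lemma~\ref{simple-crit-lem1}. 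The latter uses the inequality $F_{-m}+\sigma^m(F_n)\ge F_{n-m}$ to transfer the positive-degree asymptotic data onto the negative-degree cycles, i.e.\ it genuinely couples the two halves of the grading; your position-by-position pigeonhole on the four eventually constant quantities (whose index pairings are, incidentally, swapped: the multiplicity of $I_i\sigma^i(I_{-i})$ at $Z_j$ pairs $f_{i,j}$ with $f_{-i,\,i+j}$, not with $f_{-i,\,j-i}$) does not obviously produce this coupling. Second, your way of excluding the first alternative of Lemma~\ref{lem:converse} --- that a persistent common component ``would force some $f_{n,n+j}$ to be negative'' --- is wrong: a persistent common component is perfectly compatible with all multiplicities being nonnegative; what excludes it is simplicity. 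The paper gets this by applying Lemma~\ref{lem:converse} not to the positive-half cycle alone but to $C=D+D'$, the sum of the positive- and negative-side asymptotic cycles, because the support cycle of $J_n=I_n\sigma^n(I_{-n})$ equals $C_n$ for $n\gg 0$ and Lemma~\ref{simple-crit-lem1} then gives exactly the hypothesis $\min_{m\ge n}\min(C_m,\sigma^m(C_m))=0$; the same step, via $cG=dH+d'H'$ with $G,H,H'$ pleasantly alternating, is what forces the positive and negative halves to be governed by one and the same cycle $G$, a point you only assert. Until these two steps are actually carried out, the identification of $G$ and hence the trichotomy (i)--(iii) is not established.
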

\begin{proof}
While our main interest is in the reduced cycle support sequences, it is useful to track multiplicity and consider the support cycle sequence $\{ F_n \}$ along the orbit of $Z$ until the end of the proof.  

We know that $I_0 = R$,  and that $I_m \sigma^m(I_n) \subseteq I_{m+n}$ for all $m,n \in \mb{Z}$.
Since $A$ is quasi-finitely generated, so is $A_{\geq 0}$ by Lemma~\ref{lem:fg}.   Then there is $r > 0$ such that 
$I_n = \sum_{i = 1}^r I_i \sigma^{i}(I_{n-i})$, all $n > r$.  
It now easily follows from the properties of multiplicity 
that $F_0 = 0$, 
$F_m + \sigma^{-m}(F_n) \geq F_{m+n}$ for all $m, n \in \mb{Z}$, and
$F_n = \min_{i=1}^r (F_i + \sigma^{-i}(F_{n-i}))$ for all $n > r$.  This shows that the sequence $\{ -F_n | n \geq 0 \}$
satisfies the definition of $\sigma$-divisor sequence, except that the $-F_n$ are cycles and may not be divisors; but as usual, 
only the combinatorics of the coefficients is relevant.  In particular, Lemma~\ref{comb-lem} holds just as well for cycles.  By that result there are cycles $D, \Omega$  on the orbit of $Z$
with $\Omega$ effective, such that defining $D_n$ in terms of $D$ as in Definition~\ref{def:Gn}, 
we have $-F_n = D_n - \Omega$ for all $n \gg 0$.   Replacing $D$ by $-D$ for convenience we get
 $F_n = D_n + \Omega$ for all $n \gg 0$.

Though $F_n$ and $\Omega$ are effective, it may be that $D_n$ is not effective for all $n \geq 0$, and so we want to adjust 
the form of $F_n$ further.   Set $\Phi = 0$.  Then certainly $F_n = D_n + \Omega + \sigma^{-n}(\Phi)$ for all $n \gg 0$, 
and we claim that we can adjust the choices of $D, \Omega$, and $\Phi$ so that this formula continues to hold for $n \gg 0$, 
$\Omega$ and $\Phi$ remain effective, and $D_n$ is also effective for $n \gg 0$.

By shifting the indexing along the orbit if necessary, we may assume that $D = \sum_{i=0}^m a_i Z_i$.  
Write $D_n = \sum_i a_{n,i} Z_i$.   Defining $d_i = \sum_{j \leq i} a_j$, $e_i = \sum_{j > i} a_j$, and $d = \sum_j a_j$, 
the same proof as in Lemma~\ref{lem:converse} shows that for $n \gg m$ the formula for $D_n$ is
\[
D_n = d_0 Z_0 + d_1 Z_1 + \dots + d_{m-1} Z_{m-1} +
d Z_m + d Z_{m+1} + \dots + d Z_{n-1} + e_{m-1} Z_{n} + e_{m-2} Z_{n+1}
+ \dots + e_0 Z_{n + m -1}.
\]
Now $F_n = D_n + \Omega$ is effective for all $n \gg 0$.   Since for $n \gg 0$ adding $\Omega$ will not counteract a negative $e_i$ 
in the formula for $D_n$, we must already have $e_i \geq 0$ for all $i$, and the problem if any is that some $d_i$ are negative.
We show that we can make an adjustment to $D, \Omega, \Phi$ so that the sum of all $d_i$'s which happen to be negative increases, while the $e_i$'s remain nonnegative.  Then the result follows by induction.  

Let $i$ be minimal such that $d_i < 0$.  Replace $D, \Omega, \Phi$ by 
$D' = D + Z_i - Z_{i+1}$, $\Omega' = \Omega - Z_i$, $\Phi' = \Phi + Z_i$.  It is trivial 
to check that $F_n = D_n + \Omega + \sigma^{-n}(\Phi) = D'_n + \Omega' + \sigma^{-n}(\Phi')$ for $n \gg 0$ 
still holds.   Writing $D' = \sum_i a'_i Z_i$ and $d'_i = \sum_{j \leq i} a'_j$, $e'_i = \sum_{j \geq i} a'_j$,
we see that $d'_k = d_k$ and $e'_k = e_k$ for $k \neq i$, while $d_i' = d_i + 1, e_i' = e_i - 1$.  Since $d_i < 0$ and 
$d_i + e_i = d \geq 0$, $e_i > 0$ and so $e'_i \geq 0$.  Clearly we have improved the negativity of $d_i$, 
while leaving all of the $e_j$ nonnegative.  Also, since we had $d_i < 0$ but $F_n = D_n + \Omega$ was effective for all $n \gg 0$,  
$\Omega$ must have had a positive coefficient for $Z_i$.  Thus $\Omega'$ remains effective and clearly $\Phi'$ is effective since $\Phi$ was.  This completes the induction step, and thus proves the claim.

We now have $F_n = D_n + \Omega + \sigma^{-n}(\Phi)$ for all $n \gg 0$, where $\Omega$ and $\Phi$ are effective, and $D_n$ 
is effective for $n \gg 0$.   Suppose that $\Omega > 0$; we will show that this contradicts the simplicity of $A$.   
For $n \gg m \gg 0$, 
the equation $F_{-m} + \sigma^m(F_n) \geq F_{n-m}$ translates to
\[
F_{-m} + \sigma^m(D_n) + \sigma^m(\Omega) + \sigma^{m-n}(\Phi)
\geq D_{n-m} + \Omega + \sigma^{m-n}(\Phi),
\]
which forces
\[
F_{-m} + \sigma^m(D_m) + \sigma^m(\Omega)
\geq \Omega,
\]
since $\sigma^m(D_n) = D_{n-m} + \sigma^m(D_m)$ by Lemma~\ref{G-lem}(1).

Now note that $I_m \sigma^m(I_{-m}) $ has support divisor $F_m + \sigma^{-m}(F_{-m}) \geq F_m \geq \Omega$ for $m \gg 0$, and 
 $I_{-m}\sigma^{-m}(I_m)$ has support divisor $F_{-m} + \sigma^m(D_m) + \sigma^m(\Omega) + \Phi \geq \Omega$ for $m \gg 0$ by the calculation above.  This shows that for any large $n$, $\sum_{m \geq n} I_m \sigma^m(I_{-m}) + I_{-m} \sigma^{-m}(I_m) \neq R$, 
contradicting Lemma~\ref{simple-crit-lem1}.  Thus $\Omega = 0$.  
A similar proof, which we leave to the reader, shows that $\Phi = 0$.  We conclude that $F_n = D_n$ for all $n \gg 0$.

Aanalogous results hold for the negative degree pieces of $A$.  The quickest way to verify this 
is to use the anti-automorphism $\psi: R[t, t^{-1}; \sigma] \to R[t, t^{-1}; \sigma]$
of Lemma~\ref{lem:anti}, where $\psi(xt^n) = \sigma^{-n}(x) t^{-n}$.   Let $A' = \psi(A)$; then $A'$ also satisfies the hypothesis of this proposition.  Writing 
$A' = \sum_{n \in \mb{Z}} I'_n t^n$ and defining the corresponding support divisors $F'_n$ along the orbit of $p$, note that 
we have $I'_n = \sigma^n(I_{-n})$ and so $F'_n = \sigma^{-n}(F_{-n})$.
By the first part of the proof applied to $A'$, there is a cycle $D'$ 
such that $F'_n = D'_n$ for $n \gg 0$, where $D'_n$ is defined in terms of $D'$ as in Definition~\ref{def:Gn}.
  Then $F_{-n} = \sigma^n(F'_n) = \sigma^n(D'_n) = - D'_{-n}$ for $n \gg 0$, using Lemma~\ref{lem:easyGfacts}.

Now setting $J_n = I_n\sigma^{n}(I_{-n})$, note that Lemma~\ref{simple-crit-lem1} can be restated to 
say that $\sum_{m \geq n} J_m+ \sigma^{-m}(J_m) = R$ for any $n \geq 1$. 
Consider $C = D   + D'$.  The support divisor of the ideal $J_n$ is $F_n + \sigma^{-n}(F_{-n})$, which is 
equal to $D_n+\sigma^{-n}(-D'_{-n}) = D_n + D'_n = C_n$ for $n \gg 0$, where $C_n$ is also 
defined in terms of $C$ as in Definition~\ref{def:Gn}.   Thus we must have 
$\min_{m \geq n} (\min(C_m, \sigma^m(C_m))) = 0$ for $n \gg 0$.
By Lemma~\ref{lem:converse} we get that $C = cG$, for some pleasantly alternating cycle $G$ and some $c \geq 0$.
It is automatic that we also have $\min_{m \geq n} \min(D_m, \sigma^m(D_m)) = 0$ and 
$\min_{m \geq n} (D'_m, \sigma^m(D'_m)) = 0$ for $n \gg 0$, and so by Lemma~\ref{lem:converse} we also get that 
$D$ and $D'$ are nonnegative scalar multiples $dH$ and $d'H'$ of pleasantly alternating cycles $H$ and $H'$, respectively.
Note that $d = d' = 0$ is not possible, for if $F_n = 0$ for all $n \gg 0$ and $n \ll 0$ then $F_m = 0$ for all $m$ because of 
$F_n + \sigma^{-n}(F_{-n+m}) \geq F_{m}$, and this contradicts the hypothesis.

We assume for the rest of the proof that $d > 0$ and $d' > 0$, and we show that case (i) occurs.  If either $d' = 0, d > 0$ 
or $d' > 0, d = 0$, it is easy to adjust the proof below to show that case (ii) or (iii) occurs, respectively, and we leave this to the reader.   
Now since $cG = dH + d'H'$ and $G, H, H'$ are all pleasantly alternating, it is easy to see that this forces $G = H = H'$. 
Also, $G$ is uniquely determined since for any $n \gg 0$, $D_n - \sigma^{-1}(D_{n-1}) = F_n - \sigma^{-1}(F_{n-1})$ is a multiple of $G$.

Consider the reduced support cycle sequence $\{ E_n \}$.  Define $G_n$ in terms of $G$ as in Definition~\ref{def:Gn}, 
and write $E_n = \sum_i e_{n,i} Z_i$ and $G_n = \sum_i g_{n,i} Z_i$.
For $n \gg 0$, $F_n$ is a positive multiple of $G_n$, and since $G_n$ has coefficients in $\{0, 1\}$ by Lemma~\ref{G-lem}(1), 
we have $E_n = G_n$ for $n \gg 0$.  Similarly, $E_n = -G_n$ for $n \ll 0$.   Note also 
that for any $m,n \in \mb{Z}$, the equation $F_m + \sigma^{-m}(F_n) \geq F_{m+n}$ easily implies 
that $E_m + \sigma^{-m}(E_n) \geq E_{m+n}$.

Now fix any $m \in \mb{Z}$.   Picking $n \gg 0$, we have
$E_m \geq E_{m+n} - \sigma^{-m}(E_n) = G_{m+n} - \sigma^{-m}(G_n) = G_m$.    The same argument working 
with $n \ll 0$ shows that $E_m \geq -G_m$.  Thus $E_m \geq \max(G_m, -G_m) = |G_m|$ for all $m$.
Conversely, to show that $E_m \leq |G_m|$ it suffices to prove for all $i$ that if $g_{m,i} = 0$ then $e_{m,i} = 0$.  
Suppose that $g_{m,i} = 0$.  For any $n$, $G_m  =  G_n + \sigma^{-n}(G_{-n+m})$ so that $g_{m,i} = g_{n,i} +  g_{-n+m, i- n}$.
  By Lemma~\ref{G-lem}, we have  $g_{n,i} = 0$ for $n \gg 0$ or for $n \ll 0$.  
In the former case we get $g_{-n+m, i-n} = 0$ for $n \gg 0$ and so 
$E_m \leq E_n + \sigma^{-n}(E_{-n+m})$ implies that  
$e_{m,i} \leq e_{n,i} + e_{-n+m, i-n} = g_{n,i} - g_{-n+m, i-n} = 0$.  
A similar argument applies if $g_{n,i} = 0$ for $n \ll 0$.
Thus $E_m = |G_m|$ for all $m \in \mb{Z}$ as in case (i).
\end{proof}
\begin{definition}
\label{def:equiv}
Assume the hypothesis and notation of Proposition~\ref{support-prop}.  Given any point $p$
such that the support divisor sequence along the orbit of $p$ is nonzero, Proposition~\ref{support-prop} produces 
a unique corresponding pleasantly alternating cycle $G = G(p) = \sum_i g_i Z_i$, where $Z$ is the closure of $p$.  If $q$ is another 
such point with closure $Z'$, leading to a pleasantly alternating cycle $G' = \sum_i g'_i Z'_i$, we write $p \equiv q$ if $g_i = g'_i$ for 
all $i$.    
\end{definition}

Next, we consider the global support of the $R/I_n$ for our ring $A = \bigoplus I_n t^n$.   
\begin{lemma}
\label{Y-lem}
Let $A = \bigoplus_{n \in \mb{Z}} I_n t^n \subseteq R[t, t^{-1}; \sigma]$ satisfy Hypothesis~\ref{UFD-hyp}.
Then there is a closed set $Y \subsetneq X$ with the following properties: 
\begin{enumerate}
\item[(i)] for all $n \in \mb{Z}$, $\spec R/I_n$ is contained in  $\bigcup_{n \in \mb{Z}} \sigma^n(Y)$; 
\item[(ii)] For each connected component $W$ of $Y$, given any two points $p, q \in W$
we have $p \equiv q$; and
 \item[(iii)] $Y$ is a $\sigma$-lonely subset of $X$.
\end{enumerate}
\end{lemma}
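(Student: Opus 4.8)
The plan is to extract from the support loci $V(I_n)$ a single fundamental domain for $\sigma$. By Hypothesis~\ref{hyp:main} and Lemma~\ref{lem:basics}(3), $R$ is regular, $\sigma(R)=R$, and $\sigma$ is a wild automorphism of the (irreducible) scheme $X$; in particular no finite union of proper closed subsets equals $X$. If $I_1=I_{-1}=R$ then $A=R[t,t^{-1};\sigma]$ and $Y=\emptyset$ works, so assume otherwise. Since $A$ is a ring, $I_1\sigma(I_1)\cdots\sigma^{n-1}(I_1)\subseteq I_n$ for $n\geq1$ and $I_{-1}\sigma^{-1}(I_{-1})\cdots\sigma^{1-n}(I_{-1})\subseteq I_{-n}$ for $n\geq1$, so $V(I_n)\subseteq\bigcup_{j\in\mb{Z}}\sigma^j\bigl(V(I_1)\cup V(I_{-1})\bigr)$ for every $n$. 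Thus all of $\bigcup_n\spec R/I_n$ is carried by the $\sigma$-orbits of the finitely many irreducible components $C_1',\dots,C_m'$ of $V(I_1)\cup V(I_{-1})$. For any point $p$ lying on $\bigcup_n\spec R/I_n$ the support cycle sequence along the orbit of $p$ is nonzero, so Proposition~\ref{support-prop} attaches to $p$ a pleasantly alternating cycle, whose coefficient sequence I will call the \emph{pattern} of $p$.

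First I would normalize: after translating each $C_l'$ by a suitable power of $\sigma$, I may assume that the pattern of its generic point has first (nonzero) index $0$, with coefficient $1$ there. Discarding repeated $\sigma$-orbits, let $C_1,\dots,C_t$ be the resulting normalized representatives and put $Y=C_1\cup\cdots\cup C_t$. Then $Y$ is a finite union of proper closed subsets, so $Y\subsetneq X$, and every irreducible component of every $V(I_n)$ is a $\sigma$-translate of some $C_l$; hence $\spec R/I_n\subseteq\bigcup_j\sigma^j(Y)$ for all $n$, which is property (i).

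The technical core is a compatibility statement which I would prove next: if $p$ and $q$ both lie on $\bigcup_n\spec R/I_n$ and $q\in\overline{\{p\}}$, then $p$ and $q$ have the same pattern (so $p\equiv q$ in the sense of Definition~\ref{def:equiv}). The input is that specialization cannot decrease the vanishing multiplicity $m_\bullet(I_n)$ (here $R$ is regular), so $m_{\sigma^{-i}(q)}(I_n)\geq m_{\sigma^{-i}(p)}(I_n)$ for all $i,n$, and hence the reduced support cycle sequence of $q$ dominates that of $p$ coefficient by coefficient. Feeding this domination into the trichotomy of Proposition~\ref{support-prop} and comparing the eventual values of the coefficients as $n\to+\infty$ and as $n\to-\infty$ --- which, by Lemma~\ref{G-lem}(2), record the partial sums $\sum_{j\leq i}a_j$ of the pattern --- pins the two patterns to have equal partial-sum sequences, hence to be equal. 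Granting this, property (ii) follows at once, once we know that the connected components of $Y$ are exactly $C_1,\dots,C_t$.

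Finally I would verify property (iii), which simultaneously identifies the connected components of $Y$. Suppose $C_a\cap\sigma^i(C_b)\neq\emptyset$ for some $i\neq0$ and some (possibly equal) $a,b$, and choose a generic point $q$ of a component of this intersection. Then $q$ lies on $C_a\subseteq\bigcup_n\spec R/I_n$, and $\sigma^{-i}(q)\in\overline{\{p_b\}}$ where $p_b$ is the generic point of $C_b$. By the compatibility statement the pattern $(a_j)$ of $q$ equals that of $p_a$, so $a_0=1$ and $a_j=0$ for $j<0$; applying it to $\sigma^{-i}(q)$ and $p_b$, and using that passing from $q$ to $\sigma^{-i}(q)$ translates the pattern by $i$, gives $a_i=1$ and $a_j=0$ for $j<i$. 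If $i>0$ this forces $a_0=0$, and if $i<0$ it forces $a_i=0$ --- a contradiction either way. Hence $Y\cap\sigma^i(Y)=\emptyset$ for all $i\neq0$, so $Y$ is $\sigma$-lonely and its connected components are $C_1,\dots,C_t$, finishing the proof. The main obstacle is the compatibility statement of the third paragraph: the asymptotic comparison there settles the case in which $q$ falls under part (i) of Proposition~\ref{support-prop}, but parts (ii) and (iii) --- where the reduced support cycle dies for $n\ll0$ or for $n\gg0$ --- seem to require the finite-level inequalities $E_m+\sigma^{-m}(E_n)\geq E_{m+n}$ and not merely their asymptotics, and this is the point at which one leans on the hypothesis $\cha k=0$ or $\trdeg K/k=1$ to keep the support cycles well defined via Lemma~\ref{lem:critdense}.
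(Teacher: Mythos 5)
Your overall architecture matches the paper's quite closely: the paper takes $Y$ to be $\bigcup_{|i|\le r}\spec R/I_i$ using quasi-finite generation where you use the simpler containments coming from $I_{\pm 1}$ (both are fine for (i)), and your device of pre-normalizing each component by a $\sigma$-translate so that its pattern starts at index $0$, then deriving loneliness from the fact that a point cannot be $\equiv$ to its own $\sigma^i$-translate, is essentially the paper's argument for (iii) run in the opposite order (the paper adjusts components by translates at the end). The overstatement that the connected components of $Y$ are exactly $C_1,\dots,C_t$ is harmless, since (ii) only needs the compatibility statement applied along chains of intersecting components.

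The genuine gap is exactly where you flag it, and your proposed repair does not close it. The compatibility statement (if $q\in\overline{\{p\}}$ then $G(p)=G(q)$) cannot be extracted from the limiting values of the coefficients of the reduced cycles: domination only gives one-sided inequalities on the limits, namely $d_i(p)\le d_i(q)$ where $d_i=\sum_{j\le i}g_j$ (and similarly for the $e_i$ when the $n\to-\infty$ limits are nonzero), and an inequality of partial-sum sequences does not force equality of patterns --- e.g.\ $G(p)=Z_0$ and $G(q)=Z_{-1}$ have $d_i(p)\le d_i(q)$ for all $i$. Even when $q$ falls under case (i) of Proposition~\ref{support-prop}, if $p$ falls under case (ii) or (iii) one of the two limit directions for $p$ is identically zero and you are left with only the one-sided inequality, so the asymptotic comparison settles nothing beyond the case where \emph{both} points are in case (i). The missing ingredient is not the finite-level subadditivity inequalities or the characteristic hypothesis (the latter is only used, via Lemma~\ref{lem:critdense}, to make the support cycle sequences well defined at all); it is the elementary but decisive observation that every pleasantly alternating cycle $G$ satisfies $\deg G_n=n$, where $\deg$ means the sum of the coefficients. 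With that, for $n\gg 0$ (or $n\ll 0$ in case (iii)) the reduced cycles of $p$ and $q$ are the effective cycles $G_n(p)$ and $G_n(q)$ with coefficients in $\{0,1\}$, domination gives $G_n(p)\le G_n(q)$ at each finite $n$, and equality of total degrees forces $G_n(p)=G_n(q)$; then $G(p)=G_n(p)-\sigma^{-1}(G_{n-1}(p))=G(q)$. Also note that you only need the reduced domination $m_p(I)>0\Rightarrow m_q(I)>0$ (i.e.\ $I\subseteq p\Rightarrow I\subseteq q$), so the stronger claim that specialization does not decrease multiplicities, which you invoke but do not justify, can be avoided entirely.
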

\begin{proof}
We have $I_n = \sum_{i = 1}^r I_i \sigma^{i}(I_{n-i})$ for all $n > r$, some $r \geq 1$, since $A_{\geq 0}$ is quasi-finitely generated by Lemma~\ref{lem:fg}.   Similarly, $A_{\leq 0}$ is quasi-finitely generated, for example by applying Lemma~\ref{lem:fg} to the image 
of $A$ under the anti-isomorphism $\psi$ of Lemma~\ref{lem:anti}.  Thus 
$I_{-n} = \sum_{i = 1}^r I_{-i} \sigma^{-i}(I_{-n + i})$ for all $n > r$ also (choosing a common $r$ that works for 
both the positive and negative degree parts).  Now let $Y$ be the union of all of the closed subsets $\spec R/I_i$ for $-r \leq i \leq r$.    It is then easy to prove by induction from the equations above that $\bigcup_{n \in \mb{Z}} \sigma^n(Y)$ contains $\spec R/I_n$ for all $n$, so that (i) is satisfied.

Suppose that $p$ and $p'$ are points in $Y$ with respective closures $Z$ and $Z'$, and assume that $p'$ is in the closure of $p$, 
in other words that $Z' \subseteq Z$.  Let $\{ E_n \}$ and $\{E'_n \}$ 
be the reduced support divisor sequences of the orbits of $p$ and $p'$, respectively.  Applying Proposition~\ref{support-prop} produces respective pleasantly alternating cycles $G = G(p)$ and $G' = G(p')$.  Write $G_n = \sum g_{n,i} Z_i$ and $G'_n = \sum g'_{n,i} Z'_i$.  Because we want to compare divisors on orbits of the same closed set, we transfer $G_n$ to the orbit of $Z'$ to get $\wt{G}_n = \sum g_{n,i} Z'_i$. 
Suppose that case (i) or (ii) holds for the orbit of $p$; the proof in case (iii) is similar and left to the reader.   Then $E_n = G_n$ for all $n \gg 0$, where $G_n$ is defined in terms of $G$ as in Definition~\ref{def:Gn}.    Since $m_p(I) > 0$ implies $m_{p'}(I) > 0$ for any ideal $I$ of $R$, we must have case (i) or (ii) for the orbit of $p'$ as well, so $E'_n = G'_n$ for $n \gg 0$ and $\wt{G}_n \leq G'_n$ for all $n \gg 0$. 
Now note that any pleasantly alternating cycle $G$ satisfies $\deg G_n = n$ for all $n$, where degree means the sum of the coefficients.  This forces $\wt{G}_n = G'_n$ for all $n \gg 0$, and finally this 
implies that $\wt{G} = \wt{G}_n - \sigma^{-1}(\wt{G}_{n-1}) = G'_n - \sigma^{-1}(G'_{n-1}) = G'$.  In other words, 
$p \equiv p'$ in the notation of Definition~\ref{def:equiv}.

In particular, if the closure $Z$ of $p$ is an irreducible component of $Y$, then all points $p' \in Z$ 
have $p' \equiv p$.  Then if $W$ is a connected component of $Y$, this implies that 
$p \equiv q$ for all $p, q \in W$.   We now claim that each connected component $W$ of $Y$ is $\sigma$-lonely.  For, suppose that $p \in W \cap \sigma^j(W)$ for some $j$.   Then $p = \sigma^j(q)$ for some $q \in W$, so that $p = \sigma^j(q) \equiv q$.   But this is clearly impossible
unless $j = 0$, so $W$ is $\sigma$-lonely as claimed.

Finally, suppose that $W_1$ and $W_2$ are two distinct connected components of $Y$ and that 
$W_1 \cap \sigma^j(W_2) \neq \emptyset$ for some $j \in \mb{Z}$.  We may replace $W_2$ with $\sigma^j(W_2)$, 
obtaining a new $Y$ which still satisfies condition (i), but now has fewer connected components.   Continuing this process if necessary, we arrive at a $Y$ whose decomposition into connected components $Y  = W_1 \cup \dots \cup W_m$ has the property that each $W_i$ is $\sigma$-lonely, and $W_i \cap \sigma^k(W_j) = \emptyset$ for all $k \in \mb{Z}$ 
if $i \neq j$.  Then $Y$ is itself $\sigma$-lonely, proving (iii).  That condition (ii) holds for each connected component $W_i$ of $Y$ was shown earlier in the proof.
\end{proof}

We are now ready to prove our classification theorem.

\begin{theorem}
\label{thm:class}
Let $A' = \bigoplus_n W_n t^n \subseteq K[t, t^{-1}; \sigma]$ satisfy  Hypothesis~\ref{UFD-hyp}.   
Choose an invertible $R$-module $M$ such that the corresponding $\Pic(X)$-twist $A$ satisfies 
$A = \bigoplus_n M_n W_n t^n \subseteq R[t, t^{-1}; \sigma]$, as in Lemma~\ref{lem:trivchange}.

Then there is a $\sigma$-lonely subset $Z \subseteq X$, which is a disjoint union of 
connected components $Z^{(1)}, \dots, Z^{(m)}$, and pleasantly alternating cycles $G^{(i)}$ on the orbit of $Z^{(i)}$, together with ideals $H^{(i)}$, $J^{(i)}$ of $R$ with $R/H^{(i)}$ and $R/J^{(i)}$ supported on $Z^{(i)}$, such that $A = \bigcap_{i = 1}^m B(G^{(i)}, H^{(i)}, J^{(i)})$.  Moreover, $A$ is graded Morita equivalent to 
$B(Z, H, J)$, where $H = \bigcap_i H^{(i)}$ and $J = \bigcap_i J^{(i)}$.
\end{theorem}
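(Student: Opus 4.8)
The plan is to assemble the machinery of Sections~2 and 3. By Lemma~\ref{lem:trivchange} we may replace $A'$ by the $\Pic(X)$-twist $A=\bigoplus_n I_nt^n\subseteq R[t,t^{-1};\sigma]$; by Lemma~\ref{lem:basics}(3), $R$ is regular, $\sigma(R)=R$, and $\sigma$ is a wild automorphism of $X=\spec R$. Apply Lemma~\ref{Y-lem} to obtain the $\sigma$-lonely closed set $Z:=Y$, with connected components $Z^{(1)},\dots,Z^{(m)}$ whose $\sigma$-orbits are pairwise disjoint, with $\spec R/I_n\subseteq\bigcup_{i,s}\sigma^s(Z^{(i)})$ for all $n$, and with all points of a fixed $Z^{(i)}$ equivalent in the sense of Definition~\ref{def:equiv}. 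This last fact attaches to each $Z^{(i)}$ a single pleasantly alternating cycle $G^{(i)}$ on the orbit of $Z^{(i)}$, namely the one produced by Proposition~\ref{support-prop} for any point of $Z^{(i)}$. Two things remain: (a) produce ideals $H^{(i)},J^{(i)}$ with $A=\bigcap_i B(G^{(i)},H^{(i)},J^{(i)})$, and (b) deduce the graded Morita equivalence with $B(Z,H,J)$.

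For (a): since the closed sets $\sigma^s(Z^{(i)})$ are pairwise disjoint, for each $n$ primary decomposition splits $I_n=\prod_i I_n^{(i)}$ into pairwise comaximal ideals, where $I_n^{(i)}$ collects the primary components supported on $\bigcup_s\sigma^s(Z^{(i)})$; and further $I_n^{(i)}=\prod_s (I_n^{(i)})_s$ with $(I_n^{(i)})_s$ supported on $Z^{(i)}_s=\sigma^{-s}(Z^{(i)})$. Localizing the relations $I_m\sigma^m(I_n)\subseteq I_{m+n}$ and the quasi-finite generation relations $I_n=\sum_{j=1}^r I_j\sigma^j(I_{n-j})$ (valid for $n>r$ by Lemma~\ref{lem:fg}) away from all orbits but that of $Z^{(i)}$ shows that $\widehat A^{(i)}:=\bigoplus_n I_n^{(i)}t^n$ is a subalgebra of $R[t,t^{-1};\sigma]$ with $\widehat A^{(i)}_{\ge 0}$ and $\widehat A^{(i)}_{\le 0}$ quasi-finitely generated, and $A=\bigcap_i\widehat A^{(i)}$. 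Proposition~\ref{support-prop}, applied to $A$ (whose support cycles along the orbit of $Z^{(i)}$ coincide with those of $\widehat A^{(i)}$), identifies the reduced support cycle sequence of $\widehat A^{(i)}$ with $\{|G^{(i)}_n|\}$ (or $\{\max(\pm G^{(i)}_n,0)\}$), so $(I_n^{(i)})_s=R$ whenever the coefficient $g^{(i)}_{n,s}$ of $G^{(i)}_n$ at $Z^{(i)}_s$ is $0$. It then remains to prove that $\sigma^{-s}\big((I_n^{(i)})_s\big)$ depends only on the sign of $g^{(i)}_{n,s}$; granting this, define $J^{(i)}$ (resp. $H^{(i)}$) to be this common ideal over the pairs with $g^{(i)}_{n,s}=1$ (resp. $=-1$), and $H^{(i)}=R$ or $J^{(i)}=R$ if no such pair exists. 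Then $R/H^{(i)}$, $R/J^{(i)}$ are supported on $Z^{(i)}$ and, by construction, $I_n^{(i)}=H^{(i)}[(-G^{(i)}_n)^{+}]\,J^{(i)}[(G^{(i)}_n)^{+}]$ for every $n$, i.e. $\widehat A^{(i)}=B(G^{(i)},H^{(i)},J^{(i)})$; intersecting over $i$ yields $A=\bigcap_{i=1}^m B(G^{(i)},H^{(i)},J^{(i)})$. To establish the independence one first uses the superadditivity $F^{(i)}_m+\sigma^{-m}(F^{(i)}_n)\ge F^{(i)}_{m+n}$ of the (non-reduced) support cycles, together with the known form of $F^{(i)}_n$ for $|n|\gg 0$ and the identity $g^{(i)}_{n,s}=d_s-d_{s-n}$ coming from Lemma~\ref{G-lem}, to pin down $F^{(i)}_n$ exactly for all $n$ (locally of the shape $d\,(G^{(i)}_n)^{+}+d'\,(-G^{(i)}_n)^{+}$); one then feeds the multiplication $(I_m^{(i)})_s\,\sigma^m\big((I_n^{(i)})_{s-m}\big)\subseteq(I_{m+n}^{(i)})_s$ back in and chains such inclusions (using Lemma~\ref{lem:easyGfacts}(2) and the fact that quasi-finite generation closes up products for $|n|$ large) to match the local pieces up to the shift by $\sigma^s$.

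For (b): put $H=\bigcap_i H^{(i)}$ and $J=\bigcap_i J^{(i)}$; the $H^{(i)}$ are pairwise comaximal (disjoint orbits), as are the $J^{(i)}$, and a graded-piece-by-graded-piece check gives $B(Z,H,J)=\bigcap_{i=1}^m B(Z^{(i)},H^{(i)},J^{(i)})$, each $Z^{(i)}$ taken with its trivial cycle. Now run the construction in the proof of Proposition~\ref{prop:Morita} simultaneously over all components: each $G^{(i)}$ corresponds to a finite set $S_i\subseteq\mb{Z}$, and one forms a graded right ideal $\mathcal N$ of $B(Z,H,J)$ by intersecting over $i$ the component progenerators $N^{(i)}$ (each supported along the orbit of $Z^{(i)}$, with $\operatorname{End}(N^{(i)})=B(G^{(i)},H^{(i)},J^{(i)})$ over $B(Z^{(i)},H^{(i)},J^{(i)})$). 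Comaximality gives $\mathcal N\subseteq B(Z,H,J)$ and makes $\mathcal N$ a graded right ideal; localizing one orbit at a time exactly as in that proof, and using Lemma~\ref{lem:B-props1}(4), one obtains $\operatorname{End}_{B(Z,H,J)}(\mathcal N)=\bigcap_i B(G^{(i)},H^{(i)},J^{(i)})=A$, while the comaximality identities there (of the form $M_{-n}M'_n+L'_nL_{-n}=R$ for $n\gg 0$) localize and recombine to show $\mathcal N$ is a graded progenerator. Hence $A$ is graded Morita equivalent to $B(Z,H,J)$, and in particular the categories $\rGr A$ and $\rGr B(Z,H,J)$ are equivalent.

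The main obstacle is the independence statement inside (a): Proposition~\ref{support-prop} controls only multiplicities of vanishing (and, once reduced, only whether these are zero), whereas $\widehat A^{(i)}=B(G^{(i)},H^{(i)},J^{(i)})$ demands identifying the ideals $(I_n^{(i)})_s$ on the nose. This forces one to re-engage the ring structure of $\widehat A^{(i)}$ and to handle separately the bounded ``transient'' range of $n$ — where $G^{(i)}_n$ need not be effective and the stabilization in Lemma~\ref{G-lem}(2) has not yet taken hold — from the stable range. Step (b), by contrast, is bookkeeping once one observes that the Morita contexts attached to distinct components are orthogonal because their orbits are disjoint.
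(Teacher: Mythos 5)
Your proposal is correct in outline and takes essentially the same route as the paper's proof: twist into $R[t,t^{-1};\sigma]$ via Lemma~\ref{lem:trivchange}, use Lemma~\ref{Y-lem} and Proposition~\ref{support-prop} to produce the $\sigma$-lonely set $Z$, its components, and one pleasantly alternating cycle per component, split $A=\bigcap_i A^{(i)}$ along the disjoint orbits, identify each $A^{(i)}$ with some $B(G^{(i)},H^{(i)},J^{(i)})$ by showing the local piece of $I_n$ along $Z^{(i)}_s$ depends only on the sign of the coefficient of $G^{(i)}_n$ there, and then obtain the graded Morita equivalence with $B(Z,H,J)$ by intersecting the progenerators supplied by (the proof of) Proposition~\ref{prop:Morita}. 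The one step you leave as a sketch is completed in the paper exactly along the lines you indicate: for $e\ge N$ and $g_{m,i}=1$ the cofactors of $I_e$ and $I_{-e}$ at the relevant positions are the unit ideal, so the multiplication inclusions give containments in both directions, yielding $I_{m,i}=I_{m+e,i}$ and $I_{m+e,i+e}=\sigma^e(I_{m,i})$; chaining these identifies each such piece with $\sigma^i(J)$ for $J:=I_{1,0}$ (after shifting indices so $g_{1,0}=1$), and the coefficient $-1$ case is handled by applying the same argument to $\psi(A)$ using the anti-automorphism of Lemma~\ref{lem:anti}.
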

\begin{proof}
The passage from $A'$ to $A$ is described by Lemma~\ref{lem:trivchange}.  We claim that 
$A$ also satisfies Hypothesis~\ref{UFD-hyp}.   Lemma~\ref{lem:trivchange} shows that $A$ is still simple, and it is easy to see that $A$ remains quasi-finitely generated.
To see that $A$ is an Ore domain, note that $A$ is obviously still a graded Ore domain, with 
graded ring of fractions $Q = Q_{\rm gr}(A) \cong K[t, t^{-1}; \sigma]$ still.  But this is implies that $A$ is an Ore domain, since $A$ has a localization, namely $Q$, which is noetherian and hence Ore.

Write $A = \bigoplus_n I_n t^n$ with $I_n \subseteq R$ 
and let $Z$ be the $\sigma$-lonely subset $Y$ defined by Lemma~\ref{Y-lem}.  We let $Z^{(1)}, \dots, Z^{(m)}$ be the connected 
components of $Z$.
For any ideal $I$ such that $\spec (R/I) \subseteq \bigcup_{k \in \mb{Z}} \sigma^k(Z)$ as sets, 
we may write uniquely $I = I^{(1)} \cap \dots \cap I^{(m)}$
where $R/I^{(i)}$ is supported on $\bigcup_k \sigma^k(Z^{(i)})$.
Then for each $i$ we define $A^{(i)} = \bigoplus_n I_n^{(i)} t^n$.  It is obvious that each $A^{(i)}$ is 
again a subalgebra of $R[t, t^{-1}; \sigma]$, and that 
$A = A^{(1)} \cap \dots \cap A^{(m)}$.  Suppose that the result is true in the special case that $m = 1$ and so 
$Z$ is connected.  Then for each $i$ we have $A^{(i)} \cong B(G^{(i)}, H^{(i)}, J^{(i)})$ for some data $G^{(i)}$, $H^{(i)}$, $J^{(i)}$.
By Proposition~\ref{prop:Morita} and its proof, there is a  $B^{(i)} = B(Z^{(i)}, H^{(i)}, J^{(i)})$-module $L^{(i)} \subseteq R[t, t^{-1}; \sigma]$ such that $\End_{B^{(i)}}(L^{(i)}) = A^{(i)}$, and this gives a graded Morita equivalence between $A^{(i)}$ and $B^{(i)}$.
Then $B = B(Z, H, J) = B^{(1)} \cap B^{(2)} \cap \dots \cap B^{(m)}$ and an easy argument shows that $L = \bigcap L^{(i)}$ is a $B$-module such that $\End_B(L) \cong A$, and $L$ gives a graded Morita equivalence between $A$ and $B$.

Thus we may reduce to the case that $m = 1$ and hence $Z = Z^{(1)}$ is connected, for the rest of the proof.  
It remains to find $G, H, J$ so that $A \cong B(G, H, J)$.  By Lemma~\ref{Y-lem}(2), given any point $p \in Z$ 
with closure $V$, the plesantly alternating cycle $G(p) = \sum g_i V_i$ arising from Proposition~\ref{support-prop}
has coefficients independent of the choice of $p$.   We now write the formal cycle $G = \sum g_i Z_i$, and let 
$G_n = \sum_i g_{n,i} Z_i$ be defined as in Definition~\ref{def:Gn} for each $n$.  
We can write uniquely $I_n = \bigcap_i I_{n,i}$, where 
$\spec R/I_{n,i} \subseteq Z_i = \sigma^{-i}(Z)$.    Note that if $g_{n,i} = 0$, then $I_{n,i} = R$.  This is clear from the result of Proposition~\ref{support-prop} and the fact that $p \equiv q$ for all $p, q \in Z$.

By Lemma~\ref{G-lem} there is $N \geq 1$ such that that for $n \geq N$, $G_n$ and $-G_{-n}$ are effective.
The remainder of the proof is to show that there is are ideals $H,J$ 
such that $I_{n,i} = \sigma^{i}(J)$ if  $g_{n,i} = 1$ and $I_{n, i} = \sigma^{i}(H)$ if $g_{n,i} = -1$.  Then we will have $A = B(G, H, J)$ by definition.

Fix any $e \geq N$, where $N$ is as above.
We have $G_{m+e} = G_m + \sigma^{-m}(G_e)$, where the coefficients of $G_e$ are in $\{0, 1 \}$ by Lemma~\ref{G-lem}.
Suppose that $g_{m,i} = 1$ for some $m$.  Then $g_{m+e, i}  = g_{m,i} + g_{e, i-m}$
forces $g_{e, i-m} = 0$ and $g_{m+e, i} = 1$, since $G_{m+e}$ cannot have a coefficient of $2$.
Using that $G_m = - \sigma^{-m}(G_{-m})$, 
we have $g_{n,i} = - g_{-n,i-n}$ and so 
$g_{e, i-m} = 0$ implies $g_{-e, i-m-e} = 0$ also.
By what we have already shown above, $g_{e, i-m} = 0$ implies $I_{e, i-m} = R$ and
$g_{-e, i-m-e} = 0$ implies $I_{-e,i-m-e} = R$.
The equation $I_{m+e} \supseteq I_m \sigma^m(I_e)$ implies $I_{m+e, i} \supseteq I_{m,i} \sigma^m(I_{e, i-m}) = I_{m,i}$.  Similarly, $I_m \supseteq I_{m+e} \sigma^{m+e}(I_{-e})$ 
implies that $I_{m,i} \supseteq I_{m+e,i}\sigma^{m+e}(I_{-e, i-m-e}) = I_{m+e, i}$.  Thus $I_{m,i} = I_{m+e,i}$.
In summary, we have shown that if $g_{m,i} = 1$ and $e \geq N$, then $g_{m+e, i} = 1$ also and $I_{m,i} = I_{m+e,i}$.  

We claim that if $g_{m,i} = 1$ then we also have $g_{m+e, i+e} = 1$ and 
$I_{m+e, i+e} = \sigma^e(I_{m,i})$.  This is a very similar argument.
Using $G_{m+e} = G_e + \sigma^{-e}(G_m)$ we get $g_{m+e, e+i} = g_{e, e+i} + g_{m,i}$ and hence 
$g_{e, e+i} = 0$ and $g_{m+e, e+i} = 1$.   Then $g_{-e, i} = -g_{e, e+i} = 0$, so $I_{e, e+i} = R = I_{-e, i}$.  
The equations 
$I_{m+e} \supseteq I_{e} \sigma^e(I_m) $ and $I_{m} \supseteq I_{-e} \sigma^{-e}(I_{m+e})$ 
imply that $I_{m+e, e+i} \supseteq \sigma^e(I_{m,i})$ and $I_{m,i} \supseteq \sigma^{-e}(I_{m+e, i+e})$, 
which together give the claim.

Now without loss of generality we may shift indices so that $G = \sum_{i = 0}^m a_i Z_i$, where $a_0 = 1$.  Then 
$g_{1, 0} = 1$ and we let $J = I_{1, 0}$.  Suppose that $g_{m,i} = 1$, and choose $a,b, c,e \gg N$ such that $i + e = b$ and $m + a + e = 1 + b + c$.   
 Then using the isomorphisms above we have 
\[
I_{m, i} = I_{m+a, i} = \sigma^{-e}(I_{m+a + e, i+e}) = \sigma^{-e}(I_{1+b+c, b})
\]
and
\[
\sigma^i(J)  = \sigma^i(I_{1,0}) =  \sigma^{i}(I_{1+c,0}) = \sigma^{-e+b}(I_{1+c, 0}) = \sigma^{-e}(I_{1+b+c, b}).
\]
In other words, we have shown that if $g_{m,i} = 1$ then $I_{m,i} = \sigma^i(J)$, as we wished.

To construct the ideal $H$, we may use the anti-automorphism $\psi: xt^n \mapsto \sigma^{-n}(x) t^n$ from 
Lemma~\ref{lem:anti}.  Let $\wt{A} = \psi(A)$.
Write $\wt{A} = \bigoplus_n \wt{I}_n t^n \subseteq R[t, t^{-1}; \sigma]$, and apply the proof already given to $\wt{A}$. 
Writing $\wt{I}_n = \bigcap_i \wt{I}_{n,i}$ with $\spec R/\wt{I}_{n,i} \subseteq Z_i$, then 
$\wt{I}_{n,i} = \sigma^n(I_{-n,i-n})$.  It is easy to check that the associated pleasantly alternating cycle  is actually the same as $G$.  
Setting $H = \wt{I}_{1,0}$ now, we get if $g_{m,i} = 1$ then $\wt{I}_{m,i} = \sigma^i(H)$ by what has already been proved.
Now if $g_{m,i} = -1$, 
then $g_{-m, i-m} = 1$ and $I_{m,i} = \sigma^m(\wt{I}_{-m, i-m}) = \sigma^m(\sigma^{i-m}(H)) = \sigma^i(H)$.  
This finishes the proof.
\end{proof}

\section{$\mb{Z}$-graded birationally commutative simple domains of minimal GK-dimension}
\label{sec:minGK}

In this section, we explore the special case of the above results for birationally commutative $\mb{Z}$-graded simple 
rings $A$ where $\GK A$ is as small as possible.  We will see that this restriction on the GK-dimension 
further constrains the structure of the rings in a significant way.   We remind the reader of our global hypothesis that the base field $k$ is algebraically closed, which will be important in several proofs in this section.

Suppose that $A$ is a $\mb{Z}$-graded $k$-algebra which is an Ore domain with graded ring of fractions $K[t, t^{-1}; \sigma]$, where $K$ is 
a field with $\trdeg(K/k)  = d < \infty$.  Then it easy to show that $\GK (A) \geq d + 1$, by considering a subalgebra of 
the form $k \langle V + ka \rangle$, where $V \subseteq K$ is the span of a transcendence basis for $K/k$ and $a$ is any nonzero element of positive degree.   Example~\ref{ex:weyl-loc} shows that the GK-dimension of $A$ is bigger than $d + 1$ in general.   The next result, which is a variation of an idea of James Zhang from \cite{Zh}, 
shows that $\GKdim(A) = d+1$ can occur only for $K$ and $\sigma$ with a special property.

\begin{lemma}
\label{fixed-V-lem} Let $A$ be a finitely generated $\mb{Z}$-graded $k$-algebra with $A_n \neq 0$ for all $n \in \mb{Z}$.  
Assume that $A$ is a birationally 
commutative Ore domain with $Q_{\operatorname{gr}}(A) = K[t, t^{-1}; \sigma]$.
Suppose that $\GK A = \trdeg K/k + 1$.

Then for every finite-dimensional $k$-subspace $W \subseteq A_0$, there is a
finite-dimensional vector $k$-subspace $V \subseteq K$ with $W
\subseteq V$ and  $\sigma(V) = V$.
\end{lemma}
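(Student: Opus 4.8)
Write $d = \trdeg K/k$. The plan is to reduce the statement to the assertion that $\sigma$ is \emph{locally algebraic} on $A_0$ --- i.e.\ that every element of $A_0$ lies in a finite‑dimensional $\sigma$‑stable $k$‑subspace of $K$ --- and then to rule out the contrary by a Gelfand--Kirillov dimension estimate adapting an idea of Zhang \cite{Zh}. First, for a fixed finite‑dimensional $W \subseteq A_0$ the conclusion is equivalent to finite‑dimensionality of $U := \sum_{i \in \mb{Z}} \sigma^i(W)$: if $\dim_k U < \infty$ then $\sigma$ permutes the summands so $\sigma(U)=U \supseteq W$, while any finite‑dimensional $\sigma$‑stable $V \supseteq W$ contains $U$. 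Setting $U_n = \sum_{i=0}^{n-1}\sigma^i(W)$, if $U_{n+1} = U_n$ for some $n$ then $\sigma^n(W) \subseteq U_n$, hence $\sigma^{n+1}(W) \subseteq \sigma(U_n) = \sigma(W) + \dots + \sigma^n(W) \subseteq U_n$, and by induction $U_m = U_n$ for all $m \ge n$. So either $\dim_k U_n$ is eventually constant, in which case that stable value is the desired $V$, or $\dim_k U_n$ grows at least linearly. Assume the latter for some $W$; equivalently (writing $W$ in terms of a $k$‑basis) some single $w \in A_0$ has $\dim_k \sum_{i \in \mb{Z}} k\,\sigma^i(w) = \infty$. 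We aim to contradict $\GK A = d+1$.

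Fix $0 \neq x \in A_1$ and $0 \neq y \in A_{-1}$ (possible since all $A_n \neq 0$), and set $u = xy$, $v = yx$, which are nonzero elements of $A_0$ as $A$ is a domain. Writing $x = ct$ inside $Q_{\operatorname{gr}}(A) = K[t, t^{-1}; \sigma]$, one computes directly that $x^n a\, y^n = u_n\, \sigma^n(a)$ and $y^n a\, x^n = v_n\, \sigma^{-n}(a)$ for $a \in A_0$ and $n \ge 0$, where $u_n = u\,\sigma(u)\cdots\sigma^{n-1}(u) \neq 0$ and $v_n = v\,\sigma^{-1}(v)\cdots\sigma^{-n+1}(v) \neq 0$; in particular $x^n a\, y^n \in A_0$. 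Next choose a finitely generated subalgebra $R_0 \subseteq A_0$ with $\operatorname{Frac}(R_0) = K$ --- take $R_0$ generated by finitely many numerators and denominators of a set of field generators of $K$, which is finitely generated over $k$ by Lemma~\ref{D-fg-lem}(2) --- so that, since $\trdeg K/k = d$, one has $\GK R_0 = d$ and the span $R_0^{\le p}$ of products of at most $p$ generators satisfies $\dim_k R_0^{\le p} \gtrsim p^d$. Enlarging $R_0$, we may assume $1, u, v, w \in R_0$. Let $V_0$ be a finite‑dimensional generating subspace of $R_0$ containing $1$, put $V = V_0 + V_0 x + V_0 y$, and let $B = k\langle V\rangle \subseteq A$, so $\GK B \le \GK A = d+1$.

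The main point --- and the main obstacle --- is now to show that $\GK B \ge d+2$, contradicting the above. Following Zhang's computation of the Gelfand--Kirillov dimension of skew (Laurent) polynomial extensions, the idea is that the degree‑zero component of $B$ already grows too fast. Concretely, consider the degree‑zero elements $\mu \cdot x^i w^e y^i = \mu\, u_i\, \sigma^i(w)^e \in A_0$, where $\mu$ runs over monomials of degree at most $p$ in a fixed transcendence basis contained in $R_0$, $e \le p$, and $i \le q$; each has word length $O(p+i)$ in the generators of $V$. One expects these to span $\gtrsim N^{d+1}$ dimensions inside $(V^N)_0$ for $N \sim p+q$: the $\sim p^d$ monomials $\mu$ give the commutative polynomial growth of $R_0$, the powers $w^e$ give one more direction since $w$ is transcendental, and the $\sim q$ ``shifts'' $\sigma^i$ accessed via $a \mapsto x^i a\, y^i$ give the last direction --- the key being that $\dim_k \sum_i k\,\sigma^i(w) = \infty$ makes the $\sigma^i(w)$ genuinely spread out, rather than (as for ``polynomial‑type'' orbits, which do not occur here) confined to a fixed finite‑dimensional subspace. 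Granting $\dim_k (V^N)_0 \gtrsim N^{d+1}$, the products $(V^N)_0 \cdot x^j$ lie in degree $j$ for $0 \le j \le N$, hence in distinct degrees, so $\dim_k V^{2N} \ge \sum_{j=0}^{N}\dim_k (V^N)_0 \gtrsim N^{d+2}$ and $\GK B \ge d+2$.

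The crux is thus the linear‑independence count behind $\dim_k (V^N)_0 \gtrsim N^{d+1}$, and this is where I expect the real work to lie. Two difficulties must be met. First, the ``cocycle'' $u_i$ (and $v_i$), present because $x$ need not be a unit of $A$: a priori multiplication by $u_i$ could collapse dimensions, and one must argue --- as in Zhang's analysis, exploiting that $K$ has transcendence degree $d$ and that the $\sigma$‑orbit of $w$ is infinite --- that the powers $\sigma^i(w)^e$ for varying $i$ contribute the full product of the three growth rates, not merely the $\lesssim N^d$ coming from the polynomial part alone. Second, one needs that ``$\sigma$ not locally algebraic'' genuinely furnishes a $w$ for which $\{\sigma^i(w)^e : 0 \le i,e \le N\}$ spans $\gtrsim N^2$ dimensions; this is the truly combinatorial input, and it is precisely the portion of Zhang's argument that must be transported to the graded setting. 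The remaining steps --- the reduction to a single $w$, the choice of $R_0$, and the passage from $B$ back to $A$ --- are routine.
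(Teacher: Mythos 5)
There is a genuine gap, and you have essentially flagged it yourself: the entire argument hinges on the lower bound $\dim_k (V^N)_0 \geq c\,N^{d+1}$, i.e.\ on a linear-independence count for the elements $\mu\, u_i\, \sigma^i(w)^e$, and this is never proved --- you defer it as ``the crux'' and ``where I expect the real work to lie.'' This is precisely the step that cannot be waved through, and it is exactly where the paper's proof brings in its one nontrivial external input: Zhang's \emph{sensitive multiplicity condition} $SM(U_0,c,d)$, which holds for $K$ by \cite[Theorem 3.2]{Zh} since $K/k$ is a finitely generated extension of transcendence degree $d$ and $k$ is algebraically closed. Moreover, your heuristic for where the extra dimensions come from is off: since $\trdeg K/k = d$, the monomials $\mu$ together with the powers $w^e$ live in a finitely generated commutative subalgebra of $K$ and can span at most $O(p^d)$ dimensions, so ``$w$ transcendental'' cannot supply an extra direction. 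The genuine source of extra growth is the unboundedness of $\dim_k\bigl(W + \sigma(W) + \cdots + \sigma^i(W)\bigr)$, and quantifying how that unboundedness propagates through products is exactly what $SM(U_0,c,d)$ does; without it (or a proved substitute) your count, including the issue of whether multiplication by the cocycles $u_i$ collapses dimensions, remains open.

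For comparison, the paper's argument avoids your second difficulty altogether by a better choice of test algebra: after enlarging $W$ so that $1 \in W$, $W$ generates $K$ as a field, and $U_0 a \subseteq W$ for a common denominator $a \in A_0$, one takes $B = k\langle W + kt\rangle$ (after normalizing $t \in A_1$). Because $A_0 \subseteq K$ is commutative, a direct computation gives $(W+kt)^n = \bigoplus_{i=0}^n W_i^{\,n-i} t^i$ with $W_i = W + \sigma(W) + \cdots + \sigma^i(W)$ --- no extraneous multipliers $u_i$ appear, since the $\sigma$-twists are produced by moving $t$'s rather than by sandwiching with $x^i(\cdot)y^i$. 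If the $\dim_k W_i$ were unbounded then $\dim_k W_i \geq i+1$ for all $i$, and $SM(U_0,c,d)$ gives $\dim_k W_i^{\,n-i} \geq c\,i\,(n-i)^d$; summing over $i$ yields $\dim_k (W+kt)^n \geq C n^{d+2}$, contradicting $\GK A = d+1$. So your overall strategy (reduce to boundedness of the $\sigma$-orbit span, then contradict the GK hypothesis via a Zhang-style growth estimate) is the same as the paper's, but the proposal as written is incomplete at its decisive step, and the sandwich construction you chose makes that step strictly harder than it needs to be.
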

\begin{proof}
Since $A_1 \neq 0$, we can change the choice of $t$ (by replacing $t$ by $zt$ for some $z \in K$) so that $t \in A_1$.  Note that $K/k$ is a finitely generated field extension, and $K$ is the fraction field of $A_0$, by Lemma~\ref{D-fg-lem}.   Let $\trdeg K/k = d$.

In \cite{Zh}, Zhang defines the following concept.  Given a
commutative $k$-algebra $C$ of GK-dimension $d$, the algebra $C$ satisfies the
\emph{sensitive multiplicity condition} $SM(U_0, c, d)$ if there is
a finite dimensional $k$-subspace $U_0 \subseteq C$ and a constant
$c > 0$ with the following preoprty:  for every finite-dimensional $k$-subspace $W
\subseteq C$ with $U_0a \subseteq W$ for some regular element $a \in
C$, one has $\dim_k W^n \geq c (\dim_k W) n^d$ for all $n \geq 0$.
Now \cite[Theorem 3.2]{Zh} shows that since $K/k$ is a finitely
generated field extension of transcendence degree $d$ and $k$ is algebraically closed, then there
is a $U_0 \subseteq K$  and $c > 0$ such that $K$ satisfies $SM(U_0,
c, d)$.

Now let $W$ be any finite-dimensional $k$-subspace of $A_0$.  We need to find
a $\sigma$-stable finite-dimensional subspace $V \subseteq K$ which contains $W$.
Clearly it does no harm to enlarge $W$, so we assume that $1 \in W$
and since $K$ is the fraction field of $A_0$, we may assume that $W$ generates $K$ as a field.
Writing a basis of $U_0$ as fractions of elements in $A_0$ with a common denominator, 
we see that there is $0 \neq a \in A_0$ such that $U_0a \subseteq A_0$.
Enlarging $W$ further to $W + U_0a$, we can assume that $U_0a \subseteq W$.

The rest of the proof closely follows the same idea as the proof of \cite[Theorem 3.2]{Zh}, but we reproduce 
the argument here for the convenience of the reader.  Consider the subalgebra $B = k \langle W + kt \rangle \subseteq
A$.   A direct calculation shows that 
\[
(W + kt)^n = \bigoplus_{i = 0}^n (W + \sigma(W) + \dots + \sigma^i(W))^{n-i} t^i.
\]
Write $W_i = W + \sigma(W) + \dots + \sigma^i(W)$, and suppose that $\dim_k W_i$ is an unbounded function of $i$.
We have $\dim_k W_i \geq i +1$, for all $i$, since if $\dim_k W_i = \dim_k W_{i+1}$, say, then 
$W_n = W_i$ for all $n \geq i$, a contradiction.
Now since $K$ satisfies $SM(U_0, c, d)$ and $U_0 a \subseteq W$, we have $\dim_k W_i^{n-i} \geq c (\dim_k W_i) (n-i)^d$.
Then 
\[
\dim_k (W + kt)^n \geq \sum_{i = 0}^n c (\dim_k W_i) (n-i)^d \geq \sum_{i = 0}^n ci(n-i)^d \geq C n^{d+2}
\]
for some constant $C$, and so $\GK A \geq \GK B \geq d + 2$, contradicting the hypothesis.  
 Thus $\dim_k W_i$ is a bounded function of $n$, and so some $W_i \supseteq W$ satisfies $\sigma(W_i) = W_i$.
\end{proof}

The previous result applies even to commutative rings $A$, but in this paper we are interested primarily in 
$\mb{Z}$-graded rings that are highly noncommutative in some sense, in particular simple. 
We say that an algebra $B$ over a field $k$ is \emph{centerless} if its center $Z(B)$ is equal to $k$.  
The ring $Q = K[t, t^{-1}; \sigma]$, where $K$ is a field, has center $Z(Q) = \{a \in K | \sigma(a) = a \}$, as can easily be checked.
In most common circumstances, the graded quotient ring $Q$ of a simple birationally commutative $\mb{Z}$-graded algebra $A$ 
will be automatically centerless (see Theorem~\ref{thm:inT}(3) below).   Thus, we now study the further restrictions 
on $(K, \sigma)$ that arise from a centerless hypothesis on $Q$.

\begin{proposition}
\label{rat-prop}
Let $\sigma: K \to K$ be an automorphism of a
finitely generated field extension $K/k$. Assume that $Q = K[t, t^{-1}; \sigma]$ is centerless, in other words that $\{ a \in K | \sigma(a) = a
\} = k$.
\begin{enumerate}
\item
Let $T$ be the sum of all generalized eigenspaces of $\sigma$ inside $K$.
Then $T$ is a $\sigma$-invariant subring of $K$ and there are $x_1, \dots, x_m \in K$, algebraically independent over $k$, such that
either 
\begin{enumerate}
\item[(A)] $T = k[x_1, x_2^{\pm 1}, \dots, x_m^{\pm 1}]$, $\sigma(x_1) = x_1 + 1$, and $\sigma(x_i) = p_ix_i$ for some $p_i \in k$, $i \geq 2$; or 
\item[(B)]$T = k[x_1^{\pm 1}, x_2^{\pm 1}, \dots, x_m^{\pm 1}]$ and $\sigma(x_i) = p_ix_i$ for some $p_i \in k$, $i \geq 1$.
\end{enumerate}
In each case the $p_i$ generate a free abelian subgroup of $k^{\times}$, and if Case (A) occurs then $\cha k = 0$.

\item If there exists a finite-dimensional $k$-subspace $V \subseteq K$
such that $V$ generates $K$ as a field and $\sigma(V) = V$, then $T$ has fraction field $K$
and $K \cong k(x_1, \dots, x_m)$ with automorphism as in case (A) or (B) above.
\end{enumerate}
\end{proposition}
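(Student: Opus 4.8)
The plan is to analyze $\sigma$ on $T$ through its multiplicative Jordan--Chevalley decomposition. An element of $K$ lies in $T$ exactly when its $\sigma$-orbit spans a finite-dimensional $k$-space, so $T$ is the directed union of the finite-dimensional $\sigma$-invariant subspaces and $\sigma|_T$ is a locally finite $k$-algebra automorphism; the standard theory then yields commuting $k$-algebra automorphisms $\sigma_s$ (semisimple) and $\sigma_u$ (unipotent) of $T$ with $\sigma=\sigma_s\sigma_u$, the factors being algebra maps by the usual check on the multiplication maps $V\otimes V\to V$ of finite-dimensional invariant subspaces. Semisimplicity gives an eigenspace grading $T=\bigoplus_{\lambda\in\Lambda}T_\lambda$ over the set $\Lambda\subseteq k^{\times}$ of eigenvalues that occur, with $T_\lambda T_\mu\subseteq T_{\lambda\mu}$, each $T_\lambda$ preserved by $\sigma_u$, and $\sigma$ acting on $T_\lambda$ as $\lambda\cdot\sigma_u$. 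Comparing graded components shows that the $\sigma$-fixed elements of $K$ (all of which lie in $T$) are exactly the $\sigma_u$-fixed elements of $T_1$, so the centerless hypothesis says $\operatorname{Fix}(\sigma_u|_{T_1})=k$, whence $T_1=k$ or $T_1\supsetneq k$; it also forces $\Lambda$ to be torsion-free, since a torsion eigenvalue $\zeta$ of order $n$ would give an eigenvector $0\ne b\in T_\zeta$ with $\sigma(b^{n})=b^{n}$, hence $b^{n}\in k$, so $b$ is a scalar times an $n$th root of unity and thus lies in $k$ ($k$ algebraically closed, $K$ a domain), contradicting $\sigma(b)=\zeta b$.

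Next I would pin down $T_1$ and show $\sigma_u$ is trivial except in one case. If some $b\in T_\lambda$ is not $\sigma_u$-fixed, choosing $b$ at the bottom of a Jordan block of $\sigma_u|_{T_\lambda}$ gives $(\sigma_u-1)b=c$ with $c$ a nonzero $\sigma$-eigenvector in $T_\lambda$; then $b/c\in T_1$ and $\sigma(b/c)=b/c+1$, so $\sigma_u\ne 1$ on $T$ forces $T_1\supsetneq k$. In characteristic $p$ this is impossible: a nontrivial unipotent automorphism has locally finite $p$-power order, so an $x_1\in T_1\setminus k$ with $\sigma(x_1)=x_1+1$ would make $\sigma$ of order $p$ on $k(x_1)$ and hence $k(x_1^{p}-x_1)\subseteq\operatorname{Fix}(\sigma)=k$, absurd; thus in characteristic $p$ we have $T_1=k$ and $\sigma_u=\operatorname{id}_T$. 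In characteristic $0$, if $T_1\supsetneq k$ I pass to the locally nilpotent $k$-derivation $\delta=\log\sigma_u$ of $T$ (well defined since $\sigma_u-1$ is locally nilpotent), with $\ker\delta=T^{\circ}:=\operatorname{Fix}(\sigma_u)$ and $\ker(\delta|_{T_1})=k$; picking $s\in T_1$ with $\delta(s)\in k^{\times}$ and $\delta^{2}(s)=0$ and setting $x_1=s/\delta(s)$ produces a slice ($\delta(x_1)=1$), so by the slice theorem for locally nilpotent derivations $T=T^{\circ}[x_1]$ is a polynomial ring and $\sigma(x_1)=\sigma_u(x_1)=x_1+1$. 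In every case we are reduced to the $\Lambda$-graded ring $T^{\circ}=\bigoplus_{\lambda\in\Lambda}T^{\circ}_\lambda$, in which $T^{\circ}_1=k$, each $T^{\circ}_\lambda$ is at most one-dimensional over $k$, and every nonzero homogeneous element is a unit (its inverse is again a $\sigma$-eigenvector, hence locally finite and $\sigma_u$-fixed).

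I expect the main obstacle to be showing that $\Lambda$ is \emph{finitely generated}. Take a maximal multiplicatively independent subset $\lambda_1,\dots,\lambda_n$ of the torsion-free group $\Lambda$, with $u_i$ spanning $T^{\circ}_{\lambda_i}$; a short $\sigma$-eigenvalue argument shows the $u_i$ are algebraically independent over $k$, so $n\le\trdeg(K/k)<\infty$, and for any $\lambda$ some power $u_\lambda^{N}$ is a scalar multiple of a monomial in the $u_i$. Hence $T^{\circ}=k[\Lambda]$ is integral over $k[u_1^{\pm 1},\dots,u_n^{\pm 1}]$ and is normal, being a directed union of Laurent polynomial rings over the finitely generated subgroups of $\Lambda$; since $\operatorname{Frac}(T^{\circ})$ is a subfield of $K$, hence finitely generated over $k$ and a finite extension of $k(u_1,\dots,u_n)$, $T^{\circ}$ is the integral closure of a finitely generated $k$-algebra in a finite field extension and so, by Noether's finiteness theorem, is itself a finitely generated $k$-algebra; writing a finite generating set through its homogeneous components shows $\Lambda$ is finitely generated. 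Being finitely generated and torsion-free, $\Lambda\cong\mb{Z}^{m-1}$ (case A, $T_1=k[x_1]$) or $\mb{Z}^{m}$ (case B, $T_1=k$); choosing $x_2,\dots,x_m$ to span the lines $T^{\circ}_{\epsilon_i}$ over a $\mb{Z}$-basis $\epsilon_i$ of $\Lambda$, the monomials in the $x_i^{\pm 1}$ form a $k$-basis of $T^{\circ}$, so $T^{\circ}=k[x_2^{\pm 1},\dots,x_m^{\pm 1}]$ with $\sigma(x_i)=p_ix_i$, $p_i:=\epsilon_i$. Assembling gives $T=k[x_1,x_2^{\pm1},\dots,x_m^{\pm1}]$ in case A and $T=k[x_1^{\pm1},\dots,x_m^{\pm1}]$ in case B, with the $x_i$ algebraically independent, the $p_i$ generating the free abelian group $\Lambda\le k^{\times}$, and $\cha k=0$ in case A; this proves (1).

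Finally, (2) is immediate from (1): a finite-dimensional $\sigma$-invariant $V\subseteq K$ lies in $T$ by the very definition of $T$, so if $V$ generates $K$ as a field then $K=k(V)\subseteq\operatorname{Frac}(T)\subseteq K$, forcing $\operatorname{Frac}(T)=K$; part (1) then identifies $K$ with $k(x_1,\dots,x_m)$ equipped with the automorphism of case (A) or (B).
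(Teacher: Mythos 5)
Your proposal is correct, but it reaches the structure theorem by a genuinely different route than the paper. The paper argues directly with eigenvectors: centerlessness forces each eigenspace $W_\lambda$ to be one-dimensional, one checks $V_\lambda = v_\lambda V_1$, and in case (A) one identifies $V_1 = k[u]$ from the observation that $V_1$ is a union of single Jordan blocks, so it has at most one $\sigma$-stable subspace of each dimension; finite generation of the eigenvalue group $\Lambda$ is then obtained by showing that strictly larger finitely generated subgroups $\Lambda' \subsetneq \Lambda''$ give strictly larger intermediate fields $K(\Lambda') \subsetneq K(\Lambda'')$ (a coset-disjointness argument inside the Laurent/polynomial ring $R(\Lambda'')$), and invoking ACC on subfields of the finitely generated extension $K/k$. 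You instead pass through the multiplicative Jordan--Chevalley decomposition $\sigma = \sigma_s\sigma_u$ of the locally finite automorphism, handle the additive variable via $\delta = \log\sigma_u$ and the slice theorem for locally nilpotent derivations (giving $T = \operatorname{Fix}(\sigma_u)[x_1]$ in characteristic $0$), and prove $\Lambda$ finitely generated by a commutative-algebra argument: the rank bound $\operatorname{rank}(\Lambda) \leq \trdeg(K/k)$ from algebraic independence of eigenvectors with multiplicatively independent eigenvalues, normality of $T^{\circ}$ as a directed union of Laurent rings, integrality over a Laurent subring, and Noether's finiteness of integral closure, so that $T^{\circ}$ is a finitely generated algebra and its homogeneous generators generate $\Lambda$. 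Both arguments are sound (your characteristic-$p$ exclusion of case (A) via the fixed element $x_1^p - x_1$ is essentially the paper's argument with $u(u+1)\cdots(u+p-1)$, and your part (2) is identical to the paper's); the trade-off is that the paper's proof is more elementary and self-contained, using only linear algebra plus ACC on intermediate fields, while yours leans on heavier but standard machinery (functoriality of Jordan decomposition for algebra automorphisms, the slice theorem, Noether finiteness) and in exchange is more structural, cleanly separating the unipotent and semisimple parts and giving the extra information that $T^{\circ}$ is module-finite over a Laurent subring. The only points you gloss are routine: that the Jordan factors are algebra automorphisms (the multiplication map lands in a possibly larger invariant subspace, not in $V$ itself, but the intertwining argument is unaffected) and the one-line verification that each $T^{\circ}_\lambda$ is at most one-dimensional, which follows from $T^{\circ}_1 = k$ and invertibility of homogeneous elements exactly as in the paper.
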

\begin{proof}
(1)  For $\lambda \in k$, let $V_{\lambda} \subseteq K$ be the generalized eigenspace for the eigenvalue $\lambda$, 
in other words $V_{\lambda} = \{ z \in K | (\sigma - \lambda)^n z = 0\ \text{for some}\ n \geq 1 \}$.
 Let $\Lambda \subseteq k$ be the set of all eigenvalues of the action of $\sigma$ on $K$, and let $T = \bigoplus_{\lambda \in \Lambda} V_{\lambda} \subseteq K$ (note that the sum is automatically direct).  Inside $V_{\lambda}$ we have 
the $\lambda$-eigenspace $W_{\lambda} = \{z \in K | \sigma(z) = \lambda z \}$, and the centerless hypothesis on $Q$ is 
equivalent to $W_1 = k$.  If $0 \neq v, w \in W_{\lambda}$, then $vw^{-1} \in W_1$ and so $v \in kw$.  Thus each $W_{\lambda}$ has 
dimension $1$, say $W_{\lambda} = k v_{\lambda}$ for some eigenvector $v_{\lambda}$.  
It is easy to check that for any polynomial $f \in k[x]$ and $z \in K$,  
we have $f(\sigma) (zv_{\lambda}) = f(\lambda \sigma) (z)$.  In particular, applying this to $f = (x -\lambda)^n$ for all $n$ shows that $V_{\lambda} = v_{\lambda} V_1$.

Suppose that $W_1 \subsetneq V_1$; then we say we are in case (A).   In this case there is $u \in K$ such that $\sigma(u) = u + 1$.   If $k$ has positive characteristic $p$, then $w = u(u+1) \dots (u + p -1)$ is $\sigma$-fixed, so $w \in k$.  This shows that $u$ is algebraic over $k$ and since $k$ is algebraically closed, $u \in k$, a contradiction.  Thus $k$ has characteristic $0$.   

Since $W_1 = k$, every finite-dimensional 
$\sigma$-stable subspace $Y$ of $V_1$ is a single Jordan block for the action of $\sigma$.   Given two such subspaces $Y, Y'$ of the 
same dimension, then $Y + Y'$ is also $\sigma$-stable and acted on as a single Jordan block; then it has a unique flag of $\sigma$-stable 
subspaces and thus $Y = Y'$.  Since $k[u]_{\leq n}$ is $\sigma$-stable of dimension $n + 1$ and contained in $V_1$, it must be the unique such 
subspace of that dimension, and since $V_1$ is the union of its finite-dimensional $\sigma$-fixed subspaces we have $V_1 = k[u]$.  Then  $T = \bigoplus_{\lambda \in \Lambda} v_{\lambda} k [u]$. 
The alternative is case (B), that is that $V_1 = k$.  Then $T = \bigoplus_{\lambda \in \Lambda}  k v_{\lambda}$.  It is clear that in either case $T$ is a ring, since $v_{\lambda} v_{\mu} \in k v_{\lambda \mu}$.

Suppose case (A).  The rest of the proof in case (B) is similar to the proof of case (A) but easier, and is left to the reader.
Suppose that $\Lambda'$ is a finitely generated subgroup of $\Lambda$.  Note that $\Lambda$ is a torsionfree subgroup of $k^{\times}$:  if $\lambda \in \Lambda$ has $\lambda^d = 1$, then $v_{\lambda}^d$ is $\sigma$-fixed and so $v_{\lambda}^d \in k$.  Since $k$ is algebraically closed,
then $v_{\lambda} \in k$ and $\lambda = 1$.  So $\Lambda'$ is a free group of finite rank, say with basis $p_2, \dots, p_m$.
Set $x_i = v_{p_i}$.  Since the $p_i$ are a free basis of $\Lambda'$, distinct words in the $x_i$ are associated to distinct eigenvalues and so are $k$-independent.  Thus $k[x_2, \dots, x_m]$ is a polynomial ring.  It follows setting $u = x_1$ that 
$R(\Lambda') = \bigoplus_{\lambda \in \Lambda'} v_{\lambda} k[u]$ is a ring isomorphic to the 
polynomial ring $k[x_1, x_2, \dots, x_m]$.  Let $K(\Lambda') \cong k(x_1, \dots, x_m)$ be its fraction field.

Suppose that $\Lambda' \subsetneq \Lambda''$ are two distinct finitely generated subgroups of $\Lambda$.
We claim that the fields $K(\Lambda') \subsetneq K(\Lambda'')$ are also distinct.   Choose some $\mu \in \Lambda'' \setminus \Lambda'$.  
If $v_{\mu} \in K(\Lambda')$, then $v_{\mu}= f/g$ where $f, g \in R(\Lambda') = \sum_{\lambda \in \Lambda'} k[u] v_{\lambda}$.
Looking at $f = g v_{\mu}$ in the bigger ring $R(\Lambda'')$, we see that 
$\sum_{\lambda \in \Lambda'} k[u] v_{\lambda} \cap \sum_{\lambda \in \Lambda'} k[u] v_{\lambda \mu} \neq \emptyset$,
contradicting that the cosets $\Lambda'$ and $\Lambda' \mu$ are disjoint.  This proves the claim.

Now since $K/k$ is a
finitely generated field extension, $K$ has ACC on subfields containing $k$.  This implies that $\Lambda$ is a finitely generated group.
Take $\Lambda = \Lambda'$ above.  Then it is clear that $T = \bigoplus_{\lambda} v_{\lambda} k[u] = k[x_1, x_2^{\pm 1}, \dots, x_m^{\pm 1}]$
and that $\sigma$ has the stated form.

(2).  Since $\sigma(V) = V$, $V$ is a sum of generalized eigenspaces.  Thus $V \subseteq T$, and the result follows from part (1).
\end{proof}

The rings $T$ and automorphisms $\sigma$ occurring in the previous result give important examples of wild automorphisms of 
affine varieties.  
\begin{lemma}
\label{T-lem}
Let $(T, \sigma)$ be as in either case (A) or (B) of Proposition~\ref{rat-prop}, where $T$ has fraction field $K = k(x_1, \dots, x_m)$, 
and let $X = \spec T$.
\begin{enumerate}
\item $T$ is  $\sigma$-simple, or equivalently $\sigma: X \to X$ is wild.
\item  If $C \subseteq T$ is a subalgebra with $\sigma(C) = C$, and $C$
is also $\sigma$-simple with field of fractions $K$, then $C = T$.
\end{enumerate}
\end{lemma}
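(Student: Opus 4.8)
The plan is to treat cases (A) and (B) of Proposition~\ref{rat-prop} in parallel, exploiting the eigenspace decomposition of $\sigma$ on $K$. Recall from the proof of Proposition~\ref{rat-prop} that $T = \bigoplus_{\lambda \in \Lambda} V_\lambda$, where $\Lambda$ is the (free abelian) group of eigenvalues of $\sigma$ on $K$ and $V_\lambda$ is the generalized $\lambda$-eigenspace: in case (B), $V_\lambda = k v_\lambda$ with $v_\lambda$ a $\sigma$-eigenvector, while in case (A), $V_\lambda = v_\lambda k[u]$ with $u = x_1$ and $\sigma(u) = u+1$. In both cases this is a $\Lambda$-grading of the ring $T$ (as $v_\lambda v_\mu \in k^\times v_{\lambda\mu}$), and any $\sigma$-stable subspace $C$ of $T$ automatically decomposes as $C = \bigoplus_\lambda (C \cap V_\lambda)$, because every element of $C$ lies in a finite sub-sum of the $V_\lambda$ and the projections to the $V_\lambda$ on that finite-dimensional $\sigma$-stable space are polynomials in $\sigma$, so they keep us inside $C$.

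For part (1), in case (B) the ring $T = k[x_1^{\pm 1}, \dots, x_m^{\pm 1}]$ carries a diagonal $\sigma$-action on the monomial basis $\{x^a : a \in \mb{Z}^m\}$ with eigenvalue $p^a := p_1^{a_1}\cdots p_m^{a_m}$; freeness of the $p_i$ makes $a \mapsto p^a$ injective, so these eigenvalues are pairwise distinct. Given a nonzero $\sigma$-stable ideal $I$ and $0 \neq f \in I$ with finite monomial support $S$, the elements $\sigma^0(f), \dots, \sigma^{|S|-1}(f)$ all lie in $I$, and a Vandermonde argument in the distinct numbers $\{p^a : a \in S\}$ recovers each monomial component of $f$ inside $I$; as monomials are units, $I = T$. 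In case (A), write $T = S[u]$ with $S = k[x_2^{\pm1}, \dots, x_m^{\pm1}]$, which is $\sigma$-simple by case (B) applied to $S$. Given $0 \neq I$ with $\sigma(I) = I$, let $d$ be the least $u$-degree of a nonzero element of $I$; the leading $u$-coefficients of the elements of $I$ of $u$-degree $d$ form, together with $0$, a nonzero $\sigma$-stable ideal of $S$, hence all of $S$, so there is $f = u^d + (\text{lower-order terms in } u) \in I$. If $d = 0$ then $1 \in I$; if $d \geq 1$ then $\sigma(f) - f \in I$ is nonzero of $u$-degree exactly $d-1$ — its $u^{d-1}$-coefficient is $d + \sigma(a_{d-1}) - a_{d-1}$ for the next coefficient $a_{d-1}$, and this cannot vanish since $\cha k = 0$ and in $S$ the only elements $s$ with $\sigma(s) - s \in k$ are the constants — contradicting minimality of $d$. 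Either way $I = T$; since $T$ is noetherian, the equivalence with wildness of $\sigma : X \to X$ is the remark following the definitions of ``wild'' and ``$\sigma$-simple''.

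For part (2), decompose $C = \bigoplus_\lambda C_\lambda$ as above, write $C_\lambda = v_\lambda P_\lambda$ with $P_\lambda \subseteq k$ (case (B)) or $P_\lambda$ a $\tau$-stable $k$-subspace of $k[u]$ (case (A), $\tau$ the shift $u \mapsto u+1$), and put $\Gamma = \{\lambda : P_\lambda \neq 0\}$, a submonoid of $\Lambda$ with $1 \in \Gamma$. Step one: $\sigma$-simplicity forces $\Gamma$ to be a subgroup, since the non-units of the monoid $\Gamma$ form a monoid ideal $N$ and, were $N$ nonempty, $\bigoplus_{\lambda \in N} C_\lambda$ would be a proper nonzero $\sigma$-stable ideal of $C$. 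Step two: compute the $P_\lambda$. A nonzero $\tau$-stable subspace of $k[u]$ contains a nonzero constant (iterate the difference operator $\tau - \mathrm{id}$, which drops degree by one in characteristic $0$), so $1 \in P_\lambda$ for all $\lambda \in \Gamma$; combined with $P_\lambda P_{\lambda^{-1}} \subseteq P_1$ (from the $\Lambda$-grading of $C$) this yields $P_\lambda = P_1$ for every $\lambda \in \Gamma$. In case (B) this gives $P_1 = k$ and $C = \bigoplus_{\lambda\in\Gamma} k v_\lambda$. In case (A), $P_1$ is a $\tau$-stable $k$-subalgebra of $k[u]$; the nonzero $\tau$-stable subspaces of $k[u]$ are exactly the $k[u]_{\leq d}$ and $k[u]$ itself, of which only $k$ and $k[u]$ are subalgebras, and since $\operatorname{Frac}(C) = K$ contains $u$ whereas $\bigoplus_\lambda k v_\lambda = k[x_2^{\pm1}, \dots, x_m^{\pm1}]$ has fraction field $k(x_2, \dots, x_m)$, we must have $P_1 = k[u]$ and $C = \bigoplus_{\lambda\in\Gamma} v_\lambda k[u]$. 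Step three: $\Gamma = \Lambda$. If $\mu \in \Lambda \setminus \Gamma$, then $v_\mu \in K = \operatorname{Frac}(C)$ gives $v_\mu g = f$ for some nonzero $g, f \in C$; but in the direct sum $T = \bigoplus_\nu V_\nu$ the element $v_\mu g$ is supported on the coset $\mu\Gamma$ and $f$ on $\Gamma$, and $\mu\Gamma \cap \Gamma = \emptyset$ since $\Gamma$ is a subgroup, forcing $f = g = 0$, a contradiction. Hence $\Gamma = \Lambda$ and $C = T$.

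I expect the main obstacle to be the case-(A) portion of part (2): identifying the subspaces $P_\lambda$ precisely and, in particular, ruling out $P_1 = k$, which is exactly where the hypothesis $\operatorname{Frac}(C) = K$ enters in an essential way; the analogous care is needed in part (1), case (A), to guarantee that $\sigma(f) - f$ genuinely lowers the $u$-degree once the leading coefficient has been normalized to $1$. The remaining ingredients — Vandermonde determinants, the elementary structure of $\tau$-stable subspaces of $k[u]$ in characteristic $0$, and the combinatorics of finitely generated submonoids of a free abelian group — are routine.
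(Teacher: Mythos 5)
Your proof is correct, and for part (2) it is essentially the paper's argument: decompose $C$ along the generalized eigenspaces, show the index set $\Gamma$ is a subgroup by playing the semigroup ideal of non-units against $\sigma$-simplicity, and use $\operatorname{Frac}(C)=K$ twice --- once to force $\Gamma=\Lambda$ (your disjoint-coset computation is exactly the argument the paper imports from the proof of Proposition~\ref{rat-prop}), and once to force the $k[u]$-part to be all of $k[u]$ (the paper instead uses superadditivity of the degree function $\delta$ to push $\delta(1)$ to $\infty$, but the content is the same). Where you genuinely diverge is part (1): the paper deduces both parts from a single structural claim, namely that every $\sigma$-invariant subspace of $T$ has the form $\bigoplus_{\lambda\in Y}k[x_1]_{\le\delta(\lambda)}v_\lambda$, and then reads off $\sigma$-simplicity because an ideal is closed under multiplication by $k[x_1]$ and by the units $v_\lambda$; you instead argue directly --- a Vandermonde argument over the distinct eigenvalues in case (B), and in case (A) a reduction to (B) via the leading-coefficient ideal in $S=k[x_2^{\pm1},\dots,x_m^{\pm1}]$ together with the finite-difference descent on $u$-degree, where $\cha k=0$ enters exactly as it does in the paper. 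Your route makes part (1) independent of the classification of invariant subspaces (though you need the eigenspace decomposition of $\sigma$-stable subspaces anyway for part (2)), while the paper's route buys economy, one structural statement serving both parts. One small repair to your preamble: a ``finite sub-sum of the $V_\lambda$'' is not finite-dimensional in case (A); what you want is that each element of $C$ lies in a finite-dimensional $\sigma$-stable subspace (e.g.\ $\bigoplus_{\lambda\in F}v_\lambda k[u]_{\le n}$, or the span of its $\sigma$-orbit), on which the generalized-eigenspace projections are polynomials in $\sigma$ --- with that phrasing your decomposition $C=\bigoplus_\lambda(C\cap V_\lambda)$ is precisely the paper's claim about $\sigma$-invariant subspaces.
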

\begin{proof}
We assume case (A) throughout the proof, since the proof for case (B) is similar but easier.
Recall as in the proof of  Proposition~\ref{rat-prop} that $T$ decomposes as a 
sum of generalized eigenspaces of the form $T = \bigoplus_{\lambda \in \Lambda} k[x_1] v_{\lambda}$,
where $\Lambda$ is the free abelian group generated by the $p_i$.   As we also saw in that proof, 
the nonzero $\sigma$-invariant finite-dimensional subspaces of $k[x_1]$ are precisely the $k[x_1]_{\leq n}$ for $n \in \mb{N}$, 
and so the nonzero $\sigma$-invariant subspaces of $k[x_1]$ are the $k[x_1]_{\leq n}$ for $n \in \mb{N} \cup \{ \infty \}$.
Now it is easy to check that any $\sigma$-invariant $k$-subspace $S$ of $T$ is of the form 
$S = \bigoplus_{\lambda \in Y} k[x_1]_{\leq \delta(\lambda)} v_{\lambda}$ for some subset $Y$ of $\Lambda$ and 
constants $\delta(\lambda) \in \mb{N} \cup \{ \infty \}$.

(1).  Suppose that $I$ is a nonzero $\sigma$-invariant ideal of $T$.  As above, write
$I = \bigoplus_{\lambda \in Y} k[x_1]_{\leq \delta(\lambda)} v_{\lambda}$.  Since $I$ is closed under multiplication by $k[x_1]$,
$\delta(\lambda) = \infty$ for all $\lambda \in Y$.  Since $I$ is closed under multiplication by each $v_{\lambda}$
with $\lambda \in \Lambda$, $Y$ is closed under multiplication with anything in $\Lambda$ and so $Y = \Lambda$.  Thus $I = T$.

(2).  Again we write $C = \bigoplus_{\lambda \in Y} k[x_1]_{\leq \delta(\lambda)} v_{\lambda}$.  Since $C$ is a $k$-algebra,
$Y$ is closed under multiplication and $1 \in Y$, so $Y$ is a sub-semigroup of $\Lambda$.  The closure of $C$ under
multiplication also forces $\delta(\lambda_1 \lambda_2) \geq \delta(\lambda_1) + \delta(\lambda_2)$ (with the obvious convention when one or more elements is $\infty$), for all $\lambda_1, \lambda_2 \in Y$.
Suppose that $Y$ is not a group.  Then it has a proper semigroup ideal $0 \subsetneq M \subsetneq Y$ (i.e. $MY \subseteq M$)
and so $\bigoplus_{\lambda \in M} k[x_1]_{\leq \delta(\lambda)} v_{\lambda}$ is a proper $\sigma$-invariant ideal of $C$.
Since $C$ is $\sigma$-simple, this is a contradiction, and so $Y$ is a subgroup of $\Lambda$.  Now the same argument as in the proof of 
Proposition~\ref{rat-prop}(2) shows that if $Y$ is a proper subgroup of $\Lambda$, then the fraction field of $C$ is smaller 
than $K$.  Thus $Y = \Lambda$.

Now consider $\delta$.  If $\delta(\lambda) = 0$ for all $\lambda \in \Lambda$, then
$C \subseteq k[x_2, \dots, x_n]$ does not have the correct field of fractions.  Thus $\delta(\lambda) > 0$ for some $\lambda$.
Then $\delta(1) \geq \delta(\lambda) + \delta(\lambda^{-1}) > 0$ and $\delta(1) \geq \delta(1) +
\delta(1)$, forcing $\delta(1) = \infty$.  This implies that $\delta(\mu) \geq
\delta(\mu) + \delta(1) = \infty$ for all $\mu \in \Lambda$.  So $C = \bigoplus_{\lambda \in \Lambda} k[x_1] v_{\lambda}$.
Thus $C = T$.
\end{proof}

\begin{remark}
\label{rem:wild}
Let $(T, \sigma)$ be as in case (A) or (B) of Proposition~\ref{rat-prop}.  Then $X = \spec T$ is an algebraic group, namely 
$X \cong k \times (k^*)^{m-1}$ in case (A) and in $X \cong (k^*)^m$ in case (B), where
here $k$ is the additive group of the field and $k^*$ is the multiplicative group.  Moreover, the induced
automorphism $\sigma: X \to X$ is a translation automorphism in this algebraic group, and we have 
seen that it is a wild automorphism. 

In \cite{RRZ}, wild automorphisms of projective varieties were studied and it was proved that
in dimension $\leq 2$, all such are translation automorphisms of abelian varieties \cite[Theorem 6.5]{RRZ}.  It was also 
conjectured that the same holds in all dimensions \cite[Conjecture 0.3]{RRZ}.   The examples above suggest that it is also interesting 
to consider the affine version of the wild automorphism problem.  Namely, must a wild automorphism of an affine 
variety be a translation automorphism of a commutative affine algebraic group?  
\end{remark}

We now put together the various pieces above to prove the following summary result.
\begin{theorem}
\label{thm:inT}
Let $A$ be a $\mb{Z}$-graded finitely generated $k$-algebra with $A_n \neq 0$ for all $n \in \mb{Z}$.  
Assume that $A$ is a  birationally commutative Ore domain  
with $Q = Q_{\operatorname{gr}}(A) = K[t, t^{-1}; \sigma]$, and that $\GK(A) = \trdeg(K/k) + 1$.  
\begin{enumerate}
\item  Suppose that $Q$ is centerless.  Then $K = k(x_1, \dots, x_m)$ is a rational function field, 
$\sigma: K \to K$ is of the form as in type (A) or (B) in Proposition~\ref{rat-prop}, and $A_0 \subseteq T$
for the corresponding ring $T$.

\item If $Q$ is centerless and $A$ is simple, then $A_0 = T$.

\item If $A$ is primitive and noetherian and the base field $k$ is uncountable, then $Q$ is automatically centerless.
\end{enumerate}
\end{theorem}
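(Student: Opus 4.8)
The plan is to obtain parts (1) and (2) by assembling results already in hand, and to prove part (3) — the only substantive new ingredient — by a short localization argument.

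\emph{Parts (1) and (2).} Assume $Q$ is centerless. Since $K=\operatorname{Frac}(A_0)$ by Lemma~\ref{D-fg-lem}, choose a finite-dimensional $k$-subspace $W\subseteq A_0$ that generates $K$ as a field; Lemma~\ref{fixed-V-lem} produces a finite-dimensional $\sigma$-stable $V\subseteq K$ with $W\subseteq V$, and $V$ still generates $K$, so Proposition~\ref{rat-prop}(2) applies and puts $(K,\sigma)$ in one of the forms (A), (B), with $T$ the sum of the generalized eigenspaces of $\sigma$ on $K$. Every finite-dimensional $\sigma$-stable subspace of $K$ is a sum of generalized eigenspaces (Jordan form over the algebraically closed $k$), so Lemma~\ref{fixed-V-lem} also forces $A_0\subseteq T$, giving (1). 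For (2), suppose moreover that $A$ is simple. The subring $C\subseteq K$ generated by the $\sigma^i(A_0)$ is $\sigma$-stable with fraction field $K$, is $\sigma$-simple by Lemma~\ref{lem:basics}(1), and lies in $T$; hence $C=T$ by Lemma~\ref{T-lem}(2). As $T$ is generated over $A_0$ by finitely many elements, each integral over $A_0$ by Lemma~\ref{lem:basics}(2), the extension $A_0\subseteq T$ is finite, so the last assertion of Lemma~\ref{lem:basics}(2) yields $A_0=C=T$.

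\emph{Part (3).} Assume $A$ is primitive, Noetherian, and $k$ uncountable; we must show $Z(Q)=k$. Fix a faithful simple right $A$-module $M$. Since $A$ is finitely generated over $k$, $\dim_k M\le\dim_k A\le\aleph_0<|k|$, so Amitsur's lemma (the Nullstellensatz for $k$-algebras) shows $D:=\operatorname{End}_A(M)$ is algebraic over $k$, hence $D=k$ because $D$ is a division ring and $k$ is algebraically closed. Let $\mc{C}$ be the (two-sided Ore) set of nonzero homogeneous elements of $A$, so $A_{\mc{C}}=Q$. Because $M$ is simple, faithful and cyclic it is $\mc{C}$-torsion-free, so $M$ embeds in $N:=M\otimes_A Q=M_{\mc{C}}$, which is a simple right $Q$-module with $\operatorname{ann}_Q(N)=0$ (again from faithfulness of $M$). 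Now let $z\in Z(Q)$; we may assume $z\neq 0$. Put $M':=\sum_{i\in\mb{Z}}Mz^i\subseteq N$. Since $z$ is central, each $Mz^i$ is a right $A$-submodule of $N$ isomorphic to $M$, so $M'\cong M^{(\kappa)}$ for some $\kappa\geq 1$. Applying the exact functor $-\otimes_A Q$ to $M'\hookrightarrow N$ and using $N\otimes_A Q\cong N$ gives an embedding of right $Q$-modules $N^{(\kappa)}\cong M'\otimes_A Q\hookrightarrow N$. But $Q\cong K[t,t^{-1};\sigma]$ is a principal right ideal domain that is not a division ring, so each of its simple modules is finite-dimensional and nonzero over $K$; comparing $K$-dimensions in $N^{(\kappa)}\hookrightarrow N$ forces $\kappa=1$, i.e. $M'=M$. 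Thus right multiplication by $z$, and likewise by $z^{-1}$, carries $M$ into $M$, so right multiplication by $z$ is an $A$-module automorphism of $M$, i.e. an element $\lambda\in\operatorname{End}_A(M)=k$. Then $M(z-\lambda)=0$, and since $N=M_{\mc{C}}$ and $z-\lambda$ is central in $Q$, also $N(z-\lambda)=0$; hence $z-\lambda\in\operatorname{ann}_Q(N)=0$ and $z=\lambda\in k$. Therefore $Z(Q)=k$; in particular $\sigma$ automatically has infinite order, since otherwise $t^N\in Z(Q)\setminus k$.

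The step that requires care is the $K$-dimension count forcing $\kappa=1$ — that adjoining to $M$ all of its $z$-translates does not enlarge the associated simple $Q$-module — which rests on $K[t,t^{-1};\sigma]$ being a non-artinian principal right ideal domain whose simple modules have finite $K$-dimension. Everything else is routine bookkeeping: flatness of Ore localization, the standard correspondence between saturated submodules and submodules of a localization, and the input $\operatorname{End}_A(M)=k$. (One could instead, once $\kappa$ is known finite, apply Cayley--Hamilton to the $k$-matrix by which $z$ acts on $M'\cong M^{(\kappa)}$ and conclude $z$ is algebraic over $k$, hence in $k$; but the embedding $N^{(\kappa)}\hookrightarrow N$ already makes $\kappa=1$, so this is not needed.)
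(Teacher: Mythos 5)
Your part (1) is essentially the paper's argument and is fine. In part (2), however, you lean on Lemma~\ref{lem:basics}(1),(2), and that lemma carries the blanket hypothesis that $A_0$ is noetherian --- which is \emph{not} assumed in Theorem~\ref{thm:inT} and is not automatic (at this stage $A_0$ is only known to be some subring of $T$, and subrings of noetherian domains need not be noetherian). Citing part (1) of that lemma is harmless, since its proof never uses noetherianness; but the integrality statement in Lemma~\ref{lem:basics}(2) is proved exactly by the noetherian trick (a conductor $c\,\sigma^i(A_0)\subseteq A_0$ only yields integrality because $A_0$ is noetherian), so your step ``each generator of $T$ is integral over $A_0$, hence $A_0\subseteq T$ is finite'' is unjustified here. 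The paper flags precisely this point and reroutes: since $C=T$ is a finitely generated $k$-algebra, $C$ is generated by the $\sigma^n(A_0)$ with $|n|\le r$; multiplying the conductors gives a single $0\neq x\in A_0$ with $xC\subseteq A_0$; then one forms $A'=\bigoplus_n CA_nt^n$, notes $xA'\subseteq A$, and uses simplicity of $A$ (the annihilator of $A'/A$ is a nonzero two-sided ideal) to force $A=A'$ and hence $A_0=C=T$, with no integrality or module-finiteness of $A_0\subseteq T$ needed.

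Part (3) has a genuine gap: the assertion ``because $M$ is simple, faithful and cyclic it is $\mc{C}$-torsion-free'' is false. Over the Weyl algebra $A$ (which satisfies every hypothesis of (3) when $k$ is uncountable of characteristic $0$), the module $M=A/xA$ is simple, cyclic and faithful (every nonzero module over a simple ring is faithful), yet the class of $1$ is killed by the nonzero homogeneous element $x$, so $M$ is entirely $\mc{C}$-torsion and $N=M\otimes_AQ=0$. For such an $M$ your whole localization argument --- simplicity of $N$, $\operatorname{ann}_Q(N)=0$, the $K$-dimension count forcing $\kappa=1$ --- collapses; faithfulness gives no control whatsoever on homogeneous torsion. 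To rescue your route you would need to produce a faithful simple module that is $\mc{C}$-torsion-free, which is neither what you argued nor clearly possible in general. The paper avoids the issue entirely by quoting the standard result (Brown--Goodearl, Lemma II.7.13 and Proposition II.7.16) that a primitive noetherian $k$-algebra of countable dimension over an uncountable algebraically closed field has $Z(Q(A))$ algebraic over $k$, hence equal to $k$, and then observing that $Z(Q_{\rm gr}(A))\subseteq Z(Q(A))$. Your Amitsur-type deduction that $\operatorname{End}_A(M)=k$ is correct, but by itself it does not reach the center of $Q$.
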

\begin{proof}
(1).   Note that $K$ is the fraction field of $A_0$, and $K/k$ is a finitely generated field extension, by Lemma~\ref{D-fg-lem}.  By Lemma~\ref{fixed-V-lem}, given any finite-dimensional subspace $V$
of $A_0$, then $V$ is contained in a finite-dimensional
subspace $W$ of $K$ with $\sigma(W) = W$.  In particular, taking $V$ to be a subset of $A_0$ which generates $K$ as a field,
there is a finite-dimensional $\sigma$-invariant subspace $W$ of $K$ which generates $K$ as a field.  By  Proposition~\ref{rat-prop}, we get that $K = k(x_1, \dots, x_m)$ is a rational function field, and that $\sigma$ is of type (A) or (B).  The corresponding ring $T$ consists of those elements in $K$ which are sums of generalized eigenvectors for $\sigma$.
Any finite-dimensional $\sigma$-invariant $k$-subspace of $K$ is then contained in $T$, and so we conclude that $A_0 \subseteq T$.

(2).   This is a similar argument as in Lemma~\ref{lem:basics}, but we do not assume that $A_0$ is noetherian so a slightly different 
proof is needed.  Let $C$ be the $k$-subalgebra of $K$ generated by $\{ \sigma^n(A_0) | n \in \mb{Z}\}$.  
The same proof as in Lemma~\ref{lem:basics}(1) shows that $C$ is $\sigma$-simple.  
 Since $A_0 \subseteq T$ by part (1),  $C \subseteq T$ also.   Then $C = T$ by Lemma~\ref{T-lem}(2). 
In particular, $C$ is a finitely generated $k$-algebra.  

Now since $C$ is finitely generated, $C$ is generated by $\{ \sigma^n(A_0) | -r \leq n \leq r \}$ 
for some $r$.  The argument in 
Lemma~\ref{lem:basics}(2) constructs for each $i$ an element $x_i$ such that $x_i \sigma^i(A_0) \subseteq A_0$, 
and thus there is $x = \prod x_i$ such that $x C \subseteq A_0$.  By clearing denominators we can assume that $x \in A_0$.
Now writing $A' = \bigoplus_{n \in \mb{Z}} C A_n$, then $A' \subseteq K[t, t^{-1}; \sigma]$ is a subring 
such that $xA' \subseteq A$.  As in the proof of Lemma~\ref{lem:basics}(2), this forces $A = A'$ since $A$ is simple, and so $A_0 = C = T$.

(3)  This is a standard result.   Namely, \cite[Lemma II.7.13, Proposition II.7.16]{BG} show that under these hypotheses, the center 
of the full quotient ring $Q(A)$ of $A$ is algebraic over $k$, and thus equal to $k$ by our standing hypothesis that $k$ is algebraically closed.
But the center of the graded quotient ring $Q = Q_{\rm gr}(A)$ is no bigger than the center of $Q(A)$. 
\end{proof}

\begin{remark}
Although simple rings are our main interest in this paper, parts (1) and (3) of the theorem above show 
that primitive birationally commutative $\mb{Z}$-graded algebras $A$ of minimal GK-dimension already 
have the same restrictions on their graded quotient ring (but may have more freedom as to what $A_0$ can be).  It would be interesting to try to classify maximal orders, for example, in this more general context.   
\end{remark}

We end this section by stating the special case of our main theorems in this paper for rings of GK-dimension 2, 
and thus proving Theorem~\ref{thm:GK2-intro}.  As mentioned in the introduction, understanding 
this case was the authors' original main motivation.
\begin{theorem}
\label{thm:GK2}
Let $A$ be a $\mb{Z}$-graded simple finitely generated $k$-algebra which is a domain, where $k$ is
algebraically closed, $\GK A = 2$, and $A_i \neq 0$ for all $i \in \mb{Z}$.
\begin{enumerate}
\item $A$ is an Ore domain and $Q_{\rm gr}(A) \cong K[t, t^{-1}; \sigma]$, where $K = k(u)$
is a rational function field.  Moreover, $A_0 = T$ where either 
either (A) $T = k[u]$, $\sigma(u) = u+1$ and $\cha k = 0$; or (B) $T = k[u, u^{-1}]$, and $\sigma(u) =p u$ for some non-root of unity $p \in k^*$. 
 \item $A \cong \bigcap_{i = 1}^m B(G_i, H_i, T)$ for some points $p_i$ on distinct $\sigma$-orbits, pleasantly alternating cycles $G_i$ supported on the orbit of $p_i$, and ideals $H_i$.  
The algebra $A$ is graded Morita equivalent to a generalized Weyl algebra $B = B(Z, H, T) \cong T(\sigma, f)$, for 
$Z = \{p_1, \dots, p_m \}$ and some ideal $H = fT$ with $T/H$ supported along $Z$.
\end{enumerate}
\end{theorem}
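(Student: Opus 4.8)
The plan is to derive Theorem~\ref{thm:GK2} as the $\GK=2$ special case of Theorems~\ref{thm:inT} and~\ref{thm:class}.

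\emph{Part (1).} Since $\GK A = 2 < \infty$, $A$ is automatically an Ore domain, and $Q := Q_{\operatorname{gr}}(A) \cong D[t,t^{-1};\sigma]$ for a division ring $D$; Lemma~\ref{D-fg-lem}(2) together with $\GK A = 2$ then forces $D = K$ to be a finitely generated field of transcendence degree $1$ over $k$. Simplicity of $A$ passes to the Ore localization $Q$, and a simple $K[t,t^{-1};\sigma]$ forces $\sigma$ to have infinite order (otherwise some $t^{n}-1$, $n=\operatorname{ord}\sigma$, is a proper ideal), so the center of $Q$ is the fixed field $K^{\sigma}$. One rules out $K^{\sigma} \neq k$ by a transcendence-degree count: $K^{\sigma} \supsetneq k$ would make $K/K^{\sigma}$ finite, hence $Q$ finite over the commutative ring $K^{\sigma}[t,t^{-1}]$ and thus a PI domain, which being simple would be a division ring --- impossible as $t-1$ is not invertible in $Q$. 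Thus $Q$ is centerless, and since $\GK A = 2 = \trdeg(K/k)+1$ Theorem~\ref{thm:inT} applies: part (1) of that theorem gives $K = k(u)$ rational in one variable with $\sigma$ of type (A) or (B) of Proposition~\ref{rat-prop} and $A_0 \subseteq T$, and part (2), using simplicity, gives $A_0 = T$. In type (B) the scalar $p$ is a non-root of unity since it generates a free abelian subgroup of $k^{\times}$. This is Theorem~\ref{thm:GK2}(1).

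\emph{Part (2), setup.} I would check that Hypothesis~\ref{UFD-hyp} holds: $A$ is simple and finitely generated (hence quasi-finitely generated), an Ore domain with all $A_i \neq 0$ and $Q_{\operatorname{gr}}(A) = K[t,t^{-1};\sigma]$, $\trdeg(K/k)=1$, and $R = A_0 = T$ is a finitely generated \emph{regular} $k$-algebra --- in fact a principal ideal domain --- so conditions (i) and (ii) are automatic. Since $T$ is a UFD, $\Pic(X)=0$ for $X=\spec T$, so the $\Pic(X)$-twist of $A$ lying inside $T[t,t^{-1};\sigma]$ provided by Lemma~\ref{lem:trivchange} is actually \emph{isomorphic} to $A$ by Lemma~\ref{lem:triv-change}; replace $A$ by this copy. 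Theorem~\ref{thm:class} then produces a $\sigma$-lonely $Z = Z^{(1)} \sqcup \dots \sqcup Z^{(m)} \subseteq X$ with connected components $Z^{(i)}$, pleasantly alternating cycles $G^{(i)}$ on the orbit of $Z^{(i)}$, and ideals $H^{(i)}, J^{(i)}$ with $T/H^{(i)}$, $T/J^{(i)}$ supported on $Z^{(i)}$, such that $A = \bigcap_{i=1}^{m} B(G^{(i)}, H^{(i)}, J^{(i)})$, with $A$ graded Morita equivalent to $B(Z,H,J) = \bigcap_i B(Z^{(i)}, H^{(i)}, J^{(i)})$ where $H = \bigcap_i H^{(i)}$, $J = \bigcap_i J^{(i)}$ and the cycles $Z^{(i)}$ are trivial.

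\emph{Part (2), the curve.} Because $X$ is a smooth affine curve, its proper closed subsets are finite sets of closed points, so each $Z^{(i)}$ is a single closed point $p_i$, $Z = \{p_1,\dots,p_m\}$, and $\sigma$-loneliness forces the $p_i$ to lie on distinct (necessarily infinite) $\sigma$-orbits; since $T$ is a PID, any ideal with quotient supported at $p_i$ is a power $\mf{m}_{p_i}^{e}$, so write $H^{(i)} = \mf{m}_{p_i}^{a_i}$, $J^{(i)} = \mf{m}_{p_i}^{b_i}$. The computational heart is the identity: for any pleasantly alternating cycle $G$ on the orbit of a point $p$,
\[
\bigl(\,\Pic(X)\text{-twist of } B(G, \mf{m}_p^{\,a}, \mf{m}_p^{\,b}) \text{ by } \mc{O}_X(bG)\,\bigr) \;=\; B(G, \mf{m}_p^{\,a+b}, T);
\]
degree by degree, $\mc{O}_X(bG)_n = \mc{O}_X(bG_n)$ (Lemma~\ref{lem:regfacts}) cancels the $J$-contribution $\mf{m}_p^{\,b}[G_n^{+}]$ and augments $\mf{m}_p^{\,a}[(-G_n)^{+}]$ to $\mf{m}_p^{\,a+b}[(-G_n)^{+}]$, using Lemma~\ref{G-lem}(1) to keep the coefficients of $G_n$ in $\{-1,0,1\}$. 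Applying the single $\Pic(X)$-twist by $M = \mc{O}_X\bigl(\sum_i b_i G^{(i)}\bigr)$ --- well defined as the $G^{(i)}$ lie on disjoint orbits, principal since $T$ is a PID, hence an isomorphism --- and using that twisting by an invertible module commutes with intersection, I get $A \cong \bigcap_{i=1}^{m} B(G^{(i)}, \mf{m}_{p_i}^{\,a_i+b_i}, T)$, the first assertion of part (2). For the last assertion, the analogous twist of the trivial-cycle ring $B(Z,H,J) = \bigcap_i B(Z^{(i)}, \mf{m}_{p_i}^{\,a_i}, \mf{m}_{p_i}^{\,b_i})$ (graded Morita equivalent to $A$), namely by $\prod_i \mf{m}_{p_i}^{-b_i}$, is a graded Morita equivalence and identifies $A$ with $\bigcap_i B(Z^{(i)}, \mf{m}_{p_i}^{\,a_i+b_i}, T) = B(Z, \widehat H, T)$ where $\widehat H = \bigcap_i \mf{m}_{p_i}^{\,a_i+b_i} = (f)$ for a product $f$ of powers of uniformizers, $T/\widehat H$ supported exactly on $Z$; and $B(Z,(f),T)$ is isomorphic to a generalized Weyl algebra $T(\sigma,\cdot)$ as recalled in the introduction. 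Its generator's zero divisor is supported on the $m$ distinct $\sigma$-orbits of the $p_i$ with multiplicities $a_i+b_i$, so it has no two distinct roots on a single $\sigma$-orbit. With Part (1) this gives Theorem~\ref{thm:GK2}, hence Theorem~\ref{thm:GK2-intro}.

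\emph{Main obstacle.} The real difficulty is packaged into Theorems~\ref{thm:inT} and~\ref{thm:class}; granted those, the two steps that still need care are the short ``centerless'' argument (to unlock Theorem~\ref{thm:inT}) and the curve-specific bookkeeping of vanishing multiplicities in the $\Pic(X)$-twist identity above, where one must verify that the twists used are genuine \emph{isomorphisms} --- precisely where $T$ being a UFD enters --- and not merely graded Morita equivalences.
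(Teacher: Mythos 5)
Your argument is correct and follows the paper's own route: part (1) is obtained exactly as in the paper (Ore from finite GK-dimension, Lemma~\ref{D-fg-lem}, a PI argument to force $Q$ centerless, then Theorem~\ref{thm:inT} with simplicity giving $A_0=T$), and part (2) is the paper's reduction via Hypothesis~\ref{hyp:main}, Theorem~\ref{thm:class}, and $\Pic(X)$-twists, which are honest isomorphisms here because $T$ is a PID (Lemma~\ref{lem:triv-change}). Your divisor bookkeeping for the key twist is sound: multiplying the degree-$n$ piece of $B(G,\mf{m}_p^{a},\mf{m}_p^{b})$ by $\mc{O}_X(bG_n)$ changes the divisor $a(-G_n)^{+}+bG_n^{+}$ into $(a+b)(-G_n)^{+}$, since $G_n-G_n^{+}=-(-G_n)^{+}$; this componentwise twist by $\mc{O}_X(b_iG^{(i)})$ accomplishes exactly what the paper accomplishes with an explicit principal twist, and is a clean way to organize it.

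The one genuine omission is the final identification $B(Z,\wh{H},T)\cong T(\sigma,f)$ with a generalized Weyl algebra: you justify it ``as recalled in the introduction,'' but the introduction in turn defers to the proof of this very theorem for that fact, so the citation is circular, and the isomorphism is part of the statement being proved. The missing verification is short and is how the paper finishes: write $\wh{H}=gT$ and set $f=\sigma^{-1}(g)$; map the generalized Weyl algebra $T(\sigma,f)$ to $T[t,t^{-1};\sigma]$ by fixing $T$ and sending $x\mapsto t$, $y\mapsto ft^{-1}$, check Bavula's relations $yx=f$, $xy=\sigma(f)$, $x\alpha=\sigma(\alpha)x$, $y\alpha=\sigma^{-1}(\alpha)y$, observe the image is all of $B(Z,\wh{H},T)$ because that ring is generated in degrees $-1,0,1$ (Lemma~\ref{lem:B-props1}(2)), and get injectivity either from $T$-freeness of the GWA or, as the paper does, from Bavula's simplicity criterion, using precisely your last observation that $f$ has at most one root on each $\sigma$-orbit. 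With that paragraph supplied (and noting the harmless shift $f=\sigma^{-1}(g)$), your proof is complete and coincides with the paper's.
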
 
\begin{proof} 
(1) $A$ is Ore since it is a domain of finite GK-dimension.  Its graded quotient ring 
has the form $Q = Q_{\rm gr}(A) \cong K[t, t^{-1}; \sigma]$ for some finitely
generated field extension $K/k$ with $\trdeg K/k = 1$, by
Lemma~\ref{D-fg-lem}.
If there is $v \in K$ such that $\sigma(v) = v$ with $v \not \in k$,
then since $k$ is algebraically closed, $v$ is transcendental over
$k$, so $K/k(v)$ is a finite extension.  This forces $\sigma: K \to
K$ to have finite order, and then $Q = K[t, t^{-1}; \sigma]$ is a PI
ring, and so $A$ is as well.   This clearly contradicts the hypothesis that $A$ is simple.
Thus $\{x \in K | \sigma(x) = x \} =k$; that is, $Q$ is centerless.  
Then by Theorem~\ref{thm:inT}, $K = k(u)$ and there are $T$ and $\sigma$ falling into one of the two listed cases, with $A_0 = T$.

(2)  Note that Hypothesis~\ref{hyp:main} holds for $A$.   Let $X = \spec T$.  Since $A_0 = T$ is a PID, a 
$\Pic(X)$-twist does not change $A$ up to isomorphism, by Lemma~\ref{lem:triv-change}.   
Thus by Theorem~\ref{thm:class}, $A \cong \bigcap_{i = 1}^m B(G^{(i)}, H^{(i)}, J^{(i)})$, for 
some $Z = \{ p_1, \dots, p_m \}$ with each $p_i$ on a distinct $\sigma$-orbit, $G^{(i)}$ a pleasantly 
alternating cycle on the orbit of $p_i$, and ideals $H^{(i)}, J^{(i)}$.  
For $0 \neq x \in K$, we can do an explicit $\Pic(X)$-twist by replacing the ring
$A = \bigoplus_{n \in \mb{Z}} V_n t^n$ by $\bigoplus_{n \in \mb{Z}} x_n V_n t^n$, where $x_0 = 1$, 
$x_n = x \sigma(x) \dots \sigma^{n-1}(x)$ for $n > 0$, and $x_{-n} = [\sigma^{-1}(x) \dots \sigma^{-n}(x)]^{-1}$ 
for $n > 0$.  (One can also just interpret this as a change of variable $t$ in the quotient ring, replacing $t$ by $x^{-1} t$.)
If $J^{(i)} = x_i T$ and $H^{(i)} = y_i T$, then applying this twist with $x = (x_i)^{-1}$ 
to $B(G^{(i)}, H^{(i)}, J^{(i)})$
shows that 
$B(G^{(i)}, H^{(i)}, J^{(i)}) \cong B(G^{(i)}, \wh{H}^{(i)}, T)$, where 
$\wh{H}^{(i)} = x_i y_i T$.   Then applying the twist with $x = \prod (x_i)^{-1}$ to $A$ 
gives 
\[
A \cong \bigcap_{i = 1}^m B(G^{(i)}, H^{(i)}, J^{(i)}) \cong  \bigcap_{i = 1}^m B(G^{(i)}, \wh{H}^{(i)},  T),
\] 
as required.  Finally, this ring is graded Morita equivalent to $B(Z, H, T)$ for some $H$, 
by the same argument as in Theorem~\ref{thm:class}.

It remains to verify that a ring of the form $B(Z, H, T)$ is isomorphic to a generalized Weyl algebra 
$T(\sigma, f)$.  Since $T$ is a PID we may write $H = gT$.  Then we claim that 
$B = B(Z, H, T) \cong T(\sigma, f)$ for $f = \sigma^{-1}(g)$.  
By the original definition of Bavula \cite{Bav}, $T(\sigma, f)$ is the algebra generated by the ring $T$ and new indeterminates $x, y$ satisfying the relations $yx = f, xy = \sigma(f), x\alpha = \sigma(\alpha)x$, and $y \alpha = \sigma^{-1}(\alpha) y$ for all $\alpha \in T$.
Mapping $T \langle x, y \rangle$ to $T[t, t^{-1}; \sigma]$ by sending $T$ to itself identically, $x \mapsto t$ and $y \mapsto ft^{-1}$, it is easy to see that these relations are satisfied, so there is a map of graded algebras 
$\phi: T(\sigma; f) \to T[t, t^{-1}; \sigma]$.  The image of this map is the algebra generated by $T$, $ft^{-1}$, and $t$, which is also 
generated by $fT t^{-1} \oplus T \oplus Tt = B_{-1} \oplus B \oplus B_1$, and hence is equal to $B$ since $B$ is generated in these degrees 
by Lemma~\ref{lem:B-props1}.   Finally, $\phi$ must be injective since $T(\sigma, f)$ is simple given that $f$ does not have more than one root on a given $\sigma$-orbit \cite[Corollary 3.2]{Bav}.  This proves that $B(Z, H, T) \cong T(\sigma; f)$ as claimed.
\end{proof}

\section{Lonely subsets}

In this final section, we take a stab at getting a better understanding of which closed subsets $Z$ of an affine variety $X = \spec R$ with wild automorphism $\sigma$ can be $\sigma$-lonely, and 
thus knowing more about what examples actually occur in our main classification result.
We might as well stick to the case of the only examples of finite-type affine varieties with wild automorphisms we know, namely the algebraic groups with translation automorphisms described in Proposition~\ref{rat-prop}.   In the next result, we study $\sigma$-lonely subsets of codimension 1 for these examples.  
\begin{theorem}
\label{thm:lonely}
Let $T$, $X = \spec T$, and $\sigma: T \to T$, be of type (A) or (B) as in Proposition~\ref{rat-prop}.  Suppose that $0 \neq f \in T$ is not a unit and that $Z = V(f)$ is the vanishing set of $f$ in $X$.  
\begin{enumerate}
\item In type (A), where $T = k[x_1, x_2^{\pm 1}, \dots, x_d^{\pm 1}]$ and $\sigma(x_1) = x_1 + 1$, 
$\sigma(x_i) = p_i x_i$ for $i \geq 2$, then $Z$ is $\sigma$-lonely if and only if (after replacing $f$ by a unit multiple) 
we have either (i) $f \in k[x_1]$ and no two roots of $f$ differ by an integer, or else (ii) there is $z = x_2^{i_2} \dots x_d^{i_d}$ for some 
$i_j \in \mb{Z}$ such that $f \in k[z, z^{-1}]$ and no two roots of $f$ (as a polynomial in $z$) have ratio which is a power $\rho^i$ of $\rho = p_2^{i_2} \dots p_d^{i_d}$ for some $i \in \mb{Z}$.

\item In type (B), if we label the variables starting with $x_2$ so that $T = k[x_2^{\pm 1}, \dots, x_d^{\pm 1}]$,  then we can think of this ring as a subring of the one considered in part (1) with $\sigma$ being the restriction.  
then $Z = V(f)$ is $\sigma$-lonely if and only if condition (ii) holds as above.
\end{enumerate}
\end{theorem}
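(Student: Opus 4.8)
The plan is to exploit the fact, recorded in Remark~\ref{rem:wild}, that $X=\spec T$ is a commutative affine algebraic group $G$ and that $\sigma$ is translation by an element $g_0\in G$; by Lemma~\ref{T-lem} this translation is wild. First I would reduce to the case that $f$ is squarefree, since $Z=V(f)$ depends only on $\sqrt{(f)}$, and read the root conditions as conditions on the distinct roots. The \emph{if} directions of both parts are then a bookkeeping computation: if, up to a unit of $T$, $f=\prod_\ell(v-c_\ell)$ with $v=x_1$ in case (i) or $v=z$ a monomial in case (ii), then for $j\neq 0$ one has $\sigma^{j}(V(f))=g_0^j V(f)=\bigcup_\ell V(v-c_\ell^{(j)})$ with $c_\ell^{(j)}=c_\ell+j$ (resp.\ $c_\ell^{(j)}=\rho^{j}c_\ell$, where $\sigma(z)=\rho z$), so $V(f)\cap\sigma^{j}(V(f))=\emptyset$ holds for all $j\neq 0$ exactly when no two distinct roots of $f$ differ by an integer (resp.\ have ratio a power of $\rho$); here one uses that $\rho$ is not a root of unity, which holds because the $p_i$ generate a free abelian group.

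For the \emph{only if} direction, note that $Z$ is $\sigma$-lonely iff $g_0^{j}\notin ZZ^{-1}$ for all $j\neq 0$, where $ZZ^{-1}=\{zz'^{-1}:z,z'\in Z\}$ is the difference set, and that wildness is equivalent to $\{g_0^{j}:j\ge 1\}$ being Zariski dense in $G$ (its closure is nonempty and $\sigma$-stable, hence all of $X$). The crux is to prove: \emph{if $\pi\in T$ is irreducible and $V(\pi)$ is $\sigma$-lonely, then $V(\pi)$ is a coset of a codimension-$1$ connected algebraic subgroup of $G$.} Suppose not; then $V(\pi)$ is not contained in any coset of a proper connected subgroup, so the connected subgroup generated by $V(\pi)z_0^{-1}$ (for $z_0\in V(\pi)$) is all of $G$. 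The fibre of the morphism $V(\pi)\times V(\pi)\to G$, $(z,z')\mapsto zz'^{-1}$, over a point $g$ of the image is isomorphic to $V(\pi)\cap gV(\pi)$, which for $g$ outside the proper closed subgroup $\operatorname{Stab}(V(\pi))$ has the expected dimension $\dim G-2$ (two distinct irreducible hypersurfaces cannot share their top component); since $ZZ^{-1}$ generates $G$ it is not contained in $\operatorname{Stab}(V(\pi))$, so such $g$ occur in the image, and the fibre-dimension theorem gives $\overline{ZZ^{-1}}=G$. Then $ZZ^{-1}$, being constructible and dense, contains a dense open subset, which must meet the dense set $\{g_0^{j}:j\ge 1\}$, producing a $j\neq 0$ with $g_0^{j}\in ZZ^{-1}$ — contradicting $\sigma$-loneliness. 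Hence $V(\pi)$ is contained in, and (having codimension $1$) equals, a coset of a codimension-$1$ connected subgroup $G'$. Such $G'$ are precisely the kernels of the surjective homomorphisms $G\to\mathbb{G}_m$ or $G\to\mathbb{G}_a$; since the characters of $G$ are the monomials $x_2^{a_2}\cdots x_d^{a_d}$, and in type (A) (where $\cha k=0$) there is in addition the homomorphism $x_1\colon G\to\mathbb{G}_a$, the relevant cosets are exactly the sets $V(z-b)$ for a primitive monomial $z$ and $b\in k^{*}$, together with $V(x_1-a)$ in type (A). Hence $\pi$ is, up to a unit of $T$, either $z-b$ or $x_1-a$.

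Finally I would reassemble the factors. Write $f=c\prod_i\pi_i$ squarefree; each $V(\pi_i)\subseteq V(f)$ is $\sigma$-lonely, so each $\pi_i$ is as above. If two of the $\pi_i$ corresponded to cosets of distinct codimension-$1$ connected subgroups $G_1\neq G_2$, then $G_1 G_2=G$ (two distinct codimension-$1$ connected subgroups generate the whole group, as their intersection has codimension $\ge 2$), so every coset of $G_1$ meets every coset of $G_2$; in particular $V(\pi_1)\cap\sigma(V(\pi_2))\neq\emptyset$, contradicting loneliness of $V(f)$. Thus all the $\pi_i$ lie in cosets of a single $G'$, which forces all the monomials $z_i$ to be $z^{\pm 1}$ for one primitive monomial $z$ (or all $\pi_i$ to be $x_1-a_i$ up to units, the two possibilities being mutually exclusive). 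After one unit multiple, $f\in k[z,z^{-1}]$ (resp.\ $f\in k[x_1]$), and running the \emph{if}-direction computation backwards identifies $\sigma$-loneliness of $V(f)$ with the roots lying on distinct $\langle\rho\rangle$-orbits (resp.\ distinct $\mathbb{Z}$-orbits) — exactly alternatives (ii) and (i), proving part (1). Part (2) is the same argument carried out in the torus $G=(k^{*})^{d-1}$, which has no $\mathbb{G}_a$-quotient, so only the $V(z-b)$-cosets occur and only alternative (ii) survives; alternatively one pulls back along the projection $\spec k[x_2^{\pm 1},\dots,x_d^{\pm 1}]\leftarrow\spec k[x_1,x_2^{\pm 1},\dots,x_d^{\pm 1}]$ and quotes part (1) for $f\in k[x_2^{\pm 1},\dots,x_d^{\pm 1}]$. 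The main obstacle is the emphasized claim about irreducible $\sigma$-lonely hypersurfaces: the real content is the dimension count forcing $ZZ^{-1}$ to be dense in $G$, played against the wildness-driven density of the orbit $\{g_0^j\}$; the rest is bookkeeping with the group structure.
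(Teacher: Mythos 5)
Your argument is correct in its essentials, but it is a genuinely different route from the paper's. The paper never invokes the group structure directly: it tensors $T$ with a second copy $C=k[t_1,t_2^{\pm1},\dots,t_d^{\pm1}]$, sets $g=f(x_1+t_1,t_2x_2,\dots,t_dx_d)$, and uses generic freeness together with the density of the $\sigma$-orbit of a point to show $(fB+gB)\cap C\neq 0$; this forces the set of specializations $f(t_1+\alpha_1,\alpha_2t_2,\dots,\alpha_dt_d)$, as $(\alpha_1,\dots,\alpha_d)$ ranges over $V(f)$, to be finite, and then explicit coefficient comparisons (in characteristic $0$ for the $x_1$-part) rule out $f$ involving both $x_1$ and some $x_i$, and force $f$ into $k[x_1]$ or $k[z,z^{-1}]$ for a single monomial $z$. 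You instead prove the structural statement that an irreducible $\sigma$-lonely hypersurface $V(\pi)$ must be a coset of a codimension-one connected subgroup of $G=\mathbb{G}_a\times\mathbb{G}_m^{d-1}$ (resp.\ $\mathbb{G}_m^{d-1}$), via the difference map $V(\pi)\times V(\pi)\to G$: the fibre-dimension theorem plus the bound $\dim\bigl(V(\pi)\cap gV(\pi)\bigr)\leq\dim G-2$ off the stabilizer makes $ZZ^{-1}$ constructible and dense, so it meets the dense orbit $\{g_0^j\}$ in some $j\neq 0$, contradicting loneliness; then the classification of codimension-one connected subgroups (kernel of $x_1$, or kernels of primitive monomial characters) and the observation that factors lying in cosets of two distinct such subgroups cannot coexist in a lonely set yield the two normal forms. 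Both proofs lean on the same engine, namely wildness giving Zariski density of the orbit, but yours is more conceptual and transparently explains \emph{why} the answer is ``cosets of codimension-one subgroups'' (and would adapt to other affine groups with wild translations), while the paper's is more elementary and self-contained, avoiding Chevalley constructibility, fibre dimension, the closed-connected-subgroup-generated-by-an-irreducible-set fact, and the subgroup classification. A few small points in your write-up need polish but are easily repaired: the closure of the forward orbit only satisfies $\sigma(Y)\subseteq Y$, so you need the noetherian stabilization step (or just use the full two-sided orbit, which suffices since $ZZ^{-1}$ is symmetric); the fibre $V(\pi)\cap gV(\pi)$ has dimension \emph{at most} $\dim G-2$ rather than ``expected dimension,'' which is all your inequality uses; and the codimension-one connected subgroups are kernels of \emph{primitive} characters (imprimitive characters have disconnected kernels), which your final list of cosets already reflects.
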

\begin{proof}
Part (2) is an easy consequence of the proof of Part (1), so we concentrate on type (A).  
Note that the units group of $T$ is $\{ \beta x_2^{i_2} \dots x_d^{i_d} | 0 \neq \beta \in k, i_j \in \mb{Z} \}$.
By multiplying by a unit, we may assume that $f$ has the form $f = \sum_{n = 0}^{m} h_n(x_2, \dots, x_d) x_1^n$ for some $ m\ge 0$, 
where $h_m \neq 0$ and $h_m \in k[x_2, \dots, x_d]$ 
and has constant term $1$.

Assume that $Z$ is  $\sigma$-lonely; equivalently, that $Tf+T\sigma^n(f)=T$ for every $n \neq 0$.
Let $C = k[t_1, t_2^{\pm 1}, \ldots, t_d^{\pm 1}]$ and let 
$B = T \otimes_k C = k[ x_1, x_2^{\pm 1},\ldots ,x_d^{\pm 1}, t_1, t_2^{\pm 1}, \ldots ,t_d^{\pm 1}]$.
Let $g=f(x_1 + t_1, t_2 x_2,\ldots ,t_dx_d)\in B$.  Consider the ring $\overline{B} = B/(f, g)$.

We claim that $(fB+gB)\cap C \neq 0$.  
Suppose this does not hold.  Then $C$ embeds in $\overline{B}$ and we identify it with its image.
By generic freeness \cite[Theorem 14.4]{Ei}, there is a nonzero polynomial $q\in C$ 
such that $\overline{B}_q$ is a free $C_q$-module.
The orbit of the point $(0, 1, 1, \dots, 1) \in X$ under the automorphism $\sigma$ 
is the set $\{ (n, p_2^n, p_3^n, \dots, p_d^n) | n \in \mb{Z} \}$, and we know this is dense in $X$ 
since $\sigma$ is wild by Lemma~\ref{T-lem}(1).  Thus 
there is an infinite set $S$ of natural numbers $n$ such that $q(n, p_2^n,\ldots ,p_d^n) \neq 0$.  Then for $n\in S$,
the elements $t_1-n, t_2 - p_2^n, \ldots ,t_d-p_d^n$ generate a proper ideal $I$ of $C_q$.  Since
$\overline{B}_q$ is a free $C_q$-module, we see that $I$ lifts to a proper ideal of
$\overline{B}_q$.  In other words, 
$$fB+gB+(t_1-n)B+(t_2 - p_2^n)B + \cdots +(t_d-p_d^n)B \neq B$$ for $n\in S$.
Notice, however, that
$$B/(f,g, t_1 - n, t_2-p_2^n,\ldots ,t_d-p_d^n) \cong T/(f, \sigma^n(f)),$$ which contradicts the fact that $f$ and $\sigma^n(f)$ generate the unit ideal in $T$.  This proves the claim.

Now there exist $a,b\in B$ and nonzero $c(t_1,\ldots ,t_d)\in C$ such that 
$$af+bg=c(t_1,\ldots ,t_d).$$
For $(\alpha_1,\ldots ,\alpha_d) \in Z = V(f)$ we have
$$g(\alpha_1, \dots, \alpha_d, t_1, \dots, t_d)  = f(t_1 + \alpha_1, \alpha_2 t_2,\ldots ,\alpha_d t_d) \big| c(t_1,\ldots ,t_d)$$ 
in $C$.  
Up to multiplication by units, $c(t_1,\ldots ,t_d)$ has only a finite set of divisors in $C$, say $\{c_1,\ldots ,c_m\}$.
By the choice of $f = \sum_{n = 0}^{m} h_n x_1^n$, where $h_m \in k[x_2, \dots, x_n]$ has constant term $1$, 
we see that 
$f(t_1 + \alpha_1, \alpha_2 t_2, \ldots, \alpha_d t_d)$ is of the form 
$\sum_{n = 0}^m h'_n(t_2, \dots, t_d) t_1^n$ where $h'_m \in k[t_2, \dots, t_n]$ still has constant term $1$.
Note that for each $c_i$, there is a finite set of units $u$ in the units group 
$\{ \beta t_2^{i_2} \dots t_d^{i_d} \}$ of $C$ such that $u c_i$ has constant term $1$.
Thus we see that 
$$S = \{f(t_1 + \alpha_1, \alpha_2 t_2,\ldots ,\alpha_d t_d)~|~(\alpha_1,\ldots ,\alpha_d)\in Z = V(f)\}$$ 
is a finite set.

Suppose that $f$ involves both $x_1$ and some $x_i$ with $i \geq 1$ (so in particular, $m \geq 1$ and $d \geq 2$).  
We will show this leads to a contradiction.
By reordering the $x_i$ with $i \geq 2$ we can assume that if $k$ is the largest integer such that $h_k$ is not a scalar, 
then $x_2$ occurs in $h_k$.   Consider the projection morphism  
$X \to \spec k[x_3^{\pm 1}, \dots, x_d^{\pm 1}]$, and let $\phi: Z(f) \to \spec k[x_3^{\pm 1}, \dots, x_d^{\pm 1}]$
be its restriction.  Since $Z(f)$ is $(d-1)$-dimensional, we can find a fiber of $\phi$, say  $\phi^{-1}(\alpha_3, \dots, \alpha_d)$, which is of dimension $\geq 1$ \cite[Exercise II.3.22]{Ha}.
Since $k$ is algebraically closed and in particular infinite, there is an infinite set $P$ of pairs $(\alpha, \beta)$ such that 
$(\alpha, \beta, \alpha_3, \dots, \alpha_d) \in Z(f)$ and 
the elements $f(t_1 + \alpha, \beta t_2, \alpha_3 t_3, \dots, \alpha_d t_d)$ are all equal.
Given any two distinct $(\alpha, \beta), (\gamma, \delta) \in P$, looking at the coefficient of $t_1^m$ implies that 
$h_m(\beta t_2, \alpha_3 t_3, \dots, \alpha_d t_d) = h_m(\delta t_2, \alpha_3 t_3, \dots, \alpha_d t_d)$; call this element $H$, and 
note that the constant term of $H$ is equal to $1$. 
Then looking at the coefficient of $t_1^{m-1}$ gives 
\[
H m \alpha + h_{m-1}(\beta t_2, \alpha_3 t_3, \dots, \alpha_d t_d) = H m \gamma + h_{m-1}(\delta t_2, \alpha_3 t_3, \dots, \alpha_d t_d).
\]
Looking at the constant term on both sides now shows since $k$ has characteristic $0$ that $\alpha = \gamma$.  
Next, let $k$ be the largest integer such that $h_k$ is non-scalar, and recall that $x_2$ occurs in $h_k$ by assumption.
Then looking at the coefficient of $t_1^k$ and using that $\alpha = \gamma$ 
we see that $M + h_k(\beta t_2, \alpha_3 t_3, \dots, \alpha_d t_d) = M + h_k(\delta t_2, \alpha_3 t_3, \dots, \alpha_d t_d)$ 
for some constant $M$.  Now looking at the term of highest degree in $t_2$, say degree $p$, we conclude that $\beta^p = \delta^p$.  In particular, $\beta = \zeta \delta$ for some $p$th root of unity $\zeta$.  
We conclude that $P \subseteq \{ (\alpha, \zeta \beta) | \zeta^p = 1 \}$ for some fixed $\alpha, \beta, p$, and so 
$P$ is finite, a contradiction.

Thus we can assume now that either $f \in k[x_1]$ or $f \in k[x_2, \dots, x_d]$.  The first case is easy to dispatch.
If $f \in k[x_1]$ then $f T + \sigma^n(f) T$ is the unit ideal if and only if $f k[x_1] + \sigma^n(f) k[x_1]$ is the unit 
ideal in $k[x_1]$.  This will occur for all $n \neq 0$ if and only if $f$ does not have two distinct roots on the same $\sigma$-orbit, 
in other words no two roots of $f$ differ by a nonzero integer.  

The other case is $f \in k[x_2^{\pm 1}, \dots, x_d^{\pm 1}]$, and this is handled by a similar argument as the one we already used.  
The argument in this paragraph also shows how to prove part (2) of the theorem; since case (B) allows arbitrary characteristic, we note that 
no assumption on the characteristic is necessary for this part of the argument.   We claim that there is $z = x_2^{i_2} \dots x_d^{i_d}$ such that $f \in k[z, z^{-1}]$.    We may assume that $d \geq 3$, since otherwise the claim is trivial.
Write $f(t_2,\ldots ,t_d)=\sum \beta_{i_2,\ldots ,i_d} t_2^{i_1}\cdots t_d^{i_d}$, where by the 
earlier normalization, $\beta_{0,\ldots ,0}=1$. 
Suppose there exist $(i_2,\ldots ,i_d)$ and $(j_2,\ldots ,j_d)$ in $\mathbb{Z}^{d-1} 
\setminus \{(0,0,\ldots ,0)\}$ such that $\beta_{i_2, \ldots, i_d} \neq 0$ and $\beta_{j_2, \ldots, j_d} \neq 0$, 
and $(i_2,\ldots ,i_d)$ is not a rational scalar multiple of $(j_2,\ldots ,j_d)$.   Then by relabeling our variables if necessary, we may assume that $i_2j_3 \neq j_2 i_3$.   Consider the projection $\spec k[x_2^{\pm 1}, x_3^{\pm 1}, \dots, x_d^{\pm 1}] \to \spec k[x_4^{\pm 1}, \dots, x_d^{\pm 1}]$
and restrict this to $Z(f)$ to obtain $\phi: Z(f) \to  \spec k[x_4^{\pm 1}, \dots, x_d^{\pm 1}]$.  Similarly as above, 
we can pick $(\alpha_4,\ldots ,\alpha_d) \in (k\setminus \{0\})^{d-3} $ such that $\phi^{-1}(\alpha_4,\ldots ,\alpha_d)$ is at least $1$-dimensional, 
and so  there is an infinite set $P$ of ordered pairs $(\alpha,\beta)\in (k\setminus\{0\})^2$ such that 
$f (\alpha t_2,\beta t_3 ,\alpha_4 t_4,\ldots ,\alpha_d t_d)$ is the same polynomial for every $(\alpha,\beta) \in P$.  
Now if $(\alpha,\beta)$ and $(\gamma,\delta)$ are in $P$, then 
we have $\alpha^{i_2}\beta^{i_3}=\gamma^{i_2}\delta^{i_3}$ and
$\alpha^{j_2}\beta^{j_3}=\gamma^{j_2}\delta^{j_3}$.   
Then $(\alpha/\gamma)^{i_2} = (\delta/\beta)^{i_3}$ and $(\alpha/\gamma)^{j_2} = (\delta/\beta)^{j_3}$, 
so there are roots of unity $\omega$ and $\omega'$ such that $\gamma=\omega\alpha$ and $\delta=\omega'\beta$ with $\omega^n=(\omega')^n=1$, where $n=i_2j_3-i_3j_2 \neq 0$.  Then we see that
$$ P \subseteq \{(\alpha \omega, \beta \omega')~:~\omega^n=(\omega')^n=1\},$$ 
for some fixed pair $(\alpha, \beta)$, contradicting the fact that $P$ is infinite.  
We conclude that all $(j_2, \dots, j_d)$ such that $\beta_{(j_2, \dots, j_d)} \neq 0$ lie on a line through the origin, 
and so there is some nonzero $d$-tuple $(j_2,\ldots ,j_d)\in \mathbb{Z}^d$ such that 
setting $z = x_2^{j_2} \dots x_d^{j_d}$, then $f \in k[z, z^{-1}]$.  We may assume further that $\gcd(j_2, \dots, j_d) = 1$.

Finally, since $\sigma(z) = \rho z$ where $\rho = p_2^{j_2} \dots p_d^{j_d}$, we have reduced again to a one-variable 
case.  Clearly $f$ will be $\sigma$-lonely if and only if $f = f(z)$ does not have two distinct roots on any 
$\sigma$-orbit, that is, no two distinct roots of $f$ have ratio equal to $\rho^i$ for some $i \in \mb{Z}$.
\end{proof}

\begin{example}
Suppose we are in type (A) or (B) of Proposition~\ref{rat-prop}.  If $T$ has dimension $\leq 2$, then we can describe all $\sigma$-lonely 
subsets $Z$ of $X = \spec T$.  In case $\dim T = 1$ this is rather trivial: $Z$ is lonely if and only if no two of its distinct points lie on the same $\sigma$-orbit.  In case $\dim T = 2$, then as $T$ is a UFD, $Z$ is a disjoint union  
$Z = V(f) \cup W$ for some nonzero nonunit $f \in T$ and some finite set of points $W$.  It is easy to see that $Z$ will be lonely if and only if $V(f)$ 
satisfies the conclusion of Theorem~\ref{thm:lonely}, and each point of $W$ lies on a distinct $\sigma$-orbit disjoint from the $\sigma$-iterates of $V(f)$.
\end{example}

We close with the following open question. 
\begin{question}
Is there a simple classification of the $\sigma$-lonely subsets of $X = \spec T$ for $(T, \sigma)$ as in Proposition~\ref{rat-prop} with 
$T$ of arbitrary dimension?  
\end{question}

\providecommand{\bysame}{\leavevmode\hbox
to3em{\hrulefill}\thinspace}
\providecommand{\MR}{\relax\ifhmode\unskip\space\fi MR }
\providecommand{\MRhref}[2]{%
  \href{http://www.ams.org/mathscinet-getitem?mr=#1}{#2}
} \providecommand{\href}[2]{#2}

\end{document}